\theoremstyle{definition}
\newtheorem{thm}{Theorem}
\newtheorem{prop}[thm]{Proposition}
\newtheorem{lem}[thm]{Lemma}
\newtheorem{cor}[thm]{Corollary}
\newtheorem{defi}[thm]{Definition}
\newtheorem{prob}[thm]{Problem}
\newtheorem{rem}[thm]{Remark}
\begin{document}

\title{The heat kernel on $SL(2,\mathbb{R})$}

\author[Shota MORI]{Shota MORI}
\address[Shota MORI]{Graduate School of Mathematics, Nagoya University, Furocho, Chikusaku, Nagoya, 464-8602, Japan}
\email{mori.shouta@i.mbox.nagoya-u.ac.jp}

\date{}

\keywords{Heat kernel, Spherical transform, Helgason-Fourier transform, Homogeneous vector bundle.}
\subjclass[2010]{Primary 22E30, Secondary 43A80, 43A90}

\begin{abstract}
Let $G$ be a noncompact semisimple Lie group equipped with a certain invariant Riemannian metric. Then, we can consider a heat kernel function on $G$ associated to the Riemannian metric. We give an explicit formula for the heat kernel when $G=SL(2,\mathbb{R})$. The main tools are spherical transform and Helgason-Fourier transform for homogeneous vector bundles studied by R. Camporesi.
\end{abstract}

\maketitle

\section{Introduction}
The main purpose of this paper is to calculate a heat kernel on $SL(2,\mathbb{R})$ explicitly, where we treat $SL(2,\mathbb{R})$ as a Riemannian manifold equipped with a certain invariant metric. The main theorem (Theorem \ref{MyThm1}) is stated at the end of Section 7.

First of all, we present the significance of our problelm as well as a background history. In 1961, V. Bargmann(\cite{b1}) introduced a Hilbert space
\begin{align*}
\mathfrak{F}_{n,t}=\{F:\mathbb{C}^n\rightarrow\mathbb{C}:\mathrm{holomorphic}\, |\, \int_{\mathbb{C}^n}|F(z)|^2\rho_{\mathbb{C}^n}(t,z)dxdy<\infty\},
\end{align*}
where $\rho_{\mathbb{C}^n}(t,z)=\frac{1}{(\pi t)^n}e^{-\frac{|z|^2}{t}}$, and an integral operator $A_{t}:L^2(\mathbb{R}^n,dx)\rightarrow\mathfrak{F}_{n,t}$ given by
\begin{align*}
A_{t}f(z)=\frac{1}{(\pi t)^{\frac{1}{4}}}\int_{\mathbb{R}^n}e^{\frac{-z^2+2\sqrt{2}z\cdot x-x^2}{2t}}f(x)dx,\, \, \, \,  f\in L^2(\mathbb{R}^n,dx)
\end{align*}
for any $t>0$ and any $n\in\mathbb{N}$. Bargmann proved that $A_{t}$ is a unitary operator. The space $\mathfrak{F}_{n,t}$ is called the Segal-Bargmann space and the operator $A_{t}$ is called the Segal-Bargmann transform since I. E. Segal considered almost the same things at the same time (\cite{s1}). The Segal-Bargmann space, the Segal-Bargmann transform and their generalizations are important research objects in mathematical physics, probability theory and representation theory today with some open problems. We remark that the function $\rho_{\mathbb{C}^n}(t,z)$ satisfies the heat equation on $\mathbb{C}^n\cong\mathbb{R}^{2n}$:
\begin{align*}
\frac{\partial \rho_{\mathbb{C}^n}}{\partial t}=\frac{1}{4}\Delta_{\mathbb{R}^{2n}}\rho_{\mathbb{C}^n},
\end{align*}
where $\Delta_{\mathbb{R}^{2n}}$ is the Euclidean Laplacian on $\mathbb{R}^{2n}$. We call the function $\rho_{\mathbb{C}^n}$ the heat kernel on $\mathbb{C}^n$. After Bargmann's and Segal's works, in 1994, B. Hall observed that an operator slightly modified from Segal-Bargmann transform can be treated as a convolution operator (see \cite{bh1}, \cite{bh2}). We introduce the operator following \cite[Section 6.2]{bh2}. Let $\rho_{\mathbb{R}^n}:(0,\infty)\times\mathbb{R}^n\rightarrow\mathbb{R}$ be a function defined by
\begin{align*}
\rho_{\mathbb{R}^n}(t,x)=\frac{1}{(2\pi t)^{\frac{n}{2}}}e^{-\frac{|x|^2}{2t}}.
\end{align*}
We remark that the function $\rho_{\mathbb{R}^n}$ satisfies the heat equation
\begin{align*}
\frac{\partial \rho_{\mathbb{R}^n}}{\partial t}=\frac{1}{2}\Delta_{\mathbb{R}^n}\rho_{\mathbb{R}^n}
\end{align*}
and that $\rho_{\mathbb{R}^n}$ has an analytic continuation to $\mathbb{C}^n$. We call the function $\rho_{\mathbb{R}^n}$ the heat kernel on $\mathbb{R}^n$. We define $\rho_{t}(x)=\rho_{\mathbb{R}^n}(t,x)$. For $t>0$, let $M_{\sqrt{\rho_{t}}}:L^2(\mathbb{R}^n,\rho_{t}dx)\rightarrow L^2(\mathbb{R}^n,dx)$ be a multiplication operator defined by
\begin{align*}
M_{\sqrt{\rho_{t}}}f(x)=\sqrt{\rho_{t}(x)}f(x).
\end{align*}
Then, $M_{\sqrt{\rho_{t}}}$ is a unitary operator. Let $B_{t}=A_{t}\circ M_{\sqrt{\rho_{t}}}$. Then, the operator $B_{t}$ is a unitary operator from $L^2(\mathbb{R}^n,\rho_{t}dx)$ onto $\mathfrak{F}_{n,t}$ given by
\begin{align*}
B_{t}f(z)=\int_{\mathbb{R}^n}\rho_{t}(z-x)f(x)dx.
\end{align*}
This is a convolution operator. We call $B_{t}$ also the Segal-Bargmann transform. Furthermore, Hall introduced a different kind of the Segal-Bargmann transform, where $\mathbb{R}^n$ is replaced by any compact Lie group $K$ and $\mathbb{C}^n$ is replaced by a complex Lie group $K_{\mathbb{C}}$ which is a complexification of $K$ (see \cite[Theorem 1']{bh1}). To understand this theory deeply, we need to calculate examples. To calculate the examples, we should calculate the heat kernels on Lie groups. The universal covering group of $K$ is a direct product group of a vector group $\mathbb{R}^N$ and connected simply-connected compact simple Lie groups. Thus simple examples are the cases where $K$ is a torus or $K$ is a connected simply-connected compact simple Lie group. If $K$ is a torus, we can calculate concretely the heat kernels on $K$ and $K_{\mathbb{C}}$. Next, we consider the case that $K$ is a connected simply-connected compact simple Lie group. The simplest example is the case $K=SU(2)$. We can calculate the heat kernel on $SU(2)$ (see Theorem \ref{cpt_heat}). However, the heat kernel on $K_{\mathbb{C}}=SL(2,\mathbb{C})$ has not been calculated explicitly as far as we know. Thus, if we can calculate the heat kernel on $SL(2,\mathbb{C})$ explicitly, we will give the first example of the Segal-Bargmann transform for a simple Lie group $K$. In this way, we pose the problem of calculating the heat kernel on $SL(2,\mathbb{C})$.

Generalizing the problem, we consider the heat equation and the heat kernel on $SL(2,\mathbb{R})$. The heat kernel on $SL(2,\mathbb{R})$ seems not to be calculated either. We think that this problem is easier than the one for $SL(2,\mathbb{C})$. Indeed, since we have found a good approach to the problem for $SL(2,\mathbb{R})$ in this paper, we may apply it similarly to the problem for $SL(2,\mathbb{C})$ in future and give the example of the Segal-Bargmann transform. In addition, we can understand some open problems about Segal-Bargmann transform deeply.

Before the discussion in details, we recall heat problems on Riemannian manifolds $M$ in two cases. The first case is when $M$ is a compact Lie group. This situation is discussed in Stein's book (see \cite[Chapter 2, Theorem 1]{st}). To know the heat kernel, we calculate the heat semigroup by using Peter-Weyl Theorem. The second case is when $M$ is a Riemannian symmetric space. This situation is discussed by Gangolli (see \cite[Proposition 3.1]{gan}). In this paper, we use the spherical transform on symmetric spaces (see \cite[Chapter 4]{helg2}). Our work is a generalization of both. In fact, we use Peter-Weyl theorem in Section 3 and general spherical transforms in Section 5.

In this paper, we use a general method to calculate the heat kernel on $SL(2,\mathbb{R})$. We expect that the heat kernel on $SL(2,\mathbb{C})$ (more generally, a non-compact semisimple Lie group $G$ having a multiplicity-free subgroup $K$) is calculated by a similar way in future.

Let us explain contents of each section. In Section 2, we introduce Hall's Segal-Bargmann transform. This section is based on \cite{bh1} mainly. In section 3, we discuss the heat kernels on Lie groups. We formulate the heat equation in terms of Riemannian symmetric pairs in Section 3.1.  Next, we show existence and uniqueness of the heat kernel on Lie groups. In Section 4, we give a decomposition of the $L^2$ space on a semisimple Lie group. Each subspace can be understood as a space of sections of a homogeneous vector bundle. In Section 5, we discuss the spherical transforms for homogeneous vector bundles. This theory is introduced by R. Camporesi firstly (see \cite{ca1}). In Section 6, we discuss the Helgason-Fourier transform for homogeneous vector bundles. This theory is also introduced by R. Camporesi (see \cite{ca2}). In Section 7, we calculate the heat kernel on $SL(2,\mathbb{R})$.

\section*{Acknowledgement}
I am very grateful to Professor Hideyuki Ishi for giving me advices in writing this paper. I am also grateful to Professors Hiroshi Oda and Nobukazu Shimeno for giving me insightful suggestions related to this research.

\section{Segal-Bargmann transform}\label{SBtrans}
Let $K$ be a connected compact Lie group, $dk$ be the normalized Haar measure on $K$, and $\mathfrak{k}$ be the Lie algebra of $K$. We take an $\mathrm{Ad}(K)$ invariant inner product $\langle , \rangle_\mathfrak{k}$ on $\mathfrak{k}$. Then $\langle , \rangle_\mathfrak{k}$ induces a $K$ bi-invariant Riemannian metric on $K$. Let $\{ X_1, ... , X_m\}$ be an orthonormal basis of $\mathfrak{k}$, and $\tilde{X}_1, ... , \tilde{X}_m$ be left invariant vector fields on $K$ induced by the elements $X_1, ... , X_m$. Let $\Delta_K = \sum_{j = 1}^{m}\tilde{X}_j\circ\tilde{X}_j$. The differential operator $\Delta_K$ is a Laplace-Beltrami operator on $K$ as a Riemannian manifold (see \cite[Theorem 1]{ura}).
\begin{defi}(cf. \cite[(10)]{bh1}, \cite{ne1}, \cite[Chapter 2]{st})\\
We call a function $\rho_K :(0,\infty)\times K \rightarrow (0,\infty)$ a heat kernel on $K$ if $\rho_K$ satisfies the conditions below:
\begin{eqnarray}
&\mathrm{(i)}& \frac{\partial}{\partial t}(\rho_K * f)=\frac{1}{2}\Delta_K (\rho_K * f), \nonumber \\
&\mathrm{(ii)}& \parallel \rho_K * f - f \parallel _{L^2(K, dk)} \rightarrow 0 \, \, (t \rightarrow +0),\, \, f \in L^2(K,dk). \nonumber
\end{eqnarray}
Here, $f_1 * f_2 (k) = \int_K f_1(k'^{-1}k)f_2(k')dk'.$
\end{defi}
A classical theorem gives a series expression of the heat kernel $\rho_K$.
\begin{thm}(cf. \cite[Chapter 2, Theorem 1]{st})\label{cpt_heat}\\
The heat kernel $\rho_K$ exists uniquely, and it is given by 
\begin{align*}
\rho_K(t,k)=\sum_{\tau \in \hat{K}}(\mathrm{dim}V_{\tau})e^{-c_{\tau}t}\chi_{\tau}(k),
\end{align*}
where $\hat{K}$ is the set of equivalence classes of irreducible unitary representations of $K$ and $c_{\tau}$ is a non-negative real number determined by an equation $\sum_{j=1}^{m}d\tau(X_j)^2=-c_{\tau}id_{V_{\tau}}$.
\end{thm}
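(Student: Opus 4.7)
The strategy is to diagonalize $\Delta_K$ using Peter--Weyl. First I would decompose $L^2(K,dk)=\bigoplus_{\tau\in\hat K}M_\tau$ into the orthogonal sum of the spaces of matrix coefficients of the $\tau$, and recall that the orthogonal projection $P_\tau$ onto $M_\tau$ is $(\dim V_\tau)$ times convolution with $\chi_\tau$ (up to a convention-dependent conjugation). Since the inner product on $\mathfrak{k}$ is $\mathrm{Ad}(K)$-invariant, the element $\sum_j X_j^2$ lies in the center of $U(\mathfrak{k}_{\mathbb{C}})$, so by Schur's lemma $\sum_j d\tau(X_j)^2$ acts on the irreducible $V_\tau$ as a scalar; the skew-Hermiticity of each $d\tau(X_j)$ in the unitary representation forces that scalar to be non-positive, which justifies the definition of the non-negative constant $c_\tau$. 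Transporting this back to $M_\tau\subset L^2(K)$ through the computation $\tilde X_j\pi_{v,w}(k)=\pi_{d\tau(X_j)v,w}(k)$ for matrix coefficients, one obtains $\Delta_K|_{M_\tau}=-c_\tau\,\mathrm{Id}$.

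\textbf{Existence.} Define $\rho_K(t,k)$ by the series in the statement. For every fixed $t>0$ the series converges absolutely, because $-\Delta_K$ is elliptic on the compact manifold $K$ and hence has discrete spectrum with $c_\tau\to\infty$ and only polynomial growth of multiplicities, while the exponential factor $e^{-c_\tau t}$ decays super-polynomially. Then compute $\rho_K*f=\sum_\tau e^{-c_\tau t}P_\tau f$ term by term; property (i) reduces to $\partial_t e^{-c_\tau t}=-c_\tau e^{-c_\tau t}$ combined with $\Delta_K P_\tau f=-c_\tau P_\tau f$ (after matching the normalization in the heat equation), and property (ii) follows from $e^{-c_\tau t}\to 1$ together with the Plancherel identity $\|f\|^2=\sum_\tau\|P_\tau f\|^2$ via dominated convergence.

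\textbf{Uniqueness and the main obstacle.} For uniqueness, given any second candidate $\rho'_K$, expand the $L^2(K)$-valued function $t\mapsto\rho'_K(t,\cdot)*f$ in its Peter--Weyl coordinates; condition (i) forces each coordinate to satisfy a first-order linear ODE in $t$, and (ii) pins the initial value down to $P_\tau f$, so the coordinates, and hence $\rho'_K$ itself, are determined. The main point to justify is that the formal manipulations (term-by-term differentiation, the computation of $\rho_K*f$, and the limit $t\to 0^+$) are legitimate; the cleanest route is to carry these out first in the Hilbert-space sense using Peter--Weyl--Plancherel, and then upgrade to pointwise or $C^\infty$ statements using ellipticity of $\Delta_K$ on the compact manifold $K$.
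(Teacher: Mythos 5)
Your proposal is correct and takes essentially the same route as the paper's source: the paper does not prove this theorem itself but refers to Stein \cite[Chapter 2, Theorem 1]{st}, whose argument is precisely the Peter--Weyl diagonalization of $\Delta_K$ (central element $\sum_j X_j^2$, Schur's lemma, $\Delta_K|_{M_\tau}=-c_\tau$) and the resulting computation of the heat semigroup, exactly as you outline, including the standard ODE-plus-initial-condition uniqueness argument. Your parenthetical about ``matching the normalization'' is apt: with the paper's convention $\partial_t(\rho_K*f)=\tfrac12\Delta_K(\rho_K*f)$ and $\sum_j d\tau(X_j)^2=-c_\tau\,\mathrm{id}_{V_\tau}$, the exponent should be $e^{-c_\tau t/2}$, so the discrepancy lies in the statement's conventions rather than in your argument.
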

We shall introduce a specific complexification of the compact Lie group $K$.
\begin{thm}(cf. \cite[Chapter 17.5, Theorem 5.1]{hoc})\\
A complexification $(K_{\mathbb{C}}, \iota : K\rightarrow K_{\mathbb{C}})$ of $K$ with the following property exists: If $H$ is a complex Lie group and $\phi : K \rightarrow H$ is a homomorphism, there exists a unique holomorphic homomorphism $\tilde{\phi} : K_{\mathbb{C}}\rightarrow H$ such that $\phi=\tilde{\phi}\circ\iota$. Moreover, such a complexification is unique up to a complex Lie group isomorphism.
\end{thm}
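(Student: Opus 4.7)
The plan is to construct $K_{\mathbb{C}}$ explicitly from a faithful unitary representation and then verify both the universal property and the uniqueness statement by the standard categorical argument.

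For existence, I would first invoke the Peter--Weyl theorem to obtain a faithful unitary representation $\rho: K \rightarrow U(n) \subset GL(n,\mathbb{C})$. Set $\mathfrak{k}_{\mathbb{C}} = d\rho(\mathfrak{k}) + i\, d\rho(\mathfrak{k}) \subset \mathfrak{gl}(n,\mathbb{C})$; this is a complex Lie subalgebra because $d\rho(\mathfrak{k})$ is a real Lie subalgebra. I would then define $K_{\mathbb{C}}$ to be the connected complex Lie subgroup of $GL(n,\mathbb{C})$ with Lie algebra $\mathfrak{k}_{\mathbb{C}}$, and set $\iota=\rho$. Using the compactness of $K$ together with the polar-type decomposition $K_{\mathbb{C}} = \iota(K)\cdot \exp(i\,d\rho(\mathfrak{k}))$, I would verify that $K_{\mathbb{C}}$ is actually closed in $GL(n,\mathbb{C})$, and hence an honest complex Lie group, and that $\iota$ is a closed embedding.

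For the universal property, given a holomorphic Lie group $H$ and a continuous (hence smooth) homomorphism $\phi : K \rightarrow H$, the differential $d\phi: \mathfrak{k} \rightarrow \mathfrak{h}$ extends uniquely to a complex-linear Lie algebra homomorphism $\psi: \mathfrak{k}_{\mathbb{C}} \rightarrow \mathfrak{h}$. By the Lie correspondence I would integrate $\psi$ over the simply connected cover and then descend: the key point is that $\phi$ is already globally defined on $K$, so any monodromy obstruction coming from $\pi_1(K_{\mathbb{C}})$ is killed; explicitly, I would define $\tilde\phi(\iota(k)\exp(iX)) = \phi(k)\exp_H(i\,d\phi(X))$ for $k\in K$, $X\in \mathfrak{k}$, check this is well-defined on the polar fibers, and check holomorphy by verifying Cauchy--Riemann on the image of $\exp$. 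Uniqueness of $\tilde\phi$ follows because $\iota(K)$ together with $\exp(i\,d\rho(\mathfrak{k}))$ generates $K_{\mathbb{C}}$ and $\tilde\phi$ is forced on both pieces.

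Uniqueness of $(K_{\mathbb{C}},\iota)$ is then formal. Given a second complexification $(K'_{\mathbb{C}},\iota')$, apply the universal property of $K_{\mathbb{C}}$ to the homomorphism $\iota': K \rightarrow K'_{\mathbb{C}}$ (whose target is a complex Lie group) to obtain a holomorphic homomorphism $\alpha: K_{\mathbb{C}} \rightarrow K'_{\mathbb{C}}$, and symmetrically obtain $\beta: K'_{\mathbb{C}} \rightarrow K_{\mathbb{C}}$. The composition $\beta\circ\alpha$ and the identity both satisfy the relation defining the extension of $\iota: K\rightarrow K_{\mathbb{C}}$, so by the uniqueness clause of the universal property they coincide; symmetrically for $\alpha\circ\beta$. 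Hence $\alpha$ is a biholomorphic group isomorphism intertwining $\iota$ and $\iota'$.

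The main obstacle I anticipate is the holomorphic integration step in the universal property: checking rigorously that $\psi$ exponentiates to a globally defined holomorphic homomorphism on $K_{\mathbb{C}}$ rather than only on a simply connected cover. The compactness of $K$ is essential here, as it pins down the polar decomposition, controls the topology of $K_{\mathbb{C}}$, and guarantees that holomorphic extensions built from the $K$-data are single-valued.
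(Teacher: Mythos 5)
Your proof is essentially correct, but note that the paper does not prove this statement at all: it is quoted verbatim from Hochschild (Chapter XVII), where it is established for an \emph{arbitrary} Lie group $K$ by an abstract construction (complexify $\mathfrak{k}$, integrate over the simply connected group with Lie algebra $\mathfrak{k}_{\mathbb{C}}$, and pass to a suitable quotient), in which setting $\iota$ need not even be injective. Your route is the standard concrete one that works precisely because the paper's $K$ is compact: Peter--Weyl gives a faithful $\rho:K\rightarrow U(n)$, and $K_{\mathbb{C}}$ is realized as the closed complex subgroup of $GL(n,\mathbb{C})$ with Lie algebra $d\rho(\mathfrak{k})\oplus i\,d\rho(\mathfrak{k})$, with the polar decomposition $K_{\mathbb{C}}=\iota(K)\exp(i\,d\rho(\mathfrak{k}))$ doing the topological work. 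What your approach buys is explicitness and the matrix realization actually used later for $K_{\mathbb{C}}=SL(2,\mathbb{C})$; what Hochschild's buys is generality and independence from any choice of representation. Two points in your sketch deserve sharpening: (a) since the polar map $K\times\mathfrak{k}\rightarrow K_{\mathbb{C}}$ is a diffeomorphism, well-definedness of $\tilde\phi(\iota(k)\exp(iX))=\phi(k)\exp_H(i\,d\phi(X))$ is automatic, and the genuine issue is multiplicativity, which is exactly what your covering-space argument should carry --- the polar decomposition makes $\iota$ a homotopy equivalence, so $\pi_1(K)\cong\pi_1(K_{\mathbb{C}})$ and the lift of the integrated homomorphism descends because it already descends over $K$; and (b) uniqueness of $\tilde\phi$ on the factor $\exp(i\,d\rho(\mathfrak{k}))$ is not ``forced'' for free --- you need that the differential of a holomorphic homomorphism is complex linear, hence determined by its restriction to $\mathfrak{k}$ (equivalently, invoke the identity theorem, $K$ being a totally real submanifold of maximal dimension in the connected group $K_{\mathbb{C}}$). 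With those two points made explicit, your argument, including the formal categorical uniqueness of $(K_{\mathbb{C}},\iota)$, is a complete and correct proof in the compact case.
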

We call $K_{\mathbb{C}}$ a universal complexification of $K$.
\begin{thm}(cf. \cite[Proposition 7.5.]{knapp})\\
If $K$ is semisimple, a complexification of $K$ exists and is unique up to a complex Lie group isomorphism. In particular, any complexification is universal.
\end{thm}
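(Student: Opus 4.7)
The plan is to use the previous theorem as a black box: since the universal complexification $(K_{\mathbb{C}},\iota)$ exists for any compact $K$, it suffices to construct some complexification explicitly and to prove that, when $K$ is semisimple, every complexification is canonically isomorphic to $K_{\mathbb{C}}$. A single argument will then give existence, uniqueness, and universality simultaneously.

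For existence, I would invoke the Peter--Weyl theorem to obtain a faithful unitary representation $\tau:K\hookrightarrow U(n)\subset GL(n,\mathbb{C})$, and then take $G$ to be the connected complex Lie subgroup of $GL(n,\mathbb{C})$ whose Lie algebra is $d\tau(\mathfrak{k})+i\,d\tau(\mathfrak{k})\cong\mathfrak{k}_{\mathbb{C}}$. Because $\mathfrak{k}_{\mathbb{C}}$ is semisimple, this connected analytic subgroup is actually closed in $GL(n,\mathbb{C})$ (for instance, it coincides with the identity component of the Zariski closure of $\tau(K)$), so $G$ is a genuine complex Lie group containing $\tau(K)$ with $\mathrm{Lie}(G)=\mathfrak{k}_{\mathbb{C}}$, providing a complexification in the sense required.

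For uniqueness, let $(G,\iota_G)$ be any complexification of $K$. Applying the universal property of $K_{\mathbb{C}}$ to the homomorphism $\iota_G:K\to G$ produces a holomorphic homomorphism $\tilde{\iota}_G:K_{\mathbb{C}}\to G$ satisfying $\tilde{\iota}_G\circ\iota=\iota_G$. Its differential is the $\mathbb{C}$-linear extension of $d\iota_G:\mathfrak{k}\to\mathrm{Lie}(G)$, hence a Lie algebra isomorphism $\mathfrak{k}_{\mathbb{C}}\xrightarrow{\sim}\mathrm{Lie}(G)$; thus $\tilde{\iota}_G$ is a local diffeomorphism, its image is an open (hence closed) subgroup of the connected group $G$ containing $K$, and therefore $\tilde{\iota}_G$ is a surjective covering.

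It remains to show $\ker\tilde{\iota}_G$ is trivial, which is where the semisimplicity hypothesis enters essentially. The kernel is a discrete normal, hence central, subgroup of the connected group $K_{\mathbb{C}}$; and for a connected complex semisimple Lie group $K_{\mathbb{C}}$ with compact real form $\iota(K)$ one has $Z(K_{\mathbb{C}})\subseteq\iota(K)$, because $Z(K_{\mathbb{C}})$ lies in every maximal torus of $K_{\mathbb{C}}$ and every such maximal torus is the complexification of a maximal torus of $\iota(K)$, on which the center is already compact. Since $\iota_G=\tilde{\iota}_G\circ\iota$ is injective on $K$ by the definition of complexification, the restriction of $\tilde{\iota}_G$ to $\iota(Z(K))\supseteq\ker\tilde{\iota}_G$ is injective, forcing $\ker\tilde{\iota}_G=\{e\}$. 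Hence $\tilde{\iota}_G$ is an isomorphism of complex Lie groups, which simultaneously yields uniqueness and the assertion that every complexification inherits the universal property. I expect the identification $Z(K_{\mathbb{C}})\subseteq\iota(K)$, resting on the Cartan decomposition $K_{\mathbb{C}}=\iota(K)\exp(i\,\mathfrak{k})$ and standard structure theory of maximal tori in connected complex semisimple groups, to be the principal technical obstacle in carrying out the plan.
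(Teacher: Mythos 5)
The paper itself gives no proof of this statement: it is quoted verbatim from Knapp (Proposition 7.5) and used as a black box, so there is no internal argument to compare against. Judged on its own, your plan is the standard one and is essentially correct: existence via a faithful unitary representation (Peter--Weyl) plus the fact that an analytic subgroup with semisimple Lie algebra is closed, and uniqueness by mapping the universal complexification $K_{\mathbb{C}}$ onto an arbitrary complexification $G$, checking the differential is an isomorphism, and killing the discrete central kernel using $Z(K_{\mathbb{C}})\subseteq\iota(K)$ together with injectivity of $\iota_G$ on $K$. The place where semisimplicity must enter is exactly where you put it; indeed uniqueness genuinely fails for tori (e.g.\ $U(1)$ sits as a ``complexification'' inside both $\mathbb{C}^{\times}$ and any elliptic curve), which is a good consistency check on your argument.

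Two points need tightening. First, the maximal-torus step is misstated: it is not true that \emph{every} maximal torus of $K_{\mathbb{C}}$ is the complexification of a maximal torus of $\iota(K)$ (only up to conjugacy), and ``on which the center is already compact'' is not yet an argument. What you want is: $Z(K_{\mathbb{C}})$ lies in one maximal torus of the form $T\exp(i\mathfrak{t})$ with $T\subset\iota(K)$ maximal, and for $z=\exp(H+iH')$ central every root must vanish on $H'$, which forces $H'=0$ by semisimplicity, hence $Z(K_{\mathbb{C}})\subseteq T\subseteq\iota(K)$; alternatively, use finiteness of $Z(K_{\mathbb{C}})$ (true for complex semisimple groups) together with maximal compactness of $\iota(K)$ from the Cartan decomposition $K_{\mathbb{C}}=\iota(K)\exp(i\mathfrak{k})$. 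Second, your covering argument silently uses that the universal complexification of a compact group satisfies $\mathrm{Lie}(K_{\mathbb{C}})=\mathfrak{k}\oplus i\,d\iota(\mathfrak{k})$, that $\iota$ is injective, and that $K_{\mathbb{C}}$ is connected; none of this is contained in the universal property as stated in the paper's Theorem preceding this one, so it must either be quoted from Hochschild's construction or deduced (for semisimple $K$ one can get it by mapping $K_{\mathbb{C}}$ to your matrix-group model via the universal property). With these repairs the proof goes through.
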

The heat kernel $\rho_K$ has an analytic continuation to $K_{\mathbb{C}}$ (see \cite[Proposition 1]{bh1}). Next, we fix notation related to $K_{\mathbb{C}}$. Let $dg$ be a Haar measure on $K_{\mathbb{C}}$ and $\mathfrak{k}_{\mathbb{C}}$ be the Lie algebra of $K_{\mathbb{C}}$. We define an inner product $\langle , \rangle_{\mathfrak{k}_{\mathbb{C}}}$ by
\begin{align*}
\langle X + iX', X'' + iX''' \rangle_{\mathfrak{k_{\mathbb{C}}}} = \langle X, X'' \rangle_{\mathfrak{k}} + \langle X', X''' \rangle_{\mathfrak{k}},\, \, X, X', X'', X'''\in \mathfrak{k}.
\end{align*}
Let $Y_{j}=iX_{j} (1\leq j \leq m)$. Then, $\{ X_1, ... , X_m, Y_1, ... , Y_m\}$ is an orthonormal basis of $\mathfrak{k}_{\mathbb{C}}$.
\begin{rem}
A decomposition $\mathfrak{k}_{\mathbb{C}}=\mathfrak{k}\oplus i\mathfrak{k}$ is exactly the Cartan decomposition for the Riemannian symmetric pair $(K_{\mathbb{C}}, K)$.
\end{rem}
The inner product $\langle , \rangle_{\mathfrak{k}_{\mathbb{C}}}$ induces a left $K_{\mathbb{C}}$ invariant and right $K$ invariant Riemannian metric on $K_{\mathbb{C}}$.\, Let $\Delta_{K_{\mathbb{C}}} = \sum_{j = 1}^{m}(\tilde{X_j}\circ\tilde{X_j}+ \tilde{Y_j}\circ\tilde{Y_j})$. The differential operator $\Delta_{K_{\mathbb{C}}}$ is a Laplace-Beltrami operator on $K_{\mathbb{C}}$ as a Riemannian manifold. Following Hall \cite{bh1}, we give a definition of a heat kernel and recall an existence and uniqueness theorem which is slightly different from the ones for $K$.
\begin{defi}\label{cpxheat}
We call a function $\rho_{K_{\mathbb{C}}} :(0,\infty)\times K_{\mathbb{C}} \rightarrow (0,\infty)$ a heat kernel of $K_{\mathbb{C}}$ if $\rho_{K_{\mathbb{C}}}$ satisfies the conditions below.
\begin{eqnarray}
&\mathrm{(i)}& \frac{\partial}{\partial t}(\rho_{K_{\mathbb{C}}} * f)=\frac{1}{4}\Delta_{K_{\mathbb{C}}} (\rho_{K_{\mathbb{C}}} * f),\nonumber \\
&\mathrm{(ii)}& \parallel \rho_{K_{\mathbb{C}}} * f - f \parallel _{L^2(K_{\mathbb{C}}, dg)} \rightarrow 0 \, \, (t \rightarrow +0),\, \,  f \in L^2(K_{\mathbb{C}},dg). \nonumber
\end{eqnarray}
\end{defi}
\begin{thm}(\cite{ne1})
Let $A$ be a closure of $\frac{1}{4}\Delta_{K_{\mathbb{C}}}$ as a operator on $L^2(K_{\mathbb{C}},dg)$ and $e^{tA}$ be a semigroup generated by $A$. Then, $e^{tA}$ is a convolution operator and its integral kernel is a heat kernel of $K_{\mathbb{C}}$ (we denote it $\rho_{K_{\mathbb{C}}}$).
\end{thm}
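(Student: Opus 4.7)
The plan is to verify that $\tfrac{1}{4}\Delta_{K_{\mathbb{C}}}$ is essentially self-adjoint on $C_c^\infty(K_{\mathbb{C}})$ with nonpositive closure $A$, appeal to the spectral theorem to produce a strongly continuous contraction semigroup $e^{tA}$, observe that left-invariance of the metric forces this semigroup to be realized as convolution, and finally upgrade the resulting convolution kernel from a distribution to a smooth positive function satisfying Definition \ref{cpxheat}.

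First I would check symmetry and nonpositivity of $\tfrac{1}{4}\Delta_{K_{\mathbb{C}}}$ on $C_c^\infty(K_{\mathbb{C}})$. Because $K_{\mathbb{C}}$ is the complexification of a compact group, it is reductive, hence unimodular, so each left-invariant vector field $\tilde{X}_j$ and $\tilde{Y}_j$ is skew-adjoint with respect to $dg$. Integration by parts then gives
\[
\bigl\langle \tfrac{1}{4}\Delta_{K_{\mathbb{C}}} f, g\bigr\rangle = -\tfrac{1}{4}\sum_{j=1}^m\bigl(\langle \tilde{X}_j f, \tilde{X}_j g\rangle + \langle \tilde{Y}_j f, \tilde{Y}_j g\rangle\bigr)
\]
for $f,g \in C_c^\infty(K_{\mathbb{C}})$, so the operator is symmetric and nonpositive. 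The left $K_{\mathbb{C}}$-invariant Riemannian metric on $K_{\mathbb{C}}$ is geodesically complete, since every left-invariant metric on a Lie group is, so by Strichartz's essential self-adjointness theorem for the Laplacian on a complete Riemannian manifold, $\tfrac{1}{4}\Delta_{K_{\mathbb{C}}}$ is essentially self-adjoint on $C_c^\infty(K_{\mathbb{C}})$ and its closure $A$ is nonpositive and self-adjoint.

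The spectral theorem then produces a strongly continuous contraction semigroup $\{e^{tA}\}_{t\geq 0}$ on $L^2(K_{\mathbb{C}}, dg)$. Since $\Delta_{K_{\mathbb{C}}}$ is left $K_{\mathbb{C}}$-invariant, each $e^{tA}$ commutes with left translations by elements of $K_{\mathbb{C}}$, and a standard argument shows that any bounded left-translation-invariant operator on $L^2(K_{\mathbb{C}}, dg)$ must take the form $f\mapsto \rho_{t}* f$ for some distribution $\rho_{t}$ on $K_{\mathbb{C}}$ depending measurably on $t$. Writing $\rho_{K_{\mathbb{C}}}(t,\cdot):=\rho_t$, condition (i) of Definition \ref{cpxheat} follows from pushing $\tfrac{d}{dt}e^{tA}f = A e^{tA}f$ through the convolution identification, and (ii) from strong continuity of the semigroup at $t=0^+$.

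The main obstacle I expect is upgrading $\rho_{K_{\mathbb{C}}}$ from a distribution to a smooth strictly positive function, as required by Definition \ref{cpxheat}. For smoothness I would invoke hypoellipticity of the parabolic operator $\partial_t - \tfrac{1}{4}\Delta_{K_{\mathbb{C}}}$ on $(0,\infty)\times K_{\mathbb{C}}$, which holds because $\Delta_{K_{\mathbb{C}}}$ is elliptic; the kernel satisfies this equation distributionally and is therefore smooth for $t>0$. For positivity, the cleanest route is probabilistic: realise $e^{tA}$ as the transition semigroup of Brownian motion on $K_{\mathbb{C}}$, whose transition density is automatically positive. Alternatively one can apply the parabolic strong maximum principle to $\rho_{K_{\mathbb{C}}}* f$ for compactly supported nonnegative $f$, combined with approximation of the identity along $t\to 0^+$.
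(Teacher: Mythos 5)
The paper offers no argument of its own for this statement: it is imported wholesale from Nelson \cite{ne1}, where the heat semigroup generated by a left-invariant Laplacian on a Lie group is constructed and its kernel is shown to be a smooth, positive convolution kernel by semigroup theory combined with a direct parabolic/fundamental-solution analysis on the group. Your proposal is therefore a genuinely independent route, and in outline it is the standard modern one: skew-adjointness of left-invariant vector fields on the unimodular group $K_{\mathbb{C}}$ gives symmetry and nonpositivity on $C_c^\infty$; completeness of the left-invariant metric plus the Gaffney--Strichartz theorem gives essential self-adjointness, so "the closure $A$" is unambiguous and self-adjoint; the spectral theorem gives the contraction semigroup; invariance plus the Schwartz kernel theorem gives the convolution form; hypoellipticity of $\partial_t-\frac{1}{4}\Delta_{K_{\mathbb{C}}}$ gives smoothness; and the strong maximum principle (or the Brownian-motion realization) on the connected group gives strict positivity, yielding conditions (i) and (ii) of Definition \ref{cpxheat}. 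What your route buys is transparency and generality (it works verbatim on any unimodular Lie group, indeed any complete manifold with a transitive isometry group), at the cost of invoking heavier machinery than Nelson needs. Two steps you wave at do deserve actual arguments: the claim that a bounded operator commuting with all left translations is convolution by a distribution should be run through the Schwartz kernel theorem (the kernel of $e^{tA}$ is invariant under the diagonal left action, hence of the form $\rho_t(h^{-1}g)$), and one must check that $\rho$ solves the heat equation distributionally jointly in $(t,g)$ before hypoellipticity applies, and that $\rho_t\in L^2$ (via $e^{tA}=e^{tA/2}e^{tA/2}$ and self-adjointness) so that $\rho_t*f$ makes sense pointwise for all $f\in L^2$. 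These are standard and fixable, so I see no genuine gap, only details to be written out.
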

Now, we recall B. Hall's theorem. Let $HL^2(K_{\mathbb{C}},\alpha(g)dg)$ be the weighted Bergman space of holomorphic $L^2$ functions on $K_{\mathbb{C}}$ with respect to the weight $\alpha$ which is a positive continuous function on $K_{\mathbb{C}}$.
\begin{thm}(\cite[Theorem 1']{bh1})\\
Fix a positive number $t>0$. Let $B_{t}: L^2(K, \rho_{K}(t,k)dk)\rightarrow HL^2(K_{\mathbb{C}}, \rho_{K_{\mathbb{C}}}(t,g)dg)$ be a linear operator given by
\begin{align*}
B_{t}f(g)=\int_{K}\rho_{K}(t,k^{-1}g)f(k)dk.
\end{align*}
Then, $B_{t}$ is a unitary isomorphism.
\end{thm}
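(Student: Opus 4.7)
The argument decomposes into three parts: (i) $B_tf$ is holomorphic on $K_{\mathbb{C}}$; (ii) $B_t$ is an isometry; (iii) $B_t$ is surjective. The workhorse is the Peter--Weyl decomposition of $L^2(K,dk)$ combined with the character expansion of $\rho_K$ given in Theorem~\ref{cpt_heat}. The structural fact linking $K$ to $K_{\mathbb{C}}$ is that, by the universal property of the complexification, every irreducible unitary representation of $K$ extends uniquely to a holomorphic representation of $K_{\mathbb{C}}$; consequently each matrix coefficient $\phi^\tau_{v,w}(k)=\langle\tau(k)v,w\rangle$ admits a canonical holomorphic extension to $K_{\mathbb{C}}$, which I also denote by $\phi^\tau_{v,w}$.

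Part (i) is straightforward: the analytic continuation of $\rho_K(t,\cdot)$ to $K_{\mathbb{C}}$, combined with differentiation under the integral (justified by Cauchy--Schwarz against the weight $\rho_K(t,k)dk$ on the compact set $K$), shows $B_tf$ is holomorphic. For part (ii) I would argue first on a matrix coefficient: $\phi^\tau_{v,w}$ is an eigenfunction of $\Delta_K$ with eigenvalue $-c_\tau$, so the heat semigroup relation yields $(\rho_K(t,\cdot)*\phi^\tau_{v,w})(k) = e^{-c_\tau t/2}\phi^\tau_{v,w}(k)$ on $K$, and analytic continuation promotes this to $B_t\phi^\tau_{v,w}(g) = e^{-c_\tau t/2}\phi^\tau_{v,w}(g)$ for $g \in K_{\mathbb{C}}$. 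Schur orthogonality together with the character expansion of $\rho_K$ evaluates the norm on the $K$-side, and the heart of the matter is the matching identity on the $K_{\mathbb{C}}$-side, which I expect to read
\[
\int_{K_{\mathbb{C}}}|\phi^\tau_{v,w}(g)|^2\,\rho_{K_{\mathbb{C}}}(t,g)\,dg = \frac{e^{c_\tau t}}{\dim V_\tau}\|v\|^2\|w\|^2.
\]
I would establish this using the polar decomposition $K_{\mathbb{C}} = K\exp(i\mathfrak{k})$ (the Cartan decomposition for the symmetric pair $(K_{\mathbb{C}},K)$) together with the $K$-bi-invariance of $\rho_{K_{\mathbb{C}}}$, reducing the integral to one over $\exp(i\mathfrak{k})$ and exploiting the fact that the holomorphic extension of $\phi^\tau_{v,w}$ is an eigenfunction of $\Delta_{K_{\mathbb{C}}}$ with a known eigenvalue related to $c_\tau$. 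Combining the two identities yields equality of norms term by term, and density of Peter--Weyl matrix coefficients in $L^2(K,\rho_K dk)$ promotes this to the full isometry.

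The chief obstacle is part (iii). After (ii) the image is a closed subspace of $HL^2$, so surjectivity reduces to showing that the holomorphic extensions of Peter--Weyl matrix coefficients form a total set in $HL^2(K_{\mathbb{C}},\rho_{K_{\mathbb{C}}}dg)$. Given $F \in HL^2$ orthogonal to all such extensions, I would expand $F$ in a $K \times K$-isotypic decomposition (available because $\rho_{K_{\mathbb{C}}}$ is $K$-bi-invariant and $HL^2$ is $K \times K$-invariant) and show each isotypic component vanishes. The technical delicacy is that $HL^2$-functions do not \emph{a priori} restrict sensibly to $K$, so one must rely on pointwise submean-value estimates for holomorphic functions on $K_{\mathbb{C}}$, together with the strict positivity and Gaussian-type decay of $\rho_{K_{\mathbb{C}}}$, to justify convergence of the isotypic expansion in the weighted $L^2$ sense and to identify the coefficients as evaluations against holomorphic matrix coefficients. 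This last step is where the deepest input is needed and where Hall's argument leverages the interplay between analyticity and the Gaussian weight.
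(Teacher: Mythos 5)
You should first note a mismatch of framing: the paper does not prove this statement at all --- it is quoted verbatim from Hall \cite[Theorem 1']{bh1} as background for posing Problem \ref{SL2Cheat} --- so there is no internal proof to compare with. Measured against Hall's actual argument, your outline has the right architecture (holomorphicity, isometry checked on matrix coefficients, surjectivity via density of holomorphic matrix entries), but the central isometry step contains genuine errors. First, the proposed mechanism cannot work: every holomorphic function $u$ on $K_{\mathbb{C}}$ satisfies $\tilde{Y}_j u = i\tilde{X}_j u$, hence $\Delta_{K_{\mathbb{C}}}u=\sum_j(\tilde{X}_j^2+\tilde{Y}_j^2)u=0$; so the holomorphic extension of $\phi^\tau_{v,w}$ is $\Delta_{K_{\mathbb{C}}}$-harmonic, and there is no ``eigenvalue related to $c_\tau$'' to exploit. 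Likewise $\rho_{K_{\mathbb{C}}}(t,\cdot)$ is invariant under conjugation by $K$ (the metric is left-$K_{\mathbb{C}}$- and right-$K$-invariant) but is \emph{not} $K$-bi-invariant --- if it were, it would coincide with its $K$-average, and Hall's separate theorems for the two measures would collapse --- so the reduction of the integral to $\exp(i\mathfrak{k})$ and the Schur-type argument you lean on are not available. Second, the key identity itself is false as stated: since $B_t\phi^\tau_{v,w}=e^{-c_\tau t/2}\phi^\tau_{v,w}$ (analytically continued), unitarity forces $\int_{K_{\mathbb{C}}}|\phi^\tau_{v,w}|^2\rho_{K_{\mathbb{C}}}(t,g)\,dg=e^{c_\tau t}\int_K|\phi^\tau_{v,w}(k)|^2\rho_K(t,k)\,dk$, and the right-hand side is not $e^{c_\tau t}\|v\|^2\|w\|^2/\dim V_\tau$: Schur orthogonality holds for Haar measure, not for the weighted measure $\rho_K(t,k)dk$. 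For $K=SU(2)$, $\tau$ the defining representation and $v=w=e_1$, one computes $\int_K|k_{11}|^2\rho_K(t,k)dk=\tfrac12\bigl(1+e^{-c_1t}\bigr)$, which is genuinely $t$-dependent.

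The same phenomenon breaks your concluding step: because the weights $\rho_K(t,k)dk$ and $\rho_{K_{\mathbb{C}}}(t,g)dg$ are non-constant, matrix coefficients of inequivalent irreducibles are no longer mutually orthogonal on either side, so ``equality of norms term by term'' plus density of matrix coefficients does \emph{not} yield an isometry; you must verify the inner-product identity on the whole dense span, i.e.\ match all cross terms. Hall's proof does this by working with $K$-averaged, non-holomorphic quantities such as $\int_K\pi(gk)\pi(gk)^*dk=\pi(g)\pi(g)^*$ and the $K$-averaged heat kernel, computing $\Delta_{K_{\mathbb{C}}}$ on these matrix-valued functions (this is where the factors $e^{c_\tau t}$ really come from), and only then passing to the statement of Theorem 1'. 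Finally, your part (iii) is, as you acknowledge, only a plan; the density of holomorphic matrix entries in $HL^2(K_{\mathbb{C}},\rho_{K_{\mathbb{C}}}(t,g)dg)$ is a substantial piece of Hall's paper and cannot be waved through via ``submean-value estimates'' alone. In short: right skeleton, but the isometry computation as proposed would not close, and the identities it rests on are incorrect.
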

We want to observe closely what happens actually in Theorem 8 for concrete examples. To see the simplest case $K=SU(2)$, we pose a problem.
\begin{prob}\label{SL2Cheat}
Calculate the heat kernel on $SL(2,\mathbb{C})$ explicitly.
\end{prob}

\section{Heat equations}

\subsection{Heat equations on semisimple Lie groups}
In this section, we generalize Problem \ref{SL2Cheat}. Let $G$ be a noncompact connected semisimple Lie group , $dg$ be a Haar measure on $G$, $K\subset G$ be a maximal compact subgroup and $dk$ be the normalized Haar measure on $K$. The pair $(G,K)$ is a Riemannian symmetric pair (see \cite[Chapter 5]{helg1}). Let $\sigma:G\rightarrow G$ be a Cartan involution with respect to the pair $(G,K)$. Let $\mathfrak{g}$ be the Lie algebra of G, $\mathfrak{k}$ be the Lie algebra of $K$ and $d\sigma:\mathfrak{g}\rightarrow\mathfrak{g}$ be a differential of $\sigma$. Then, $\mathfrak{k}=\{X\in\mathfrak{g}\, |\, d\sigma(X)=X\}$. We denote by $\mathfrak{p}$ the set $\{X\in\mathfrak{g}\, |\, d\sigma(X)=-X\}$. Then, we get the decomposition $\mathfrak{g}=\mathfrak{k}\oplus\mathfrak{p}$ as a linear space. We take an $\mathrm{Ad}(K)$ invariant inner product $\langle , \rangle_\mathfrak{g}$ on $\mathfrak{g}$. Then, $\langle , \rangle_\mathfrak{g}$ induces a left $G$ invariant and right $K$ invariant Riemannian metric on $G$. Let $\{ X_1, ... , X_m\}$ be an orthonormal basis of $\mathfrak{k}$, $\{ Y_1, ... , Y_n\}$ be an orthonormal basis of $\mathfrak{p}$, and $\tilde{X}_1, ... , \tilde{X}_m, \tilde{Y}_1, ... , \tilde{Y}_n$ be left invariant vector fields on $G$ which are induced by elements $X_1, ... , X_m, Y_1, ... , Y_n$. Let $\Delta_G = \sum_{j = 1}^{m}\tilde{X}_j\circ\tilde{X}_j +\sum_{k = 1}^{n}\tilde{Y}_k\circ\tilde{Y}_k$. The differential operator $\Delta_G$ is a Laplace-Beltrami operator on $G$ as a Riemannian manifold. 
\begin{defi}\label{riemannheat}
We call a function $\rho_{G} :(0,\infty)\times G \rightarrow (0,\infty)$ a heat kernel on $G$ if $\rho_{G}$ satisfies the conditions below:
\begin{eqnarray}
&\mathrm{(i)}& \frac{\partial}{\partial t}(\rho_{G} * f)=\Delta_{G} (\rho_{G} * f),\nonumber \\
&\mathrm{(ii)}& \parallel \rho_{G} * f - f \parallel _{L^2(G, dg)} \rightarrow 0 \, \, (t \rightarrow +0),\, \,  f \in L^2(G,dg). \nonumber
\end{eqnarray}
\end{defi}
\begin{rem}
Definition \ref{cpxheat} in Section \ref{SBtrans} is a special example of Definition \ref{riemannheat} because $(K_{\mathbb{C}},K)$ is a Riemannian symmetric pair.
\end{rem}
Now, we pose a generalized problem.
\begin{prob}\label{G_heat}
Calculate the heat kernel on $G$ explicitly. Especially, calculate the heat kernel on $SL(2,\mathbb{R})$.
\end{prob}

\subsection{Existence and uniqueness of the heat kernel}
First, we recall basic notions. Let $M$ be a Riemannian manifold. Let $\mathfrak{X}(M)$ be the set of all vector fields on $M$, $\nabla:\mathfrak{X}(M)\times\mathfrak{X}(M)\rightarrow\mathfrak{X}(M)$ be the Levi-Civita connection, $R(X,Y):=\nabla_{X}\nabla_{Y}-\nabla_{Y}\nabla_{X}-\nabla_{[X,Y]}$ be the curvature of the pair $(X,Y)\in\mathfrak{X}(M)\times\mathfrak{X}(M)$ and let $\mathrm{Ric}:\mathfrak{X}(M)\times\mathfrak{X}(M)\rightarrow C^{\infty}(M)$ be the Ricci curvature given by
\begin{align*}
\mathrm{Ric}(X,Y)=\sum_{j=1}^{m}\langle R(X,E_{j})Y,E_{j} \rangle,\, \,  X, Y\in\mathfrak{X}(M),
\end{align*}
where $\{E_j\}_{j=1}^m$ is a frame field on $M$ (see \cite[Lemma 52 in Chapter 3]{neil}). We recall an important notion about the Ricci curvature.
\begin{defi}\label{riccibound}(cf. \cite[Section 3.3]{cha})
We say that the Ricci curvature is bounded from below if there exists $\kappa\in\mathbb{R}$ such that
\begin{align*}
\mathrm{Ric}(u,u)\geq\kappa||u||_{x}^2,\, \, u\in T_{x}M, x\in M.
\end{align*}
\end{defi}

Next, we discuss the heat kernel. Let $dv$ be the volume form on $M$ and $\Delta$ be the Laplace-Beltrami operator.
\begin{defi}
A function $\rho$ on $(0,\infty)\times M\times M$ is called a heat kernel if the following holds: for each $f_{0}\in L^2(M,dv)$, if we put $\displaystyle f(t,x):=\int_{M}\rho(t,x,y)f_{0}(y)dv(y)\, (t>0)$, then $f(t,\bullet)\in L^2(M,dv)$ for all $t>0$ and $f$ satisfies the conditions below.
\begin{eqnarray}
&\mathrm{(i)}& \frac{\partial f}{\partial t}=\Delta f,\nonumber \\
&\mathrm{(ii)}& ||f(t,\bullet)-f_{0}||_{L^2(M,dv)}\rightarrow 0\,\, (t\rightarrow +0).\nonumber
\end{eqnarray}
\end{defi}
We show existence and uniqueness of the heat kernel on $G$. Dodziuk's work \cite{dod} tells us that if $M$ is complete with Ricci curvature bounded from below, then there exists a unique heat kernel. Thus, it is enough to show that $G$ is a complete Riemannian manifold with Ricci curvature bounded from below. The statements below are well-known, while we write their proofs for completeness.
\begin{lem}
$G$ is complete.
\end{lem}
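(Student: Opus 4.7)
The plan is to exhibit $G$ as a Riemannian homogeneous space and invoke the standard principle that such spaces are geodesically complete; completeness as a metric space then follows from the Hopf--Rinow theorem.

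First I would record the observation that the Riemannian metric built from $\langle\,,\,\rangle_{\mathfrak{g}}$ is left $G$-invariant by construction. Hence, for every $g \in G$, the left translation $L_g : G \to G$, $L_g(h) = gh$, is an isometry, and the group of isometries of $G$ acts transitively on $G$. Thus $G$ is a Riemannian homogeneous manifold.

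The core step is to argue that any Riemannian homogeneous manifold $M$ is geodesically complete. Fix a basepoint $x_{0} \in M$. By the local existence theorem for the geodesic ODE applied at $x_{0}$, together with the scaling symmetry $\gamma_{\lambda v}(t) = \gamma_{v}(\lambda t)$, there is some $\epsilon_{0} > 0$ such that every unit-speed geodesic starting at $x_{0}$ is defined on $(-\epsilon_{0}, \epsilon_{0})$. Given any other point $x \in M$, choose an isometry $\phi$ of $M$ with $\phi(x_{0}) = x$; since isometries carry geodesics to geodesics, every unit-speed geodesic starting at $x$ is also defined on $(-\epsilon_{0}, \epsilon_{0})$. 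This uniform lower bound $\epsilon_{0}$ on the extendability interval, independent of the starting point, rules out the existence of a maximal unit-speed geodesic $\gamma : [0, a) \to M$ with $a < \infty$: picking $t_{0} \in (a - \epsilon_{0}, a)$, one extends $\gamma$ beyond $a$ using the geodesic with initial data $(\gamma(t_{0}), \dot\gamma(t_{0}))$. Hence $M$ is geodesically complete.

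Specializing to $M = G$, we conclude $G$ is geodesically complete, and the Hopf--Rinow theorem then yields that $G$ is complete as a metric space. I do not expect any substantive obstacle: the whole argument is routine once one notices that the left action of $G$ by isometries is transitive. The only point requiring a little care is the uniform extendability of geodesics, which is exactly where the homogeneity hypothesis enters.
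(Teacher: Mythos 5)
Your proof is correct, but it takes a genuinely different route from the paper. The paper verifies metric completeness directly: given a Cauchy sequence $\{g_\nu\}$, it uses local compactness of $G$ to pick a relatively compact ball $U$ around $e$, notes that left translations are isometries, translates the tail of the sequence by $l_{g_N^{-1}}$ so that it lies in $U$, extracts the limit there, and translates back. You instead observe that the left-invariant metric makes $G$ a Riemannian homogeneous space, prove that any such space is geodesically complete (uniform extendability radius $\epsilon_0$ at one point, obtained from local existence of the geodesic flow plus compactness of the unit sphere at $x_0$ and the scaling symmetry, transported everywhere by isometries, then the standard extension of a maximal geodesic), and conclude metric completeness from Hopf--Rinow. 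Both arguments hinge on the same essential fact that left translations act transitively by isometries; the paper's version is more elementary in that it needs no geodesic theory or Hopf--Rinow, only local compactness, while yours proves the stronger and more conceptual general statement that every homogeneous Riemannian manifold is complete. The only step in your write-up deserving an explicit word is where the uniform $\epsilon_0$ comes from: the local existence theorem gives an $\epsilon$ uniform over a neighborhood of the zero vector in $T_{x_0}G$ (equivalently, over the compact unit sphere at $x_0$ after rescaling), and it is this compactness that makes $\epsilon_0$ independent of the unit initial velocity; as stated your appeal to ``local existence plus scaling'' is fine but could be made precise on this point.
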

\begin{proof}
We show that every Cauchy sequence converges. Let $d$ be the Riemannian distance on $G$ and $\{g_{\nu}\}_{\nu=1}^{\infty}$ be any Cauchy sequence in $G$. Since $G$ is locally compact, there exists $r>0$ such that the open subset $U=\{g\in G|d(g,e)<r\}$ is relatively compact. Since $\{g_{\nu}\}_{\nu=1}^{\infty}$ is a Cauchy sequence, there exists $N\in\mathbb{N}$ such that $d(g_{\nu},g_{\nu'})<r$ for all $\nu, \nu'\geq N$. For $g\in G$, let $l_{g}:G\ni x\mapsto gx\in G$. These are isometries. We consider the sequence $\{l_{g_{N}^{-1}}(g_{\nu})\}_{\nu=1}^{\infty}$. This is also a Cauchy sequence and $\{l_{g_{N}^{-1}}(g_{\nu})\}_{\nu=N}^{\infty}\subset U$. Since $U$ is relatively compact, the sequence $\{l_{g_{N}^{-1}}(g_{\nu})\}_{\nu=1}^{\infty}$ converges in $G$. Then, the sequence $\{g_{\nu}\}_{\nu=1}^{\infty}$ converges to $\displaystyle l_{g_{N}}(\lim_{\nu\to\infty}l_{g_{N}^{-1}}(g_{\nu}))$.
\end{proof}
\begin{lem}
The Ricci curvature is bounded from below.
\end{lem}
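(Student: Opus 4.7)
The plan is to exploit the homogeneity of $G$ under left translations: the Ricci tensor is an isometry invariant of the Riemannian structure, so since the metric is left $G$-invariant, so is $\mathrm{Ric}$. Since $G$ acts transitively on itself by left translations, the Ricci tensor is completely determined by its value at the identity, which is a single symmetric bilinear form on the finite-dimensional inner product space $(\mathfrak{g},\langle\,,\,\rangle_{\mathfrak{g}})$, hence automatically bounded below.

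More concretely, I would proceed in the following steps. First, for each $g\in G$, the left translation $L_g:G\to G$, $x\mapsto gx$, is an isometry of $(G,\langle\,,\,\rangle_\mathfrak{g})$ because the Riemannian metric is left $G$-invariant by construction. Since Ricci curvature is built functorially from the metric via the Levi-Civita connection and the Riemann curvature tensor, every isometry preserves it; in particular
\[
\mathrm{Ric}_{gx}\bigl((dL_g)_x u,(dL_g)_x v\bigr)=\mathrm{Ric}_x(u,v),\qquad u,v\in T_xG.
\]
Setting $x=e$ shows that $\mathrm{Ric}_g$ at any point $g\in G$ is the pullback of $\mathrm{Ric}_e$ under $(dL_{g^{-1}})_g$, which is a linear isometry $T_gG\to T_eG=\mathfrak{g}$.

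Second, the bilinear form $\mathrm{Ric}_e:\mathfrak{g}\times\mathfrak{g}\to\mathbb{R}$ is symmetric on a finite-dimensional inner product space, so its associated symmetric operator has a smallest eigenvalue $\kappa\in\mathbb{R}$, and consequently
\[
\mathrm{Ric}_e(X,X)\geq \kappa\,\langle X,X\rangle_{\mathfrak{g}},\qquad X\in\mathfrak{g}.
\]
Third, I would transfer this bound to an arbitrary point $g\in G$: given $u\in T_gG$, set $X=(dL_{g^{-1}})_g u\in\mathfrak{g}$; by the isometry property and the invariance of $\mathrm{Ric}$ established above,
\[
\mathrm{Ric}_g(u,u)=\mathrm{Ric}_e(X,X)\geq \kappa\,\langle X,X\rangle_{\mathfrak{g}}=\kappa\,\|u\|_g^2,
\]
which is exactly the condition in Definition \ref{riccibound}.

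There is no real obstacle here beyond being careful that the metric we use is genuinely left $G$-invariant (which is granted in the setup of Section 3.1, since $\langle\,,\,\rangle_\mathfrak{g}$ induces a left $G$-invariant, right $K$-invariant metric); right $K$-invariance plays no role in the argument. The only point that might warrant a remark is the passage from left invariance of the metric to left invariance of $\mathrm{Ric}$, which is immediate from the naturality of $\nabla$ and $R$ under isometries.
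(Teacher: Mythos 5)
Your proof is correct and follows essentially the same route as the paper: reduce to the identity by left-invariance, then observe that a symmetric bilinear form on the finite-dimensional space $T_eG=\mathfrak{g}$ is automatically bounded below relative to the inner product. The only cosmetic difference is that you obtain left-invariance of $\mathrm{Ric}$ from the naturality of curvature under the isometries $L_g$, whereas the paper deduces it from the left-invariance of $\nabla_{\tilde X}\tilde Y$ for left-invariant vector fields (citing Helgason); both arguments are equally valid.
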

\begin{proof}
It is known that $\nabla_{\tilde{X}}\tilde{Y}$ is left invariant for $X, Y\in\mathfrak{g}$ (see \cite[Section 1.3 in Chapter 2]{helg1}). Thus, $R(\tilde{X},\tilde{Y})\tilde{Z}$ is also left invariant for $X,Y,Z\in\mathfrak{g}$. We put $\{E_{j}\}=\{\tilde{X_{j}}\}_{j=1}^{m}\cup\{\tilde{Y_{k}}\}_{k=1}^{n}$ as a frame field. The value $\mathrm{Ric}(\tilde{X},\tilde{Y})$ is a constant on $G$ for each  $X,Y\in\mathfrak{g}$. Then, value of $\mathrm{Ric}$ and $\kappa$ in Definition \ref{riccibound} is determined on $T_{e}G$.
\end{proof}
\begin{cor}
The heat kernel on $G$ exists uniquely.
\end{cor}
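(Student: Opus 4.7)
The plan is to invoke Dodziuk's theorem \cite{dod} directly, since the two preceding lemmas furnish exactly its hypotheses. The proof then reduces to two short tasks: extract the two-variable heat kernel from Dodziuk, and convert it to the convolution form demanded by Definition \ref{riemannheat}.

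First, by the two lemmas above, $G$ is a complete Riemannian manifold whose Ricci curvature is bounded from below. Dodziuk's result then yields a unique function $\rho(t,x,y)$ on $(0,\infty)\times G\times G$ such that, for every $f_0\in L^2(G,dv)$, the function $f(t,x)=\int_G \rho(t,x,y)f_0(y)\,dv(y)$ satisfies $\partial_t f=\Delta_G f$ and tends to $f_0$ in $L^2$ as $t\to +0$.

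Next, to pass to the convolution formulation, I would exploit left-invariance of the metric. Every left translation $l_h$ is an isometry of $G$, so $\Delta_G$ commutes with $l_h^{*}$; applying $l_h^{*}$ to the defining equations and invoking uniqueness yields $\rho(t,hx,hy)=\rho(t,x,y)$. Setting $h=y^{-1}$ and defining $\rho_G(t,g):=\rho(t,g,e)$, the integral action becomes
\[
\int_G \rho(t,x,y)f(y)\,dv(y)=\int_G \rho_G(t,y^{-1}x)f(y)\,dv(y)=(\rho_G \ast f)(x),
\]
which is exactly the convolution operator appearing in Definition \ref{riemannheat}. Uniqueness of $\rho_G$ is then inherited from uniqueness of $\rho$.

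The only subtle point I anticipate is compatibility between the Riemannian volume form $dv$ and the Haar measure $dg$ used in Definition \ref{riemannheat}. Since the metric $\langle\,,\,\rangle_{\mathfrak{g}}$ is left-invariant, $dv$ is a left Haar measure on $G$ and hence differs from $dg$ only by a positive scalar, which can be absorbed into the normalization of $\rho_G$ (or one may simply take $dg=dv$). Beyond this routine normalization check, the corollary is an immediate consequence of the two preceding lemmas and Dodziuk's cited theorem.
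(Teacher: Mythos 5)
Your proposal is correct and follows essentially the same route as the paper: the two preceding lemmas are there precisely to supply the hypotheses of Dodziuk's theorem \cite{dod}, from which existence and uniqueness follow at once. Your extra step converting Dodziuk's two-variable kernel $\rho(t,x,y)$ into the convolution form of Definition \ref{riemannheat} via left-invariance of the metric (and reconciling the Riemannian volume with the Haar measure) is a detail the paper leaves implicit, and you carry it out correctly.
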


\section{Decomposition of the $L^2$ space}\label{decom}
We introduce a decomposition of $L^2(G,dg)$ to be used later. Since the pair $(G,K)$ is a Riemannian symmetric pair, the map $\mathfrak{p}\times K\ni (Y,k)\mapsto e^{Y}k\in G$ is a diffeomorphism (see \cite[Theorem 6.31]{knapp}). In this situation, $\mathfrak{p}$ is diffeomorphic to $G/K$ naturally. Thus $G/K\times K$ and $G$ are diffeomorphic. Let $\iota:G/K\times K\rightarrow G$ be the diffeomorphism and $dp$ be a left $G$ invariant measure on $G/K$ such that $\iota^{*}dg = dp dk$. Then,
\begin{align*}
L^2(G,dg) \cong L^2(G/K,dp)\otimes L^2(K,dk).
\end{align*}
In this equality, the right hand side means completion of a tensor product space. Using the Peter-Weyl theorem:
\begin{align*}
L^2(K,dk)\cong\sum_{\tau\in \hat{K}}V_{\tau}\otimes V_{\tau}^*,
\end{align*} 
we get
\begin{align*}
L^2(G,dg) \cong \sum_{\tau\in \hat{K}}(L^2(G/K,dp)\otimes V_{\tau})\otimes V_{\tau}^{*}.
\end{align*}
Let
\begin{align*}
L^2(G,\tau)=\{f:G\rightarrow V_{\tau}\, |\, f(gk)=\tau(k^{-1})f(g)\, (k\in K, g\in G), \, \int_{G}||f(g)||^2_{V_{\tau}}dg<\infty \},
\end{align*}
which is regarded naturally as the space of $L^2$-sections of a homogeneous vector bundle $G\times_{K}V_{\tau}$ over $G/K$ (see \cite[Definition 3 in Chapter 4]{mits}). Note that $L^2(G,\tau)$ is preserved by the left translation $L_h\, (h\in G)$ given by $L_hf(g)=f(h^{-1}g)$.
\begin{lem}
The map
\begin{align*}
L^2(G/K,dp)\otimes V_{\tau}\ni f\otimes v \mapsto (\iota(p,k)\mapsto \tau(k^{-1})f(p)v)\in L^2(G,\tau)\, \, \, \, \, \, \, \, \, (1)
\end{align*}
gives an isomorphism as Hilbert spaces:
\begin{align*}
L^2(G/K,dp)\otimes V_{\tau}\cong L^2(G,\tau).
\end{align*}
\end{lem}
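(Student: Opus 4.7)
The plan is to show that the map $\Phi\colon f\otimes v\mapsto F_{f,v}$, where $F_{f,v}(\iota(p,k)):=\tau(k^{-1})f(p)v$, is well-defined, isometric, and surjective. Since $V_\tau$ is finite-dimensional, the Hilbert tensor product $L^2(G/K,dp)\otimes V_\tau$ coincides with the algebraic one; fixing an orthonormal basis $\{v_1,\dots,v_d\}$ of $V_\tau$, every element can be written uniquely as a finite sum $\sum_i f_i\otimes v_i$ with $f_i\in L^2(G/K,dp)$, so no completion subtleties intervene.

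First I would verify the $K$-covariance $F_{f,v}(gk')=\tau(k'^{-1})F_{f,v}(g)$: writing $gk'=\iota(p,kk')$ and invoking multiplicativity of $\tau$, this is immediate. Square-integrability then drops out of the norm computation that establishes isometry. Using Fubini together with $\iota^{*}dg=dp\,dk$, the unitarity of $\tau(k^{-1})$ on $V_\tau$, and the normalization $\int_{K}dk=1$, the inner norm $\|\sum_i f_i(p)\tau(k^{-1})v_i\|_{V_\tau}^{2}$ is $k$-independent and equals $\sum_i|f_i(p)|^{2}$, so the squared $L^2(G,\tau)$-norm of $\Phi(\sum_i f_i\otimes v_i)$ reduces to $\sum_i\|f_i\|_{L^2(G/K)}^{2}$, which is exactly the tensor norm.

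For surjectivity I would produce an explicit preimage: given $F\in L^2(G,\tau)$, set $f_i(p):=\langle F(\iota(p,e)),v_i\rangle_{V_\tau}$. The same Fubini-plus-unitarity argument applied to $\|F\|_{L^2(G,\tau)}^{2}$ shows $\sum_i\|f_i\|_{L^2(G/K)}^{2}=\|F\|_{L^2(G,\tau)}^{2}<\infty$, while the $K$-covariance of $F$ yields $F(\iota(p,k))=\tau(k^{-1})F(\iota(p,e))=\tau(k^{-1})\sum_i f_i(p)v_i$, identifying $F$ with $\Phi(\sum_i f_i\otimes v_i)$. No step here is genuinely hard; the only point requiring attention is to apply the unitarity of $\tau$ at the right moment to eliminate the $k$-dependence inside the integral over $dk$, which is what makes the norm identity collapse and makes the preimage formula work on the nose.
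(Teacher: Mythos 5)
Your proposal is correct and follows essentially the same route as the paper: the paper also proves the lemma by expanding $f(g)=\sum_{s}f_{s}(g)v_{s}$ in an orthonormal basis of $V_{\tau}$ and exhibiting $f\mapsto\sum_{s}f_{s}(\iota(\bullet,e_{K}))\otimes v_{s}$ as the explicit inverse of the map $(1)$, which is exactly your preimage formula $f_{i}(p)=\langle F(\iota(p,e)),v_{i}\rangle_{V_{\tau}}$. You additionally write out the isometry computation via $\iota^{*}dg=dp\,dk$ and unitarity of $\tau$, which the paper leaves implicit; this is a welcome but not essentially different addition.
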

\begin{proof}
Let $\{v_{1}, ... , v_{d}\}$ be an orthonormal basis of $V_{\tau}$. For a function $f\in L^2(G,\tau)$ and $g\in G$, we obtain the expression $f(g)=\sum_{s=1}^{d}f_{s}(g)v_{s}$. Then the map
\begin{align*}
L^2(G,\tau)\ni f\mapsto \sum_{s=1}^{d}f_{s}(\iota(\bullet, e_{K}))\otimes v_{s}\in L^2(G/K,dp)\otimes V_{\tau}
\end{align*}
is an inverse of the map $(1)$.
\end{proof}
Therefore we have
\begin{align*}
L^2(G,dg)\cong\sum_{\tau\in \hat{K}}L^2(G,\tau)\otimes V_{\tau}^*.
\end{align*}
We define an action of $G\times K$ on $L^2(G,dg)$ by
\begin{align*}
(h,k)\cdot f(g)=f(h^{-1}gk),\, \, h\in G,\, k\in K
\end{align*}
and an action of $G\times K$ on $L^2(G,\tau)\otimes V_{\tau}^{*}$ by
\begin{align*}
(h,k)\cdot f\otimes v^{*} =L_hf\otimes \tau^{*}(k)v^{*},\, \, h\in G,\, k\in K.
\end{align*}
In this situation, the isomorphism in the decomposition of $L^2(G,dg)$ above is an intertwining operator as $G\times K$ modules. We treat the differential operator $\Delta_{G}$ as an operator on $L^2(G,dg).$ Then, $\Delta_{G}$ preserves each subspace which is isomorphic to $L^2(G,\tau)\otimes V_{\tau}^{*}$. Thus, we can reduce the heat equation on $G$ to heat equations on homogeneous vector bundles $G\times_{K}V_{\tau}$.

\section{The spherical transforms}\label{ST}

\subsection{A multiplicity free subgroup}
Let us recall Camporesi's work here. Let $G$ be a locally compact group satisfying the 2nd axiom of countability and $K$ be a compact subgroup of $G$. For $\tau\in\hat{K}$ and $U\in\hat{G}$, let $m(\tau,U)$ be the multiplicity of $\tau$ in $U|_{K}$.
\begin{defi}(\cite{koo})
A compact subgroup $K$ of $G$ is said to be a multiplicity free subgroup if $m(\tau,U)\leq 1$ for all $\tau\in\hat{K}$ and all $U\in\hat{G}$.
\end{defi}
Examples of pairs $(G,K)$ where $K$ is a multiplicity free subgroup of $G$ are found in \cite[Theorem 1]{koo}. One of the examples is $(SU(1,1), S(U(1)\times U(1)))$. Let $c=\frac{1}{\sqrt{2}}
\begin{pmatrix}
1 & i \\
i & 1
\end{pmatrix}
$. Since $cSU(1,1)c^{-1}=SL(2,\mathbb{R})$ and $cS(U(1)\times U(1))c^{-1}=SO(2)$, the compact group $SO(2)$ is a multiplicity free subgroup of $SL(2,\mathbb{R})$.

\subsection{The spherical transforms}
We discuss the spherical transform of ``radial systems of sections'' (denoted by $C_{0}^{\infty}(G,\tau,\tau)$ below, see \cite{ca1}). Let $G$ be a connected noncompact semisimple Lie group with finite center and $K$ be a maximal compact subgroup of $G$. We assume that $K$ is a multiplicity free subgroup of $G$. Let $\tau\in\hat{K}$ and
\begin{flushleft}
$C_{0}^{\infty}(G,\tau,\tau)=$\\
$\{F\in C_{0}^{\infty}(G,\mathrm{End}(V_{\tau}))\, |\, F(k_{1}gk_{2})=\tau(k_{2}^{-1})F(g)\tau(k_{1}^{-1}),\, \, x \in G,\, k_{1},\, k_{2}\in K\}$.\\
\end{flushleft}
For any $F\in C_{0}^{\infty}(G,\tau,\tau)$ and $v\in V_{\tau}$, a vector-valued function $f(g)=F(g)v\, (g\in G)$ defines a section of a homogeneous vector bundle $G\times_{K}V_{\tau}$. Since $G=KAK$ (Cartan decomposition), the value of $f$ is determined on $A$. Because of this observation, $f$ is called  a ``radial section'' and $F$ is called a ``radial system''. The convolution product on $C_{0}^{\infty}(G,\tau,\tau)$ is defined by
\begin{align*}
F_{1}*F_{2}(g)=\int_{G}F_{1}(g'^{-1}g)F_{2}(g')dg'.
\end{align*}
In fact, $F_{1}*F_{2}\in C_{0}^{\infty}(G,\tau,\tau)$ because
\begin{eqnarray}
F_{1}*F_{2}(k_{1}gk_{2})&=&\int_{G}F_{1}(g'^{-1}(k_{1}gk_{2}))F_{2}(g')dg' \nonumber \\
&=&\int_{G}F_{1}(g'^{-1}gk_{2})F_{2}(k_{1}g')dg' \nonumber \\
&=&\tau(k_{2}^{-1})\Bigl( \int_{G}F_{1}(g'^{-1}g)F_{2}(g')dg'\Bigr)\tau(k_{1}^{-1}) \nonumber \\
&=&\tau(k_{2}^{-1})F_{1}*F_{2}(g)\tau(k_{1}^{-1}). \nonumber
\end{eqnarray}
Let $\hat{G}(\tau)=\{ U\in\hat{G}\, |\, m(\tau,U)\neq 0\}$. We fix $U\in\hat{G}(\tau)$ and we define the spherical function on $G$ (see \cite[(10)]{ca1}). Let $H_{U}$ be a representation space of $U$, $H_{\tau}\subset H_{U}$ be
the isotypic component of $\tau$ and $P_{\tau}:H_{U}\rightarrow H_{\tau}$ be the orthogonal projection operator. Since $m(\tau,U)=1$, there exists an isomorphism $\iota_{\tau}:V_{\tau}\rightarrow H_{\tau}$. Let $\Phi_{\tau}^{U}:G\rightarrow \mathrm{End}(V_{\tau})$ be
\begin{align*}
\Phi_{\tau}^{U}(g)=\iota_{\tau}^{-1}P_{\tau}U(g)\iota_{\tau}.
\end{align*}
We call this function $\Phi_{\tau}^{U}$ a spherical function of type $\tau$ (\cite{ca1}). Next, we define the spherical transform.
\begin{defi}(\cite[Definition 3.3]{ca1})
The spherical transform of $F\in C_{0}^{\infty}(G,\tau,\tau)$ is the function $\hat{F}:\hat{G}(\tau)\rightarrow \mathbb{C}$ defined by
\begin{align*}
\hat{F}(U)=\frac{1}{\mathrm{dim}V_{\tau}}\int_{G}\mathrm{tr}(\Phi_{\tau}^{U}(g)F(g))dg.
\end{align*}
\end{defi}
The spherical transform preserves products.
\begin{thm}(\cite[After Lemma 3.4]{ca1})\label{product}
For any $F_{1}, F_{2}\in C_{0}^{\infty}(G,\tau,\tau)$, we have
\begin{align*}
\widehat{(F_{1}*F_{2})}(U)=\hat{F_{1}}(U)\hat{F_{2}}(U),\, \,U\in\hat{G}(\tau).
\end{align*}
\end{thm}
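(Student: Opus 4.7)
The plan is to unfold the definition of $\widehat{(F_{1}*F_{2})}(U)$ as a double integral over $G\times G$, perform a change of variable to decouple the two integrals, and identify the inner integral as a $K$-equivariant operator whose value is forced by Schur's lemma (this is precisely where the multiplicity-free hypothesis enters).

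First I would expand
\begin{align*}
\widehat{(F_{1}*F_{2})}(U)=\frac{1}{\dim V_{\tau}}\int_{G}\int_{G}\mathrm{tr}\bigl(\iota_{\tau}^{-1}P_{\tau}U(g)\iota_{\tau}F_{1}(g'^{-1}g)F_{2}(g')\bigr)\,dg\,dg',
\end{align*}
and substitute $g=g'h$ (using left-invariance of $dg$) so that $F_{1}(g'^{-1}g)=F_{1}(h)$ and $U(g)=U(g')U(h)$. The expression then becomes
\begin{align*}
\frac{1}{\dim V_{\tau}}\int_{G}\mathrm{tr}\bigl(\iota_{\tau}^{-1}P_{\tau}U(g')\,T\,F_{2}(g')\bigr)\,dg',
\end{align*}
where I set $T:=\int_{G}U(h)\iota_{\tau}F_{1}(h)\,dh$, an operator from $V_{\tau}$ to $H_{U}$ obtained by folding $F_{1}$ against the representation $U$.

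The decisive step is to recognise that $T$ is $K$-equivariant. Using the defining relation $F_{1}(k^{-1}h)=F_{1}(h)\tau(k)$ together with left-invariance of $dh$, one obtains $T=U(k^{-1})T\tau(k)$, i.e.\ $U(k)T=T\tau(k)$ for all $k\in K$. Since $V_{\tau}$ is irreducible and $m(\tau,U)=1$ by the multiplicity-free hypothesis, Schur's lemma applied to $\mathrm{Hom}_{K}(V_{\tau},H_{U})$ forces $T$ to be a scalar multiple of $\iota_{\tau}$, say $T=c\,\iota_{\tau}$. To identify $c$, I would evaluate the trace of $\iota_{\tau}^{-1}P_{\tau}T$ in two ways: on the one hand this equals $c\,\dim V_{\tau}$ because $P_{\tau}\iota_{\tau}=\iota_{\tau}$ and $\iota_{\tau}^{-1}\iota_{\tau}=\mathrm{id}_{V_{\tau}}$; on the other hand, pushing the trace inside the integral yields $\int_{G}\mathrm{tr}(\Phi_{\tau}^{U}(h)F_{1}(h))\,dh=(\dim V_{\tau})\hat{F_{1}}(U)$. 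Hence $c=\hat{F_{1}}(U)$.

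Substituting $T=\hat{F_{1}}(U)\,\iota_{\tau}$ back pulls the scalar outside, and the remaining integrand is $\mathrm{tr}(\Phi_{\tau}^{U}(g')F_{2}(g'))$, so the integral reproduces $\hat{F_{2}}(U)$ after dividing by $\dim V_{\tau}$, giving the desired identity. The only nontrivial ingredient is the invocation of Schur's lemma, and this is exactly what the multiplicity-free assumption from Section \ref{ST} is designed to enable: without $m(\tau,U)=1$ the operator $T$ would be constrained only up to a matrix rather than a scalar, and the transform would no longer be multiplicative as a scalar-valued function.
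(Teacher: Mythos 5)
Your argument is correct. Note first that the paper does not prove this statement at all: it is quoted from Camporesi, who obtains it from the functional equation for the $\tau$-spherical function (his Lemma 3.4), itself a projection/Schur-type identity $\Phi_{\tau}^{U}(x)\Phi_{\tau}^{U}(y)=\dim V_{\tau}\int_{K}\overline{\chi_{\tau}(k)}\,\Phi_{\tau}^{U}(xky)\,dk$. Your route is a direct one: unfold the double integral, fold $F_{1}$ into the operator $T=\int_{G}U(h)\iota_{\tau}F_{1}(h)\,dh\in\mathrm{Hom}_{K}(V_{\tau},H_{U})$, and use $m(\tau,U)=1$ to force $T=\hat{F_{1}}(U)\,\iota_{\tau}$; the normalization $c=\hat{F_{1}}(U)$ via $\mathrm{tr}(\iota_{\tau}^{-1}P_{\tau}T)$ is right, and the change of variables, Fubini, and the pulling of bounded operators through the (Bochner) integral are all unproblematic since $F_{1},F_{2}$ are compactly supported and the trace is over the finite-dimensional $V_{\tau}$. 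The two arguments are really the same mechanism (multiplicity one plus Schur) packaged differently: Camporesi's functional equation isolates the representation-theoretic content once and for all, while your version applies it directly to the transform and is arguably more self-contained. One small point you use implicitly: $\iota_{\tau}$ must be chosen as a $K$-intertwiner ($U(k)\iota_{\tau}=\iota_{\tau}\tau(k)$), not merely a linear isomorphism as the paper's wording suggests; this is how Camporesi sets it up and it is exactly what makes both your Schur step ($T\in\mathbb{C}\iota_{\tau}$) and the whole spherical-function formalism work.
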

The spherical transform has an inverse map. To describe this, we recall the Plancherel measure on $\hat{G}$. For any $f\in C_{0}(G)$ and $U\in\hat{G}$, let $U(f)$ be the linear operator on $H_{U}$ defined by
\begin{align*}
U(f)=\int_{G}f(g)U(g)dg
\end{align*}
(see \cite[Section 4.1]{wa1}).
\begin{thm}(cf. \cite[Theorem 7.2.1.1]{wa2})
There exists the measure $d\mu$ on $\hat{G}$ uniquely such that
\begin{align*}
\int_{G}|f(g)|^2dg=\int_{\hat{G}}\mathrm{tr}(U(f)U(f)^{*})d\mu(U),\, \, f\in L^1(G,dg)\cap L^2(G,dg).
\end{align*}
\end{thm}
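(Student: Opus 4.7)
The plan is to reduce the statement to the abstract Plancherel theorem for unimodular Type I locally compact groups, via two inputs: the structural fact that $G$ is Type I, and a measure-theoretic direct integral decomposition of the left regular representation.

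First I would invoke Harish-Chandra's theorem that every connected semisimple Lie group with finite center is Type I in the sense of Mackey. This ensures that $\hat{G}$ carries a standard Borel structure and that every unitary representation of $G$ admits an essentially unique disintegration as a direct integral of irreducibles over $\hat{G}$.

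Next I would apply the abstract Plancherel theorem for second countable unimodular Type I groups (due essentially to Segal, Mautner, and Dixmier), which produces a positive Borel measure $\mu$ on $\hat{G}$ together with a unitary isomorphism
\begin{align*}
L^2(G,dg)\cong\int_{\hat{G}}^{\oplus}\mathrm{HS}(H_{U})\, d\mu(U),
\end{align*}
realized on $L^1(G,dg)\cap L^2(G,dg)$ by the operator-valued Fourier transform $f\mapsto (U(f))_{U\in\hat{G}}$, where $\mathrm{HS}(H_{U})$ denotes the Hilbert-Schmidt operators on $H_{U}$. Under this isomorphism $U(f)$ is Hilbert-Schmidt for $\mu$-almost every $U$, and the norm-preservation identity becomes exactly the claimed Plancherel formula once one writes $\mathrm{tr}(U(f)U(f)^{*})=||U(f)||_{\mathrm{HS}}^{2}$. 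Uniqueness of $\mu$ follows by a density argument: any second measure $\mu'$ validating the identity would give $\int_{\hat{G}}||U(f)||_{\mathrm{HS}}^{2}\, d\mu(U)=\int_{\hat{G}}||U(f)||_{\mathrm{HS}}^{2}\, d\mu'(U)$ for all $f\in L^1(G,dg)\cap L^2(G,dg)$, and varying $f$ through a family that separates points of $\hat{G}$ (for instance, matrix coefficients of varying $K$-types) forces $\mu=\mu'$.

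The main obstacle is the Type I property rather than the subsequent abstract argument. Establishing Type I for semisimple Lie groups requires substantive harmonic-analytic input, such as the semifiniteness of the enveloping von Neumann algebra and the existence of a rich supply of traceable representations, whereas once Type I is granted the Plancherel decomposition is an essentially formal consequence of direct integral theory for von Neumann algebras. For the later application to $G=SL(2,\mathbb{R})$ one will in addition need the explicit form of $d\mu$ furnished by Harish-Chandra's Plancherel formula, but at the level of existence and uniqueness asserted here the abstract route suffices, and this is why the author is content to quote the result from \cite{wa2}.
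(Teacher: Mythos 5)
The paper offers no proof of this statement at all---it simply quotes Warner's abstract Plancherel theorem for second countable unimodular Type I groups---and your outline (Harish-Chandra's theorem that connected semisimple Lie groups with finite center are Type I, followed by the Segal--Mautner--Dixmier direct integral decomposition $L^2(G,dg)\cong\int_{\hat{G}}^{\oplus}\mathrm{HS}(H_{U})\,d\mu(U)$ realized by $f\mapsto(U(f))_{U}$) is exactly the standard argument behind that citation, so you are taking the same route. Your supplementary density argument for uniqueness is too loose as stated (equality of $\int\|U(f)\|_{\mathrm{HS}}^{2}\,d\mu$ and $\int\|U(f)\|_{\mathrm{HS}}^{2}\,d\mu'$ over a point-separating family of $f$ does not by itself force $\mu=\mu'$), but it is also dispensable, since uniqueness of the Plancherel measure relative to the fixed Haar measure is already part of the abstract theorem you invoke.
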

Such $d\mu$ is called the Plancherel measure on $\hat{G}$.
\begin{thm}(\cite[Theorem3.9]{ca1})
For any $F\in C_{0}^{\infty}(G,\tau,\tau)$, we have
\begin{align*}
F(g)=\frac{1}{\mathrm{dim}V_{\tau}}\int_{\hat{G}(\tau)}\Phi_{\tau}^{U}(g^{-1})\hat{F}(U)d\mu(U).
\end{align*}
\end{thm}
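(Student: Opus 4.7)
The plan is to reduce the matrix-valued inversion formula to the scalar Plancherel inversion just stated, using the bi-$\tau$-equivariance of $F$ together with the multiplicity-free hypothesis $m(\tau,U)\leq 1$ to identify the operator-valued Fourier transform of $F$ with an explicit multiple of $\Phi_{\tau}^{U}$.

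First I would decompose $F$ into scalar matrix entries: fix an orthonormal basis $\{v_{1},\ldots,v_{d}\}$ of $V_{\tau}$ with $d=\mathrm{dim}V_{\tau}$, and set $F_{ij}(g):=\langle F(g)v_{j},v_{i}\rangle_{V_{\tau}}\in C_{0}^{\infty}(G)\subset L^{1}(G)\cap L^{2}(G)$. The scalar Plancherel inversion from the preceding theorem (applied in the form $f(g)=\int_{\hat{G}}\mathrm{tr}(U(g^{-1})U(f))d\mu(U)$, which follows from it by polarization and the standard translation trick) gives
\begin{align*}
F_{ij}(g)=\int_{\hat{G}}\mathrm{tr}\bigl(U(g^{-1})U(F_{ij})\bigr)d\mu(U).
\end{align*}
I would then assemble these entries into the operator-valued transform $U(F):=\int_{G}F(g)\otimes U(g)dg\in\mathrm{End}(V_{\tau})\otimes\mathrm{End}(H_{U})$ and translate the bi-equivariance of $F$ into the identity $U(F)=(\tau(k_{2}^{-1})\otimes U(k_{1}))\cdot U(F)\cdot(\tau(k_{1}^{-1})\otimes U(k_{2}))$ for all $k_{1},k_{2}\in K$, obtained by substituting $g\mapsto k_{1}gk_{2}$ in the integral. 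Viewed as a linear map $V_{\tau}\otimes H_{U}\to V_{\tau}\otimes H_{U}$, this says $U(F)$ is a $K\times K$-intertwiner with respect to actions involving $\tau$ and $U|_{K}$, forcing its image in the $H_{U}$-factor to land in the $\tau$-isotypic component $H_{\tau}$.

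Now the multiplicity-free hypothesis becomes decisive. If $U\notin\hat{G}(\tau)$, then $H_{\tau}=\{0\}$ and so $U(F)=0$; these representations contribute nothing to the Plancherel integral. If $U\in\hat{G}(\tau)$, then $\iota_{\tau}:V_{\tau}\to H_{\tau}$ is a $K$-isomorphism and Schur's lemma applied on both factors reduces $U(F)$ to a one-parameter family: explicitly, $U(F)$ must be a scalar multiple $\lambda(U)$ of the universal operator built from $\iota_{\tau}$ and $P_{\tau}$. A direct pairing against $\Phi_{\tau}^{U}$, combined with the cyclicity of the trace and the definition $\Phi_{\tau}^{U}(g)=\iota_{\tau}^{-1}P_{\tau}U(g)\iota_{\tau}$, identifies $\lambda(U)=\hat{F}(U)$. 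Substituting back and computing $\mathrm{tr}(U(g^{-1})U(F_{ij}))$ in terms of the matrix entries of $\Phi_{\tau}^{U}(g^{-1})$ and reassembling $F(g)=\sum_{i,j}F_{ij}(g)E_{ij}$ yields the claimed formula, with the factor $1/\mathrm{dim}V_{\tau}$ emerging from the Schur normalization in the identification of $\lambda(U)$.

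The principal obstacle I anticipate is the bookkeeping in the last step: one must carefully track both sides of the $K$-equivariance through the operator-valued Fourier transform and pin down the normalization of $\lambda(U)$ so that it coincides exactly with $(1/\mathrm{dim}V_{\tau})\int_{G}\mathrm{tr}(\Phi_{\tau}^{U}(g)F(g))dg$ rather than off by a dimensional factor. Conceptually this is nothing but Schur orthogonality, but the combination of the $\mathrm{End}(V_{\tau})$-valued integrand, the two-sided equivariance, the projection $P_{\tau}$, and the isomorphism $\iota_{\tau}$ demands careful attention to get the constant right.
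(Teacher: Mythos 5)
The paper gives no proof of this statement: it is quoted verbatim from Camporesi (\cite[Theorem 3.9]{ca1}), so there is no internal argument to compare yours with. Measured against the standard proof (essentially Camporesi's), your plan is correct and is the same route: reduce to matrix entries, apply the operator-valued Fourier inversion, and use the two-sided $K$-equivariance plus multiplicity one to see that $U(F_{ij})$ vanishes for $U\notin\hat{G}(\tau)$ and otherwise equals a scalar times the operator $h\mapsto\langle \iota_{\tau}^{-1}P_{\tau}h,v_{i}\rangle\,\iota_{\tau}v_{j}$; pairing against $\Phi_{\tau}^{U}$ identifies that scalar as $\hat{F}(U)/\mathrm{dim}V_{\tau}$, and since $\mathrm{tr}\bigl(U(g^{-1})\,[h\mapsto\langle \iota_{\tau}^{-1}P_{\tau}h,v_{i}\rangle\,\iota_{\tau}v_{j}]\bigr)=\langle\Phi_{\tau}^{U}(g^{-1})v_{j},v_{i}\rangle$, the entries reassemble to exactly the stated formula, with no stray dimensional factor (and independently of the choice of $\iota_{\tau}$, which is unique up to a scalar by Schur). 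Two points need more care than your sketch allows. First, the pointwise inversion $f(g)=\int_{\hat{G}}\mathrm{tr}(U(g^{-1})U(f))\,d\mu(U)$ for $f\in C_{0}^{\infty}(G)$ is not a formal polarization-plus-translation consequence of the $L^{2}$ Plancherel identity quoted in the paper; it is Harish-Chandra's Fourier inversion formula (one needs $U(f)$ trace class and absolute convergence of the trace integral over $\hat{G}$), so it should be cited as such. Second, the Schur-lemma step takes place inside the infinite-dimensional $\mathrm{End}(H_{U})$; you should invoke the $K$-admissibility of $H_{U}$ (discrete decomposition under $K$ with finite multiplicities) to conclude that the space of $K\times K$-equivariant assignments $(v^{*},v)\mapsto\int_{G}\langle v^{*},F(g)v\rangle U(g)\,dg$ is one-dimensional, spanned by the operator above.
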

In addition, the spherical transform is a unitary map.
\begin{thm}(\cite[Corollary 3.10]{ca1})\label{plancherel}
The spherical transform extends to a unitary map $L^2(G,\tau,\tau)\rightarrow L^2(\hat{G}(\tau),d\mu)$.
\end{thm}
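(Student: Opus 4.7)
The plan is to derive the Plancherel identity on the dense subspace $C_0^\infty(G,\tau,\tau)$ by combining Theorem \ref{product} (the convolution property) with the inversion formula stated just above, evaluating a suitable convolution at the identity, and then to extend by continuity and invoke density of the image.

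First, introduce the involution $F^{*}(g) := F(g^{-1})^{*}$, where the outer $*$ is the adjoint on $\mathrm{End}(V_\tau)$. A short check using the bi-$\tau$-equivariance shows $F^{*}\in C_0^\infty(G,\tau,\tau)$. A matrix-coefficient calculation, using unitarity of $U$ together with unitarity of $\iota_\tau$ and self-adjointness of $P_\tau$, yields $\Phi_\tau^{U}(g^{-1}) = \Phi_\tau^{U}(g)^{*}$, and consequently $\widehat{F^{*}}(U) = \overline{\hat{F}(U)}$. Hence Theorem \ref{product} gives $\widehat{F*F^{*}}(U) = |\hat{F}(U)|^{2}$. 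Now compute $F*F^{*}(e)$ in two ways: the direct evaluation (with the substitution $h = g^{-1}$ and unimodularity of $G$) produces $\int_{G} F(h) F(h)^{*}\,dh$, while the inversion formula combined with $\Phi_\tau^{U}(e) = \mathrm{id}_{V_\tau}$ produces $\frac{1}{\dim V_\tau}\int_{\hat{G}(\tau)} |\hat{F}(U)|^{2}\,d\mu(U)\cdot \mathrm{id}_{V_\tau}$. Taking traces of both expressions yields
\begin{equation*}
\int_{G} \mathrm{tr}(F(g)F(g)^{*})\,dg \;=\; \int_{\hat{G}(\tau)} |\hat{F}(U)|^{2}\,d\mu(U),
\end{equation*}
so with the Hilbert--Schmidt $L^{2}$ norm on the domain, the spherical transform is an isometry on $C_0^\infty(G,\tau,\tau)$.

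Next I would extend by continuity: since $C_0^\infty(G,\tau,\tau)$ is dense in $L^{2}(G,\tau,\tau)$ (by mollifying an arbitrary $L^{2}$-section and then averaging over $K\times K$ against $\tau$ to land in the bi-$\tau$-equivariant space), the transform extends to an isometry on all of $L^{2}(G,\tau,\tau)$. Isometries have closed range, so unitarity reduces to density of the image. For this, given $\phi \in C_c(\hat{G}(\tau))$, set $F_\phi(g) := \frac{1}{\dim V_\tau}\int_{\hat{G}(\tau)} \Phi_\tau^{U}(g^{-1})\phi(U)\,d\mu(U)$, and check via the same convolution-and-inversion mechanism that $\hat{F}_\phi = \phi$, so $C_c(\hat{G}(\tau))$ lies in the image.

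The step I expect to be most delicate is the density-of-image claim: realizing $F_\phi$ as an element of $L^{2}(G,\tau,\tau)$ requires control on the growth of the spherical functions $\Phi_\tau^{U}$ and integrability against the Plancherel measure $d\mu$ on $\hat{G}(\tau)$. A cleaner alternative that sidesteps this is to relate $\hat{F}(U)$ to the operator $U(F) := \int_{G} F(g)\otimes U(g)\,dg$: under the multiplicity-one hypothesis, the action of $U(F)$ on the $\tau$-isotypic component is forced to be a scalar multiple of $\hat{F}(U)$ times the identity, and Theorem \ref{plancherel} then follows from the general group-theoretic Plancherel theorem already recalled above.
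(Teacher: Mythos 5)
The paper itself offers no proof of this statement: it is quoted verbatim from Camporesi \cite[Corollary 3.10]{ca1}, so your argument can only be judged on its own terms. The isometry half of your proposal is correct and is a clean derivation from the two quoted ingredients. The involution $F^{*}(g)=F(g^{-1})^{*}$ does stay in $C_{0}^{\infty}(G,\tau,\tau)$; the identity $\Phi_{\tau}^{U}(g^{-1})=\Phi_{\tau}^{U}(g)^{*}$ is legitimate once you normalize $\iota_{\tau}$ to be an isometric intertwiner, which you may do because by Schur's lemma $\Phi_{\tau}^{U}$ is independent of the choice of $\iota_{\tau}$; hence $\widehat{F^{*}}(U)=\overline{\hat{F}(U)}$, and evaluating $F*F^{*}$ at $e$ once directly and once by the inversion formula (valid there, since $F*F^{*}\in C_{0}^{\infty}(G,\tau,\tau)$ and $\Phi_{\tau}^{U}(e)=\mathrm{id}$) gives $\int_{G}\mathrm{tr}(F(g)F(g)^{*})\,dg=\int_{\hat{G}(\tau)}|\hat{F}(U)|^{2}\,d\mu(U)$, i.e.\ the isometry on a dense subspace, up to the obvious normalization convention for the norm on $L^{2}(G,\tau,\tau)$.

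The genuine gap is surjectivity, exactly where you suspected. The wave packet $F_{\phi}$ for $\phi\in C_{c}(\hat{G}(\tau))$ is not compactly supported, and spherical functions of type $\tau$ are not square-integrable (on $SL(2,\mathbb{R})$ they decay roughly like $e^{-s/2}$ against a Haar density growing like $e^{s}$), so $F_{\phi}\in L^{2}(G,\tau,\tau)$ is not automatic; for merely continuous $\phi$ there is no smoothness in the spectral parameter to integrate by parts against, so the usual wave-packet decay estimates are unavailable. More seriously, the claim $\hat{F}_{\phi}=\phi$ cannot be verified by the same convolution-and-inversion mechanism: the inversion formula is only stated for $C_{0}^{\infty}(G,\tau,\tau)$, and computing $\hat{F}_{\phi}$ termwise would require orthogonality relations between $\Phi_{\tau}^{U}$ and $\Phi_{\tau}^{U'}$ integrated over $G$, which is essentially the Plancherel theorem you are trying to prove. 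Your fallback via the operator-valued transform $U(F)$ and multiplicity one is the right idea (and close to how the cited source actually proceeds), but it does not follow from the abstract Plancherel statement as recalled in Section 5, which is only the isometry identity $\|f\|_{2}^{2}=\int_{\hat{G}}\mathrm{tr}(U(f)U(f)^{*})\,d\mu(U)$. Surjectivity needs the full Harish-Chandra Plancherel theorem, namely that $f\mapsto(U(f))_{U}$ extends to a unitary map of $L^{2}(G,dg)$ \emph{onto} the direct integral $\int_{\hat{G}}^{\oplus}\mathrm{HS}(H_{U})\,d\mu(U)$; with that stronger input cited, multiplicity-freeness identifies the image of the bi-$\tau$-equivariant part with $L^{2}(\hat{G}(\tau),d\mu)$ and your argument closes. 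Without it, the extension you construct is only a not-necessarily-surjective isometry.
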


\section{The Helgason-Fourier transforms}

\subsection{Principle series representations}\label{Principle series}
First, we discuss parabolic subgroups according to \cite[Section 7.7]{knapp}. Let $G$ be a linear connected reductive group, $K$ be the compact subgroup, $\Theta$ be the global Cartan involution, $B$ be a nondegenerate bilinear form on $\mathfrak{g}$ and $\mathfrak{g}=\mathfrak{k}\oplus\mathfrak{p}$ be the Caran decomposition. Let $\mathfrak{a}\subset\mathfrak{p}$ be a maximal abelian subspace, $\Sigma(\mathfrak{g},\mathfrak{a})$ be the set of restricted roots, $\Sigma^{+}(\mathfrak{g},\mathfrak{a})$ be a set of positive restricted roots with some linear order, $\mathfrak{m}\subset\mathfrak{k}$ be the centralizer of $\mathfrak{a}$ in $\mathfrak{k}$, and $\mathfrak{n}=\oplus_{\lambda\in\Sigma^{+}(\mathfrak{g},\mathfrak{a})}\mathfrak{g}_{\lambda}$. Let  $A$ and $N$ be the analytic subgroups of $G$ corresponding to $\mathfrak{a}$ and $\mathfrak{n}$ respectively, and $M\subset G$ be the centralizer of $A$. Then, $\mathfrak{m}=\mathrm{Lie}(M)$ and we get an Iwasawa decomposition $G=KAN$ (see \cite[Proposition 7.31]{knapp}) and a closed subgroup $Q=MAN$ called a minimal parabolic subgroup (see \cite[After Proposition 7.31]{knapp}). Let $\mathfrak{q}=\mathfrak{m}\oplus\mathfrak{a}\oplus\mathfrak{n}$. A Lie subalgebra $\mathfrak{q}'\subset\mathfrak{g}$ is called a $\mathfrak{q}$-parabolic subalgebra if $\mathfrak{q}\subset\mathfrak{q}'$. We call it just a parabolic subalgebra for simplicity in what follows. Let $\Pi(\mathfrak{g},\mathfrak{a})$ be the set of simple roots.
\begin{prop}(cf. \cite[Proposition 7.76]{knapp})
Let $\mathfrak{q}'$ be a parabolic subalgebra. Then, there exists a subset $\Pi'\subset\Pi(\mathfrak{g},\mathfrak{a})$ such that
\begin{align*}
\mathfrak{q}'=\mathfrak{m}\oplus\mathfrak{a}\oplus(\bigoplus_{\beta\in\Gamma}\mathfrak{g}_{\beta}),
\end{align*}
where $\Gamma=\Sigma^{+}(\mathfrak{g},\mathfrak{a})\cup(\Sigma(\mathfrak{g},\mathfrak{a})\cap\mathrm{span}(\Pi'))$.
\end{prop}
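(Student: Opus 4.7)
My plan is to translate the statement into a combinatorial assertion about restricted roots via a weight-space analysis under $\mathrm{ad}(\mathfrak{a})$. Since $\mathfrak{a} \subset \mathfrak{q} \subset \mathfrak{q}'$, the subalgebra $\mathfrak{q}'$ is stable under $\mathrm{ad}(\mathfrak{a})$, and decomposing via the joint eigenspaces of $\mathfrak{a}$ yields
\begin{equation*}
\mathfrak{q}' = (\mathfrak{m} \oplus \mathfrak{a}) \oplus \bigoplus_{\lambda \in \Sigma(\mathfrak{g},\mathfrak{a})}(\mathfrak{q}' \cap \mathfrak{g}_{\lambda}),
\end{equation*}
where the zero-weight piece is $\mathfrak{m} \oplus \mathfrak{a}$ because $\mathfrak{m} \oplus \mathfrak{a} \subset \mathfrak{q} \subset \mathfrak{q}'$. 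Since $\mathfrak{n} = \bigoplus_{\lambda \in \Sigma^{+}(\mathfrak{g},\mathfrak{a})} \mathfrak{g}_{\lambda} \subset \mathfrak{q}'$, every positive restricted root space is already fully contained in $\mathfrak{q}'$, and only the negative root contributions remain to be determined.

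Next I would establish the dichotomy that each intersection $\mathfrak{q}' \cap \mathfrak{g}_{\lambda}$ is either $0$ or all of $\mathfrak{g}_{\lambda}$. Because $\mathfrak{m} \subset \mathfrak{q}'$ and $\mathfrak{m}$ centralizes $\mathfrak{a}$, one has $[\mathfrak{m}, \mathfrak{g}_{\lambda}] \subset \mathfrak{g}_{\lambda}$, so the intersection is an $\mathfrak{m}$-submodule of $\mathfrak{g}_{\lambda}$. I would then invoke the standard irreducibility of each restricted root space as an $\mathfrak{m}$-module (or, as a more self-contained substitute, escalate a single nonzero $X \in \mathfrak{g}_{\lambda} \cap \mathfrak{q}'$ to all of $\mathfrak{g}_{\lambda}$ via the $\mathfrak{sl}_{2}$-triple generated by $X$ and $\theta X \in \mathfrak{g}_{-\lambda} \subset \mathfrak{n} \subset \mathfrak{q}'$) to force the dichotomy. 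Writing $\Gamma := \{\lambda \in \Sigma(\mathfrak{g},\mathfrak{a}) : \mathfrak{g}_{\lambda} \subset \mathfrak{q}'\}$, this gives $\mathfrak{q}' = \mathfrak{m} \oplus \mathfrak{a} \oplus \bigoplus_{\lambda \in \Gamma} \mathfrak{g}_{\lambda}$ with $\Sigma^{+}(\mathfrak{g},\mathfrak{a}) \subset \Gamma$.

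Finally I would identify $\Gamma$. Setting $\Pi' := \{\alpha \in \Pi(\mathfrak{g},\mathfrak{a}) : -\alpha \in \Gamma\}$, the target equality reduces to the purely root-theoretic claim that $\Gamma \cap \Sigma^{-}(\mathfrak{g},\mathfrak{a})$ consists of exactly those negative roots whose simple decomposition involves only elements of $\Pi'$. The inclusion $\supseteq$ follows by iteratively bracketing root vectors from $\mathfrak{g}_{\pm\alpha}$ with $\alpha \in \Pi'$ (all of which lie in $\mathfrak{q}'$) to produce a nonzero element of $\mathfrak{g}_{\beta} \cap \mathfrak{q}'$ for each $\beta \in \Sigma \cap \mathrm{span}(\Pi')$, then applying the dichotomy. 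The inclusion $\subseteq$ proceeds by descent on the height of $\beta$: if $\beta \in \Gamma \cap \Sigma^{-}$ is not simple, one picks a simple $\alpha$ with $\beta + \alpha$ again a negative root, uses $\mathfrak{g}_{\alpha} \subset \mathfrak{n} \subset \mathfrak{q}'$ to obtain $\mathfrak{g}_{\beta+\alpha} \cap \mathfrak{q}' \neq 0$ and hence $\beta + \alpha \in \Gamma$, and closes the induction after tracking which simple roots must appear in $\Pi'$.

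The principal obstacle is the dichotomy of the second paragraph: the $\mathfrak{m}$-irreducibility claim is transparent when the restricted root system is reduced but requires slightly more care in the non-reduced type $BC$ setting, where the sketched $\mathfrak{sl}_{2}$-triple escalation is preferable. The combinatorial identification in the third paragraph is then routine, provided one is careful that the intermediate brackets produce genuinely nonzero elements in the corresponding root spaces.
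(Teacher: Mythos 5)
The paper does not prove this proposition at all: it is quoted from Knapp (Proposition 7.76), so your sketch has to stand on its own, and its central step does not. The heart of the matter is precisely your ``dichotomy'' that $\mathfrak{q}'\cap\mathfrak{g}_{\lambda}$ is $0$ or all of $\mathfrak{g}_{\lambda}$, and neither of the two routes you offer establishes it. The $\mathfrak{m}$-irreducibility of a restricted root space is not a quotable textbook fact, and it is essentially equivalent in strength to the dichotomy you want: if $\alpha$ is a simple restricted root with $2\alpha\notin\Sigma$ and $W\subset\mathfrak{g}_{-\alpha}$ were a proper nonzero $\mathfrak{m}$-submodule, then $\mathfrak{q}\oplus W$ would already be a subalgebra containing $\mathfrak{q}$ (all brackets $[\mathfrak{g}_{\mu},W]$ with $\mu\in\Sigma^{+}$ land in weights $\geq 0$ because $\alpha$ is simple, and $[W,W]=0$), i.e.\ a counterexample to the very proposition being proved. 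So invoking irreducibility as ``standard'' is close to circular, and in the non-reduced case you concede it yourself. The fallback via the triple $\{X,\theta X,[X,\theta X]\}$ also fails: $[X,\theta X]$ lies in $\mathfrak{a}\subset\mathfrak{q}'$, so the triple gives nothing new, and knowing that $\mathfrak{q}'$ is a module under this $\mathfrak{sl}_{2}$ imposes no constraint forcing $\mathfrak{g}_{\lambda}\subset\mathfrak{q}'$; the raising and lowering operators map $\mathfrak{g}_{\lambda}$ into $\mathfrak{g}_{2\lambda}$ and $\mathfrak{g}_{0}$, and starting from $X$ you can only manufacture elements of $\mathfrak{g}_{\lambda}$ that are already brackets of things known to lie in $\mathfrak{q}'$, never a vector of $\mathfrak{g}_{\lambda}$ outside the $\mathfrak{m}$-module generated by $X$.

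This is exactly the difficulty that the standard proof (the one behind the citation to Knapp) avoids by passing to the complexification: a subalgebra containing the minimal parabolic complexifies to a subalgebra of $\mathfrak{g}_{\mathbb{C}}$ containing a Borel subalgebra, where every root space is one-dimensional, the dichotomy is automatic, and the classification by subsets of simple roots is the classical complex statement; one then descends to the restricted-root description. If you want a direct real-group argument you must genuinely prove the dichotomy (equivalently, handle the $\mathfrak{m}$-module structure of $\mathfrak{g}_{\lambda}$), not assume it. A secondary, smaller gap you correctly flag but do not close: your combinatorial step needs that $[\mathfrak{g}_{\lambda},\mathfrak{g}_{\mu}]\neq 0$ whenever $\lambda,\mu,\lambda+\mu\in\Sigma(\mathfrak{g},\mathfrak{a})$, which is true for restricted roots but is itself a lemma requiring proof (the usual one-dimensionality argument is unavailable). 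Given these two inputs, your third paragraph (defining $\Pi'$, the height induction, and the observation that $-\alpha=\beta+(-(\beta+\alpha))$ lies in $\Gamma$) does go through.
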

We put
\begin{eqnarray}
\mathfrak{a}'&=&\bigcap_{\beta\in\Gamma\cap(-\Gamma)}\mathrm{Ker}\beta, \nonumber \\
\mathfrak{m}'&=&\mathfrak{m}\oplus\mathfrak{a}'^{\perp}\oplus(\bigoplus_{\beta\in\Gamma\cap(-\Gamma)}\mathfrak{g}_{\beta}), \nonumber \\
\mathfrak{n}'&=&\bigoplus_{\beta\in\Gamma\setminus(-\Gamma)}\mathfrak{g}_{\beta}.\nonumber
\end{eqnarray}
Let $A'$ and $N'$ be analytic subgroups such that $\mathrm{Lie}(A')=\mathfrak{a'}$ and $\mathrm{Lie}(N')=\mathfrak{n'}$. Let $\mathfrak{z}(\mathfrak{a}')\subset \mathfrak{g}$ be the centralizer of $\mathfrak{a}'$, $Z(A')\subset G$ be the centralizer of $A'$, $M'_{ss}\subset G$ be the analytic subgroup corresponding to $[\mathfrak{z}(\mathfrak{a}'),\mathfrak{z}(\mathfrak{a}')]$ and $M'$ be the group $(K\cap Z(A'))M'_{ss}$. Then we have $\mathfrak{q'}=\mathfrak{m'}\oplus\mathfrak{a'}\oplus\mathfrak{n'}$ (see \cite[After Proposition 7.76]{knapp}) and $Q'=M'A'N'$ is a closed subgroup of $G$ (see \cite[After Corollary 7.81]{knapp}). These decompositions are called the Langlands decompositions. Moreover, we call $Q'$ a parabolic subgroup. Let $\theta=d\Theta$. A parabolic subgroup $Q'=M'A'N'$ is called cuspidal if there is a $\theta$ stable compact Cartan subalgebra in $\mathfrak{m}'$.

Next, we discuss induced representations according to \cite[Section 7.1]{knapp2}. We put $(\mathfrak{a}')^{*}=\mathrm{Hom}_{\mathbb{R}}(\mathfrak{a}',\mathbb{R})$ and $(\mathfrak{a}'_{\mathbb{C}})^{*}=\mathrm{Hom}_{\mathbb{R}}(\mathfrak{a}',\mathbb{C})$. We take $\sigma'\in\hat{M'}$, $\nu'\in(\mathfrak{a}'_{\mathbb{C}})^{*}$ and $\rho_{\mathfrak{a}'}=\frac{1}{2}\sum_{\alpha\in\Sigma^{+}(\mathfrak{g},\mathfrak{a}')}(\mathrm{dim}\mathfrak{g}_{\alpha})\alpha$. Let $V_{\sigma'}$ be a representation space of $\sigma'$, $\mathscr{H}(Q',\sigma',\nu')$ be
the space of $V_{\sigma'}$-valued measurable functions $F$ such that $F(gm'a'n')=e^{-(\nu'+\rho_{\mathfrak{a}'})(\mathrm{log}a)}\sigma'(m)^{-1}F(g)$ for a.e. $g\in G$, $m'\in M'$, $a'\in A'$ and $n'\in N'$ with $||F||^2:=\int_{K}|F(k)|^2dk<\infty$. The induced representation $U(Q',\sigma',\nu')=\mathrm{ind}_{Q'}^{G}(\sigma'\otimes e^{\nu'}\otimes 1)$ is defined on $\mathscr{H}(Q',\sigma',\nu')$ by 
\begin{align*}
U(Q',\sigma',\nu')(g)F(h)=F(g^{-1}h),\, \, g, h\in G.
\end{align*}
Then, the pair $(U(Q',\sigma',\nu'),\mathscr{H}(Q',\sigma',\nu'))$ is a continuous representation of $G$ (see \cite[Section 7.2]{knapp2}), which is called principle series. It is known that $U(Q',\sigma',\nu')$ is unitary if $\nu'$ is pure imaginary valued on $\mathfrak{a}'$. Moreover, if $\sigma'$ has a real infinitesimal character and no root in $\Sigma(\mathfrak{g},\mathfrak{a}')$ is orthogonal to $\nu'$, then $U(Q',\sigma',\nu')$ is irreducible (see \cite[Theorem 14.93]{knapp2}).

\subsection{The Helgason-Fourier transforms}\label{helgfourier}
First, we discuss the Helgason-Fourier transforms. We assume that $G$ is a semisimple Lie group having a multiplicity-free subgroup $K$ and that $Q'$ is cuspidal. We take $A_{1}, N_{1}\subset G$ so that $A=A'A_{1}$ and $N=N'N_{1}$ as in \cite[Proposition 7.14]{knapp2}. It is known that $MA_{1}N_{1}$ is a minimal parabolic subgroup of $M'$ (see \cite[Section 8.10]{knapp2}).
\begin{prop}(\cite[Proposition 4.1]{ca2})
Let $\sigma'\in\hat{M'}$ be a discrete series representation (see \cite[Theorem 8.51]{knapp2}). Then, there exists $\tilde{\sigma}'\in\hat{M}$ and $\mu_{1}\in\mathfrak{a}_{1}^{*}$ such that $\sigma'$ is infinitesimally equivalent with a subrepresentation of $\mathrm{ind}_{MA_{1}N_{1}}^{M'}(\tilde{\sigma'}\otimes e^{\mu_{1}}\otimes 1)$.
\end{prop}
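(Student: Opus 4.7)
The plan is to invoke Casselman's subrepresentation theorem for the reductive Lie group $M'$, using that, by the remark immediately preceding the proposition, $MA_{1}N_{1}$ is a minimal parabolic subgroup of $M'$. A discrete series representation $\sigma'\in\hat{M'}$ is in particular irreducible and admissible, so its underlying $(\mathfrak{m}',K\cap M')$-module is an irreducible admissible Harish-Chandra module to which the theorem applies.

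The key step is then to quote the subrepresentation theorem (cf.\ \cite{knapp2}) to produce an embedding
\begin{align*}
(\sigma')_{K\cap M'}\hookrightarrow \mathrm{ind}_{MA_{1}N_{1}}^{M'}(\tilde{\sigma}'\otimes e^{\mu_{1}}\otimes 1)_{K\cap M'}
\end{align*}
of Harish-Chandra modules, for some $\tilde{\sigma}'\in\hat{M}$ and some $\mu_{1}\in(\mathfrak{a}_{1,\mathbb{C}})^{*}$. Since an injection of irreducible Harish-Chandra modules is, by definition, an infinitesimal equivalence between $\sigma'$ and the closed invariant subspace obtained as the Hilbert-space closure of the image inside the principal series, this immediately yields the asserted conclusion up to refining the parameter $\mu_{1}$.

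To place $\mu_{1}$ in the real dual $\mathfrak{a}_{1}^{*}$ as asserted, I would appeal to the Langlands classification of tempered representations applied inside $M'$: since $\sigma'$ is a discrete series, it is tempered, and its Langlands parameters with respect to the minimal parabolic $MA_{1}N_{1}$ consist of some $\tilde{\sigma}'\in\hat{M}$ together with a real $\mu_{1}\in\mathfrak{a}_{1}^{*}$ lying in the closed positive Weyl chamber. Equivalently, the infinitesimal character of a discrete series is real, and matching infinitesimal characters of $\sigma'$ and the principal series through the Harish-Chandra isomorphism forces the imaginary part of $\mu_{1}$ to vanish.

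I expect the main obstacle to be this last reality argument together with the precise identification of $\tilde{\sigma}'$: Casselman's theorem provides the existence of \emph{some} complex-parameter embedding cheaply, but extracting Langlands data within the real structure required by the statement depends on the finer classification theory of tempered representations (or, as an equivalent substitute, Harish-Chandra's direct realization of matrix coefficients of discrete series inside non-unitary principal series).
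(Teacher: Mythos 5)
The paper offers no proof of this proposition: it is quoted directly from Camporesi (\cite[Proposition 4.1]{ca2}), whose argument rests on the standard realization of a discrete series inside a nonunitary principal series of the minimal parabolic of $M'$ — which is exactly the route you take, namely Casselman's subrepresentation theorem for the reductive group $M'$ with minimal parabolic $MA_{1}N_{1}$, followed by an argument forcing the continuous parameter into the real dual $\mathfrak{a}_{1}^{*}$. So your overall strategy matches the one underlying the cited source. One correction, though: your first justification of reality, via ``the Langlands classification of tempered representations with respect to the minimal parabolic $MA_{1}N_{1}$'' with $\mu_{1}$ real in the closed positive chamber, is not right as stated. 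Tempered Langlands (Knapp--Zuckerman) data are built from discrete series of \emph{cuspidal} parabolic subgroups with \emph{purely imaginary} continuous parameter, and for a discrete series of $M'$ the Langlands datum is the trivial one ($P=M'$, the representation itself); no real minimal-parabolic parameter is produced by that classification. The argument you label ``equivalently'' is the one to keep: a discrete series has real (and regular) infinitesimal character; the infinitesimal character of $\mathrm{ind}_{MA_{1}N_{1}}^{M'}(\tilde{\sigma}'\otimes e^{\mu_{1}}\otimes 1)$ is represented by $(\lambda_{\tilde{\sigma}'},\mu_{1})$ on the maximally split Cartan subalgebra, with $\lambda_{\tilde{\sigma}'}$ real since $M$ is compact; and because the complex Weyl group preserves the real span of the roots, matching the two infinitesimal characters forces $\mu_{1}\in\mathfrak{a}_{1}^{*}$ (equivalently, the exponents of a discrete series are real, which is how the embedding with real parameter is usually phrased in Knapp's book). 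Finally, do spell out the routine but necessary step that the $(\mathfrak{m}',K\cap M')$-module embedding furnished by Casselman's theorem gives a closed $M'$-invariant subspace of the induced representation whose underlying Harish-Chandra module is the image; that is precisely what ``infinitesimally equivalent with a subrepresentation'' demands.
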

We write $g=\bold{k}(g)e^{H(g)}n(g)\in KAN$ for each $g\in G$. Let $\tau\in\hat{K}$,
\begin{align*}
C_{0}^{\infty}(G,\tau)=\{f:G\rightarrow V_{\tau}\, |\, f(gk)=\tau(k^{-1})f(g)\, (k\in K, g\in G), \, f\in C_{0}^{\infty}(G,V_{\tau}) \}
\end{align*}
and $F^{\lambda}:G\rightarrow\mathrm{End}_{\mathbb{C}}(V_{\tau})$ be a map given by $F^{\lambda}(g)=e^{\lambda(H(g))}\tau(\bold{k}(g))$ for each $g\in G$ and $\lambda\in\mathfrak{a}_{\mathbb{C}}^{*}$. We define the inner product on $\mathrm{End}_{M}(V_{\tau},V_{\tilde{\sigma}'})$ by $\langle S,T\rangle=\frac{1}{\mathrm{dim}V_{\tilde{\sigma}'}}\mathrm{tr}(S^{*}T)$, where $S^{*}$ is an adjoint operator of $S$. We take an orthonormal basis $\{T_{\xi}\}$ of $\mathrm{End}_{M}(V_{\tau},V_{\tilde{\sigma}'})$ and $T_{\tilde{\sigma}'}=\sum_{\xi}T_{\xi}^{*}T_{\xi}$.
\begin{defi}(\cite[(3.18)]{ca2})
For $f\in C_{0}^{\infty}(G,\tau)$, we define the function $\tilde{f}:\mathfrak{a}^{*}_{\mathbb{C}}\times K\to V_{\tau}$ by
\begin{align*}
\tilde{f}(\lambda,k)=\int_{G}F^{i\bar{\lambda}-\rho_{\mathfrak{a}}}(g^{-1}k)^{*}f(g)dg.
\end{align*}
We say $\tilde{f}$ is the Helgason-Fourier transform of $f$.
\end{defi}
\begin{thm}\label{Plancherel_formula}(\cite[After Theorem 4.3]{ca2})
Let $f_{1}, f_{2}\in C_{0}^{\infty}(G,\tau)$. Then, there exists $c_{Q'}>0$ for each $Q'$ such that
\begin{flushleft}
$\langle f_{1},f_{2}\rangle_{L^2(G,dg)}$\\
$\displaystyle =\sum_{Q'}c_{Q'}\sum_{\sigma'}\frac{1}{\mathrm{dim}V_{\tilde{\sigma}'}}\int_{\mathfrak{a}'^{*}\times K}\langle T_{\tilde{\sigma}'}\tilde{f}_{1}(\nu'+i\mu_{1},k),T_{\tilde{\sigma}'}\tilde{f}_{2}(\nu'-i\mu_{1},k)\rangle_{V_{\tau}}p_{\sigma'}(\nu')d\nu'dk$,
\end{flushleft}
where $p_{\sigma'}(\nu')d\nu'=d\mu(U(Q',\sigma',i\nu'))$ and each $\sigma'$ is a discrete series of $M'$ such that $U(Q',\sigma',i\nu')|_{K}$ contains $\tau$.
\end{thm}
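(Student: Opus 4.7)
The plan is to obtain the formula by specializing Harish-Chandra's Plancherel theorem for $L^{2}(G,dg)$ to the $\tau$-isotypic sector $L^{2}(G,\tau)\otimes V_{\tau}^{*}$ identified in Section \ref{decom}. By polarization it suffices to treat the diagonal case $f_{1}=f_{2}=f$. The abstract Plancherel formula recalled earlier, together with Harish-Chandra's determination of the support and density of the Plancherel measure on $\hat G$, gives
\[
\|f\|^{2}_{L^{2}(G,dg)}=\sum_{Q'}c_{Q'}\sum_{\sigma'}\int_{\mathfrak{a}'^{*}}\mathrm{tr}\bigl(U(Q',\sigma',i\nu')(f)\,U(Q',\sigma',i\nu')(f)^{*}\bigr)\,p_{\sigma'}(\nu')\,d\nu',
\]
where $Q'=M'A'N'$ runs over (conjugacy classes of) cuspidal parabolic subgroups of $G$ and $\sigma'$ runs over discrete series of $M'$. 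The task is to rewrite the Hilbert--Schmidt term on the right in terms of $\tilde f$ when $f\in C_{0}^{\infty}(G,\tau)$.

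For this I would first reduce to the minimal principal series by induction in stages. Proposition 4.1 realises $\sigma'$ as an irreducible subrepresentation of $\mathrm{ind}_{MA_{1}N_{1}}^{M'}(\tilde\sigma'\otimes e^{\mu_{1}}\otimes 1)$, so $U(Q',\sigma',i\nu')$ embeds in the minimal principal series $U(Q,\tilde\sigma',i\nu'+i\mu_{1})$ of $G$, where $Q=MAN$. In the compact picture these representations live on the $M$-invariant subspace of $L^{2}(K,V_{\tilde\sigma'})$. For $f\in C_{0}^{\infty}(G,\tau)$, a direct manipulation of $U(Q,\tilde\sigma',i\lambda)(f)$ using the Iwasawa decomposition $g=\mathbf{k}(g)e^{H(g)}n(g)$ and the right $K$-equivariance of $f$ shows that the matrix coefficients of this operator against the Frobenius-reciprocity embedding $V_{\tau}\otimes\mathrm{End}_{M}(V_{\tau},V_{\tilde\sigma'})\hookrightarrow\mathscr{H}(Q,\tilde\sigma',i\lambda)$ are exactly the pairings of $\tilde f(\lambda,k)$ against the intertwiners $T_{\xi}$. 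The multiplicity-free hypothesis on $(G,K)$ ensures that $\tau$ appears in $U(Q',\sigma',i\nu')|_{K}$ at most once, so every nonzero contribution to $U(Q',\sigma',i\nu')(f)$ is captured by this $\tau$-component.

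Summing the squared matrix coefficients over the orthonormal basis $\{T_{\xi}\}$ and using $\sum_{\xi}T_{\xi}^{*}T_{\xi}=T_{\tilde\sigma'}$ then yields
\[
\bigl\|U(Q',\sigma',i\nu')(f)\bigr\|_{HS}^{2}=\frac{1}{\dim V_{\tilde\sigma'}}\int_{K}\bigl\langle T_{\tilde\sigma'}\tilde f(\nu'+i\mu_{1},k),\,T_{\tilde\sigma'}\tilde f(\nu'-i\mu_{1},k)\bigr\rangle_{V_{\tau}}dk,
\]
where the arguments $\nu'\pm i\mu_{1}$ differ by a sign because $\tilde f$ is defined using the conjugate $i\bar\lambda$ while the Hilbert--Schmidt norm involves an adjoint. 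Substituting into the Plancherel identity above and polarizing yields the stated sesquilinear formula for $f_{1}\neq f_{2}$.

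The main obstacle is the matching in the second paragraph: one must verify precisely that the matrix coefficients of $U(Q,\tilde\sigma',i\lambda)(f)$ are given by $\tilde f(\lambda,\cdot)$ paired against the $T_{\xi}$. This requires careful bookkeeping of the $\rho_{\mathfrak{a}}$-shift built into $F^{\lambda}$, the conjugation $i\bar\lambda$ in the definition of $\tilde f$ that produces the sign flip $\pm i\mu_{1}$, the normalisation $(\dim V_{\tilde\sigma'})^{-1}$ coming from the chosen inner product on $\mathrm{End}_{M}(V_{\tau},V_{\tilde\sigma'})$, and the compatibility of induction in stages with the Iwasawa decompositions of $G$ and $M'$. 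Once these conventions are aligned, the identity follows directly from the Harish-Chandra Plancherel formula.
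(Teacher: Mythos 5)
The paper itself does not prove this statement: it is imported directly from Camporesi (\cite[After Theorem 4.3]{ca2}), so there is no internal argument to compare yours against. Your strategy --- start from Harish-Chandra's Plancherel theorem in the form $\parallel f\parallel^2_{L^2(G,dg)}=\sum_{Q'}c_{Q'}\sum_{\sigma'}\int_{\mathfrak{a}'^*}\mathrm{tr}\bigl(U(Q',\sigma',i\nu')(f)\,U(Q',\sigma',i\nu')(f)^*\bigr)p_{\sigma'}(\nu')\,d\nu'$, restrict to $f\in C_0^{\infty}(G,\tau)$, and rewrite each Hilbert--Schmidt trace through the Helgason--Fourier transform using induction in stages and the intertwiners $T_{\xi}$ --- is exactly the route Camporesi follows, so the architecture is the right one.

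As a proof, however, your text has a genuine gap, and you name it yourself: the whole content of the theorem is the identity $\bigl\Vert U(Q',\sigma',i\nu')(f)\bigr\Vert_{HS}^2=\frac{1}{\dim V_{\tilde\sigma'}}\int_K\bigl\langle T_{\tilde\sigma'}\tilde f(\nu'+i\mu_1,k),T_{\tilde\sigma'}\tilde f(\nu'-i\mu_1,k)\bigr\rangle_{V_\tau}dk$, and you assert it rather than derive it. Hidden in that assertion are the only nontrivial points: (a) that $U(Q',\sigma',i\nu')(f)$ is supported on a single copy of the $\tau$-isotypic subspace, which uses $f(gk)=\tau(k^{-1})f(g)$ together with multiplicity-freeness, and also requires reconciling the fact that $f$ is $V_\tau$-valued with the scalar form of the abstract Plancherel theorem quoted in the paper (one must argue componentwise or inside $L^2(G,\tau)\otimes V_\tau^*$); (b) the noncompact-picture computation identifying the matrix coefficients of the induced-representation operator with $\int_G F^{i\bar\lambda-\rho_{\mathfrak a}}(g^{-1}k)^*f(g)\,dg$, including the $\rho_{\mathfrak a}$-shift and the role of the basis $\{T_\xi\}$ with $\sum_\xi T_\xi^*T_\xi=T_{\tilde\sigma'}$; and (c) the appearance of $\mu_1$ via the embedding of $\sigma'$ into $\mathrm{ind}_{MA_1N_1}^{M'}(\tilde\sigma'\otimes e^{\mu_1}\otimes 1)$ and the sign flip $\nu'\pm i\mu_1$, which you attribute to the conjugation $i\bar\lambda$ but do not verify. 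These are precisely the steps carried out in \cite{ca2} (the development around (3.18), Theorem 4.3 and (5.28)--(5.30)); without them your argument is an outline of the known proof rather than a proof. Since the paper treats the statement as a quoted result, relying on the citation is legitimate --- but then it is the citation, not the sketch, that carries the burden.
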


Next, we describe relations between the Helgason-Fourier transforms and the spherical transforms in a special case. Let $\Psi\in C_{0}^{\infty}(G,\tau,\tau)$, $v\in V_{\tau}$ and $\psi(g)=\Psi(g)v$ for $g\in G$. Then, $\psi\in C_{0}^{\infty}(G,\tau)$. In this situation, we get
\begin{align*}
\tilde{\psi}(\nu,k)=\sum_{\sigma\in\hat{M}, \sigma\subset\tau|_{M}}\ ^{t}\hat{\Psi}(U(Q,\sigma,i\nu))P_{\sigma}\tau(k^{-1})v,\, \, \nu\in\mathfrak{a}^{*}, k\in K
\end{align*}
(\cite[(5.28)]{ca2}). In relation to this, we show an important formula which does not appear in \cite{ca2}. This is generalization of \cite[Lemma 1.4 in Chapter 3]{helg3}. Let $\Psi\in C_{0}^{\infty}(G,\tau,\tau)$ and $f\in C_{0}^{\infty}(G,\tau)$. We define the convolution between $\Psi$ and $f$ by
\begin{align*}
\Psi*f(g)=\int_{G}\Psi(g'^{-1}g)f(g')dg'.
\end{align*}
\begin{thm}\label{conv_mult}
Let $\Psi\in C_{0}^{\infty}(G,\tau,\tau)$ and $f\in C_{0}^{\infty}(G,\tau)$. Then,
\begin{align*}
\widetilde{\Psi*f}(\nu,k)=\Bigl( \sum_{\sigma\in\hat{M}, \sigma\subset\tau|_{M}}\ ^{t}\hat{\Psi}(U(Q,\sigma,i\nu))P_{\sigma}\Bigr) \tilde{f}(\nu,k),\, \, \nu\in\mathfrak{a}^{*}, k\in K.
\end{align*}
\end{thm}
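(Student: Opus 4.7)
The plan is to unfold the Helgason--Fourier transform, apply Fubini, change variables, and then invoke the formula (5.28) quoted just above in operator form. Set $\lambda = i\bar\nu - \rho_\mathfrak{a}$ and $A(\nu) := \sum_{\sigma\in\hat M,\,\sigma\subset\tau|_M}{}^{t}\hat\Psi(U(Q,\sigma,i\nu))P_\sigma$. Starting from the definitions (Fubini is legal since $\Psi$ and $f$ are compactly supported smooth) and performing the left-invariant substitution $g = g'h$, I obtain
\begin{align*}
\widetilde{\Psi * f}(\nu, k)
&= \int_G F^{\lambda}(g^{-1}k)^{*} \int_G \Psi(g'^{-1}g) f(g')\, dg'\, dg \\
&= \int_G \left[ \int_G F^{\lambda}(h^{-1} g'^{-1} k)^{*} \Psi(h)\, dh \right] f(g')\, dg'.
\end{align*}
Thus the theorem reduces to the operator identity
\begin{align*}
J(y) := \int_G F^{\lambda}(h^{-1} y)^{*} \Psi(h)\, dh = A(\nu)\, F^{\lambda}(y)^{*} \qquad (y \in G),
\end{align*}
after which one sets $y = g'^{-1} k$ and pulls $A(\nu)$ outside the $g'$-integral.

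To prove this operator identity, I would first handle $y = k_0 \in K$: here it is exactly formula (5.28) applied to $\psi(g) = \Psi(g) v$ with $v \in V_\tau$ arbitrary, giving $J(k_0) = A(\nu)\tau(k_0^{-1}) = A(\nu) F^{\lambda}(k_0)^{*}$. For general $y \in G$ I would invoke the Iwasawa decomposition $y = \bold{k}(y) e^{H(y)} n(y)$ together with the elementary equivariance
\begin{align*}
F^{\lambda}(gan) = e^{\lambda(\log a)} F^{\lambda}(g) \qquad (g \in G,\, a \in A,\, n \in N),
\end{align*}
which is immediate from $\bold{k}(gan) = \bold{k}(g)$ and $H(gan) = H(g) + \log a$. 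Applied with $g = h^{-1}\bold{k}(y)$, $a = e^{H(y)}$, $n = n(y)$, this yields $F^{\lambda}(h^{-1}y)^{*} = e^{\overline{\lambda(H(y))}} F^{\lambda}(h^{-1}\bold{k}(y))^{*}$ and $F^{\lambda}(y)^{*} = e^{\overline{\lambda(H(y))}} \tau(\bold{k}(y)^{-1})$, so integrating in $h$ and invoking the $K$-case gives $J(y) = e^{\overline{\lambda(H(y))}} A(\nu) \tau(\bold{k}(y)^{-1}) = A(\nu) F^{\lambda}(y)^{*}$, as required.

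Plugging this back with $y = g'^{-1} k$ produces $\widetilde{\Psi * f}(\nu, k) = A(\nu) \tilde f(\nu, k)$, which is the asserted formula. I do not foresee a serious obstacle: the substantive input is (5.28), and the extension from $K$ to all of $G$ is a one-line computation using the right $AN$-equivariance of $F^{\lambda}$. The only bookkeeping point requiring care is the conjugate bar in $\lambda = i\bar\nu - \rho_{\mathfrak{a}}$, which is harmless because the spectral parameter $\nu \in \mathfrak{a}^{*}$ is real, so $\overline{\lambda(\log a)} = -(i\nu + \rho_{\mathfrak{a}})(\log a) \in \mathbb{R}$ and all exponentials match across the two sides of the identity.
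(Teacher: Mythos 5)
Correct, and essentially the paper's own argument: after Fubini and the substitution $g=g'h$, your right $AN$-equivariance of $F^{\lambda}$ is exactly the paper's cocycle identity $\mathbf{k}(hh'l)=\mathbf{k}(h\mathbf{k}(h'l))$, $H(hh'l)=H(h\mathbf{k}(h'l))+H(h'l)$, and the remaining operator is identified through the quoted formula (5.28), just as in the paper (you invoke it at a general point $k_{0}\in K$, while the paper pushes $\mathbf{k}(g'^{-1}k)$ onto $\Psi$ by a change of variables and uses it at the identity---a cosmetic repackaging). The only blemish is the parenthetical claim that $\overline{\lambda(\log a)}=-(i\nu+\rho_{\mathfrak{a}})(\log a)$ is real (it is complex in general), but your computation never uses this, only that the same scalar factors out of both sides, so the proof stands.
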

\begin{proof}
Let $h, h'\in G$ and $l\in K$. We rewrite $hh'l$:
\begin{eqnarray}
hh'l&=&h\bold{k}(h'l)e^{H(h'l)}n(h'l) \nonumber \\
&=&\bold{k}(h\bold{k}(h'l))e^{H(h\bold{k}(h'l))}n(h\bold{k}(h'l))e^{H(h'l)}n(h'l) \nonumber \\
&=&\bold{k}(h\bold{k}(h'l))e^{H(h\bold{k}(h'l))+H(h'l)}e^{-H(h'l)}n(h\bold{k}(h'l))e^{H(h'l)}n(h'l). \nonumber
\end{eqnarray}
Since $A$ normalizes $N$, we get $\bold{k}(hh'l)=\bold{k}(h\bold{k}(h'l))$ and $H(hh'l)=H(h\bold{k}(h'l))+H(h'l)$.
 Then,
\begin{eqnarray}
&&\widetilde{\Psi*f}(\nu,k)\nonumber \\
&=&\int_{G}e^{-(i\nu+\rho_{\mathfrak{a}})(H(g^{-1}k))}\tau(\bold{k}(g^{-1}k))^{-1}\Psi*f(g)dg \nonumber \\
&=&\int_{G}\int_{G}e^{-(i\nu+\rho_{\mathfrak{a}})(H(g^{-1}k))}\tau(\bold{k}(g^{-1}k))^{-1}\Psi(g'^{-1}g)f(g')dgdg'. \nonumber
\end{eqnarray}
We change the variable from $g$ to $g'g$:
\begin{eqnarray}
&&\widetilde{\Psi*f}(\nu,k)\nonumber \\
&=&\int_{G}\int_{G}e^{-(i\nu+\rho_{\mathfrak{a}})(H(g^{-1}g'^{-1}k))}\tau(\bold{k}(g^{-1}g'^{-1}k))^{-1}\Psi(g)f(g')dgdg'. \nonumber
\end{eqnarray}
Next, we use the formula above with $h=g^{-1}$, $h'=g'^{-1}$ and $l=k$:
\begin{eqnarray}
&&\widetilde{\Psi*f}(\nu,k)\nonumber \\
&=&\int_{G}\int_{G}e^{-(i\nu+\rho_{\mathfrak{a}})(H(g^{-1}\bold{k}(g'^{-1}k))+H(g'^{-1}k))}\tau(\bold{k}(g^{-1}\bold{k}(g'^{-1}k)))^{-1}\Psi(g)f(g')dgdg'. \nonumber
\end{eqnarray}
We change the variable from $g$ to $\bold{k}(g'^{-1}k)g$:
\begin{eqnarray}
&&\widetilde{\Psi*f}(\nu,k)\nonumber \\
&=&\int_{G}\int_{G}e^{-(i\nu+\rho_{\mathfrak{a}})(H(g^{-1})+H(g'^{-1}k))}\tau(\bold{k}(g^{-1}))^{-1}\Psi(\bold{k}(g'^{-1}k)g)f(g')dgdg' \nonumber \\
&=&\int_{G}\int_{G}e^{-(i\nu+\rho_{\mathfrak{a}})(H(g^{-1})+H(g'^{-1}k))}\tau(\bold{k}(g^{-1}))^{-1}\Psi(g)\tau(\bold{k}(g'^{-1}k))^{-1}f(g')dgdg'. \nonumber \\
&=&\int_{G}e^{-(i\nu+\rho_{\mathfrak{a}})(H(g^{-1}))}\tau(\bold{k}(g^{-1}))^{-1}\Psi(g)dg\tilde{f}(\nu,k). \nonumber
\end{eqnarray}
Thus, we get
\begin{align*}
\widetilde{\Psi*f}(\nu,k)=\Bigl( \sum_{\sigma\in\hat{M}, \sigma\subset\tau|_{M}}\ ^{t}\hat{\Psi}(U(Q,\sigma,i\nu))P_{\sigma}\Bigr) \tilde{f}(\nu,k).
\end{align*}
\end{proof}

In general, let $Q'$ be any parabolic subgroup, $\Psi\in C_{0}^{\infty}(G,\tau,\tau)$, $v\in V_{\tau}$ and $\psi(g)=\Psi(g)v$ for $g\in G$. It is known that
\begin{align*}
T_{\tilde{\sigma}'}\tilde{\psi}(\nu'-i\mu_{1})=\ ^{t}\hat{\Psi}(U(Q',\sigma',\nu'))T_{\tilde{\sigma}'}\tau(k^{-1})v
\end{align*}
(see \cite[After (5.30)]{ca2}).

\section{The heat kernel on $SL(2,\mathbb{R})$}

\subsection{The heat equation on $SL(2,\mathbb{R})$}
First, we fix some notations and define a differential operator. Let $G=SL(2,\mathbb{R})$ and $K=SO(2)$. Since the map $\theta :G\ni g\mapsto (^{t}g)^{-1}\in G$ is a Cartan involution, the pair $(G,K)$ is a Riemannian symmetric pair. Let $\mathfrak{g}=\mathfrak{sl}(2,\mathbb{R})$, $\mathfrak{k}=\mathfrak{so}(2)$ and $\mathfrak{p}=\{X\in\mathfrak{g}\, |\, d\theta(X)=-X\}$. Then $\mathfrak{g}=\mathfrak{k}\oplus\mathfrak{p}$. Let $\langle, \rangle_{\mathfrak{g}}$ be an inner product on $\mathfrak{g}$ defined by $\langle X, Y \rangle_{\mathfrak{g}}=4\mathrm{tr}(^tXY)$. Then the inner product satisfies the conditions below:
\begin{flushleft}
$\langle X,Y \rangle_{\mathfrak{g}}=-4\mathrm{tr}(XY),\, \, X, Y\in\mathfrak{k}$,\\
$\langle X,Y \rangle_{\mathfrak{g}}=+4\mathrm{tr}(XY),\, \, X, Y\in\mathfrak{p}$,\\
$\langle X,Y \rangle_{\mathfrak{g}}=0,\, \, X\in\mathfrak{k},Y\in\mathfrak{p}$.
\end{flushleft}
These properties are related to the Killing form. Moreover $\langle, \rangle_{\mathfrak{g}}$ is $\mathrm{Ad}(K)$ invariant. Let
\begin{align*}
X_{1}=\frac{1}{\sqrt{8}}
\begin{pmatrix}
0 & -1 \\
1 & 0
\end{pmatrix}
,\, Y_{1}=\frac{1}{\sqrt{8}}
\begin{pmatrix}
1 & 0 \\
0 & -1
\end{pmatrix}
,\, Y_{2}=\frac{1}{\sqrt{8}}
\begin{pmatrix}
0 & 1 \\
1 & 0
\end{pmatrix}.
\end{align*}
Then, $\{X_{1}\}$ is an orthonormal basis of $\mathfrak{k}$ and $\{Y_{1},Y_{2}\}$ is an orthonormal basis of $\mathfrak{p}$. Thus the differential operator which we should consider is
\begin{align*}
\Delta_{G}=\tilde{X_{1}}\circ\tilde{X_{1}}+\tilde{Y_{1}}\circ\tilde{Y_{1}}+\tilde{Y_{2}}\circ\tilde{Y_{2}}.
\end{align*}

Next, we define a Haar measure $dg$ on $G$ according to \cite[Proposition 5.1 in Chapter 5]{mits}. Let
\begin{align*}
k_{\theta}=
\begin{pmatrix}
\mathrm{cos}\frac{\theta}{2} & \mathrm{sin}\frac{\theta}{2} \\
-\mathrm{sin}\frac{\theta}{2} & \mathrm{cos}\frac{\theta}{2}
\end{pmatrix}
,\, a_{s}=
\begin{pmatrix}
e^{\frac{s}{2}} & 0 \\
0 & e^{-\frac{s}{2}}
\end{pmatrix}
,\, n_{x}=
\begin{pmatrix}
1 & x \\
0 & 1
\end{pmatrix}
\end{align*}
for $\theta, s, x\in\mathbb{R}$ and
\begin{align*}
A=\{a_{s}\, |\, s\in\mathbb{R}\}, N=\{n_{x}\, |\, x\in\mathbb{R}\}.
\end{align*}
Then, we get  the Iwasawa decomposition
\begin{align*}
G=KAN\cong K\times A\times N
\end{align*}
and the Cartan decomposition
\begin{align*}
G=KAK.
\end{align*}
We define $dg$ by
\begin{align*}
\int_{G}f(g)dg=\frac{1}{4\pi}\int_{(0,4\pi)\times\mathbb{R}\times\mathbb{R}}f(k_{\theta}a_{s}n_{x})e^{s}d\theta dsdx,\, \, f\in C_{0}(G).
\end{align*}

\subsection{Decomposition of $L^2(SL(2,\mathbb{R}),dg)$}
For $n\in\mathbb{Z}$, let $\tau_{n}:K\rightarrow GL(1,\mathbb{C})\cong \mathbb{C}^{\times}$ be the function defined by $\tau_{n}(k_{\theta})=e^{in\frac{\theta}{2}}$. Then, $\hat{K}=\{\tau_{n}\}_{n\in\mathbb{Z}}$. Note that all the representation space $V_{\tau_{n}}$ is $\mathbb{C}$. According to Section \ref{decom}, we get the following decomposition:
\begin{align*}
L^2(G,dg)=\sum_{n\in\mathbb{Z}}L^2(G,\tau_{n}),
\end{align*}
where $L^2(G,\tau_{n})=\{f\in L^2(G)\, |\, f(gk)=\tau_{n}(k^{-1})f(g),\, \,  g\in G,\, k\in K\}$. In this situation, $L^2(G,\tau_{n},\tau_{n})\subset L^2(G,\tau_{n})$. Let $P_{\tau_{n}}:L^2(G,dg)\rightarrow L^2(G,\tau_{n})$ be the orthogonal projection operator. Then we have
\begin{align*}
P_{\tau_{n}}f(g)=\int_{K}\tau_{n}(k)f(gk)dk
\end{align*}
since
\begin{align*}
\int_{K}\tau_{n}(k')f((gk)k')dk'=\int_{K}\tau_{n}(k^{-1}k')f(gk')dk'=\tau_{n}(k^{-1})\int_{K}\tau_{n}(k')f(gk')dk'
\end{align*}
for $g\in G,\, k\in K$.

\subsection{Irreducible unitary representations of $SL(2,\mathbb{R})$}
First, we describe important irreducible unitary representations according to \cite[Chapter 5]{mits}. For $\varepsilon=0,1$ and $\nu\in\mathbb{R}$, let $U_{\varepsilon, \nu}:G\rightarrow U(L^2(\mathbb{R},\frac{1}{\pi}dx))$ be the unitary representation of $G$ given by
\begin{align*}
U_{\varepsilon, \nu}(g)f(x)=(\mathrm{sgn}(cx+d))^{\varepsilon}(cx+d)^{i2\nu-1}f(\frac{ax+b}{cx+d}),
\end{align*}
where $g^{-1}=
\begin{pmatrix}
a & b\\
c & d
\end{pmatrix}
\in G$. It is known that $U_{\varepsilon, -\nu}\cong U_{\varepsilon, \nu}$. The set $\{U_{\varepsilon, \nu}\}_{\varepsilon=0,1, \nu>0}$ is called the continuous series. We denote by $\mathbb{H}$ the upper half plane in $\mathbb{C}$. For an integer $m\geq 2$, let
\begin{flushleft}
$\mathscr{H}_{m}^{+}=\{f:\mathbb{H}:\rightarrow\mathbb{C}\, |\, f:$\, holomorphic$, ||f||=\int_{\mathbb{H}}|f(z)|^2y^{m-2}dxdy<\infty\}$,\\
$\mathscr{H}_{m}^{-}=\{f:\mathbb{H}:\rightarrow\mathbb{C}\, |\, f:$\, anti-holomorphic$, ||f||=\int_{\mathbb{H}}|f(z)|^2y^{m-2}dxdy<\infty\}$.
\end{flushleft}
These are Hilbert spaces. Let $U_{m}^{+}:G\rightarrow U(\mathscr{H}_{m}^{+})$ be the unitary representation of $G$ given by
\begin{align*}
U_{m}^{+}(g)f(z)=(cz+d)^{-m}f(\frac{az+b}{cz+d})
\end{align*}
and $U_{m}^{-}:G\rightarrow GL(\mathscr{H}_{m}^{-})$ be the unitary representation of $G$ given by
\begin{align*}
U_{m}^{-}(g)f(z)=(c\bar{z}+d)^{-m}f(\frac{az+b}{cz+d}),
\end{align*}
where $g^{-1}=
\begin{pmatrix}
a & b\\
c & d
\end{pmatrix}
\in G$. The set $\{U_{m}^{+}\}_{m\geq 2}\cup\{U_{m}^{-}\}_{m\geq 2}$ is called the regular discrete series. Let $\hat{G}_{p}=\{U_{\varepsilon, \nu}\}_{\varepsilon=0,1, \nu>0}\cup\{U_{m}^{+}\}_{m\geq 2}\cup\{U_{m}^{-}\}_{m\geq 2}$. This set is called the regular principal series.

Next, we discuss characters according to \cite[Section 5.6]{mits}. Let $\mathfrak{g}_{\mathbb{C}}$ be a complexification of $\mathfrak{g}$, $U(\mathfrak{g}_{\mathbb{C}})$ be the enveloping algebra of $\mathfrak{g}_{\mathbb{C}}$, $\mathfrak{Z}\subset U(\mathfrak{g}_{\mathbb{C}})$ be the center of $U(\mathfrak{g}_{\mathbb{C}})$ and $U\in\hat{G}$. For $u\in \mathfrak{Z}$, we see that $dU(u)$ is a scalar operator by the Schur Lemma, so that we can write $dU(u)=\chi_{U}(u)Id_{H_{U}}$ with $\chi_{U}(u)\in\mathbb{C}$. Then, the homomorphism $\mathfrak{Z}\ni u\mapsto\chi_{U}(u)\in\mathbb{C}$ is defined. The function $\chi_{U}$ is called the character of $U$. Since $U(\mathfrak{g}_{\mathbb{C}})$ is isomorphic to the space $\mathbb{D}_{L}(G)$ of left invariant differential operators (see \cite[Section 2.5]{mits}), we can identify $\mathfrak{Z}$ with a center of $\mathbb{D}_{L}(G)$. Let
\begin{align*}
C_{G}=-\tilde{X_{1}}\circ\tilde{X_{1}}+\tilde{Y_{1}}\circ\tilde{Y_{1}}+\tilde{Y_{2}}\circ\tilde{Y_{2}},
\end{align*}
then $C_{G}\in\mathfrak{Z}$ and $\Delta_{G}=2\tilde{X_{1}}\circ\tilde{X_{1}}+C_{G}$. The element $C_{G}$ is called the Casimir operator of $G$. We can describe the value of characters at the center $C_{G}$.
\begin{thm}\label{character}(cf. \cite[Theorem 6.4 in Chapter 5]{mits})
We have
\begin{eqnarray}
\chi_{U_{\varepsilon,\nu}}(C_{G})&=&-\frac{1}{2}(\nu^2+\frac{1}{4}), \nonumber \\
\chi_{U_{m}^{+}}(C_{G})&=&\frac{m(m-2)}{8}, \nonumber \\
\chi_{U_{m}^{-}}(C_{G})&=&\frac{m(m-2)}{8}. \nonumber
\end{eqnarray}
\end{thm}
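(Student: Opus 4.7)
The plan is to express $C_{G}$ in terms of the standard $\mathfrak{sl}_{2}$-triple, and then compute $dU(C_{G})$ directly for each family of representations, reading off the Casimir eigenvalue via Schur's lemma.

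First I would introduce $H = \begin{pmatrix}1 & 0\\ 0 & -1\end{pmatrix}$, $E = \begin{pmatrix}0 & 1\\ 0 & 0\end{pmatrix}$, $F = \begin{pmatrix}0 & 0\\ 1 & 0\end{pmatrix}$, so that $\sqrt{8}\,X_{1} = F-E$, $\sqrt{8}\,Y_{1} = H$, $\sqrt{8}\,Y_{2} = E+F$. Using $E^{2}=F^{2}=0$ one checks $(F-E)^{2} = -(EF+FE)$ and $(E+F)^{2} = EF+FE$, so that
\begin{equation*}
8\,C_{G} \;=\; -(F-E)^{2} + H^{2} + (E+F)^{2} \;=\; H^{2} + 2(EF+FE) \;=\; 2\,\Omega,
\end{equation*}
where $\Omega := \tfrac{1}{2}H^{2} + EF + FE$ is the classical Casimir element of $\mathfrak{sl}(2,\mathbb{C})$. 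Hence $\chi_{U}(C_{G}) = \tfrac{1}{4}\chi_{U}(\Omega)$ for every $U \in \hat{G}$, and it suffices to compute $\chi_{U}(\Omega)$.

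Next I would differentiate each defining formula at the identity to get $dU(H)$, $dU(E)$, $dU(F)$ as first-order differential operators. For the continuous series on $L^{2}(\mathbb{R},\tfrac{1}{\pi}dx)$ a direct computation yields
\begin{equation*}
dU_{\varepsilon,\nu}(H) = (i2\nu-1) - 2x\,\partial_{x},\quad dU_{\varepsilon,\nu}(E) = -\partial_{x},\quad dU_{\varepsilon,\nu}(F) = (1-i2\nu)x + x^{2}\partial_{x},
\end{equation*}
and for the holomorphic discrete series on $\mathscr{H}_{m}^{+}$
\begin{equation*}
dU_{m}^{+}(H) = -m - 2z\,\partial_{z},\quad dU_{m}^{+}(E) = -\partial_{z},\quad dU_{m}^{+}(F) = mz + z^{2}\partial_{z}.
\end{equation*}
Substituting into $\Omega = \tfrac{1}{2}H^{2} + EF + FE$ and using $(z\partial_{z})^{2} = z\partial_{z} + z^{2}\partial_{z}^{2}$, the first- and second-order differential terms cancel identically, leaving the scalars $dU_{\varepsilon,\nu}(\Omega) = \tfrac{(i2\nu-1)^{2}}{2} - (1-i2\nu) = -2\nu^{2}-\tfrac{1}{2}$ and $dU_{m}^{+}(\Omega) = \tfrac{m^{2}}{2} - m = \tfrac{m(m-2)}{2}$. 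Dividing by $4$ gives precisely the values claimed. For $U_{m}^{-}$ the same calculation goes through with $\bar z$ in place of $z$ and antiholomorphic derivatives throughout, and produces the same eigenvalue as in the holomorphic case.

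The main obstacle is not conceptual but arithmetic: one must propagate the factor $1/\sqrt{8}$ from the orthonormal basis (absorbed by the prefactor $1/8$ in front of $C_{G}$), remember that the representation formulas involve the entries of $g^{-1}$ rather than $g$, and verify that the $z\partial_{z}$ and $z^{2}\partial_{z}^{2}$ contributions from $\tfrac{1}{2}H^{2}$ exactly cancel those from $EF+FE$ so that $dU(\Omega)$ really reduces to a scalar operator. Once these points are handled, each of the three eigenvalue identities boils down to a short polynomial calculation in $\nu$ or $m$.
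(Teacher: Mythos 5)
Your proposal is correct, but note that it does not parallel an argument in the paper: Theorem \ref{character} is stated there without proof, the values being quoted from Sugiura \cite[Theorem 6.4 in Chapter 5]{mits}. Your direct computation is a sound, self-contained substitute. Indeed $\sqrt{8}\,X_{1}=F-E$, $\sqrt{8}\,Y_{1}=H$, $\sqrt{8}\,Y_{2}=E+F$, the realizations of $dU_{\varepsilon,\nu}$ and $dU_{m}^{+}$ as first-order operators are the right ones for the paper's $g^{-1}$ convention, and substituting into $\Omega=\tfrac12 H^{2}+EF+FE$ does make all $\partial$-terms cancel, leaving the scalars $-2\nu^{2}-\tfrac12$ and $\tfrac{m(m-2)}{2}$, hence the stated eigenvalues for $C_{G}=\tfrac14\Omega$. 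Two points should be tightened in the write-up. First, the justification ``$E^{2}=F^{2}=0$'' is a matrix identity and is false in $U(\mathfrak{g}_{\mathbb{C}})$, where $C_{G}$ actually lives; the correct observation is that the $E^{2}$ and $F^{2}$ terms cancel between $-(F-E)^{2}$ and $(E+F)^{2}$, which still gives $8C_{G}=H^{2}+2(EF+FE)$, so only the wording needs repair. Second, since Schur's lemma identifies $dU(C_{G})$ as a scalar on smooth vectors, you should say on which dense subspace of smooth vectors you evaluate your operators (e.g.\ compactly supported smooth functions in the noncompact picture, and the usual $K$-finite holomorphic vectors for $\mathscr{H}_{m}^{\pm}$); this is routine but worth a sentence. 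Finally, an alternative check entirely internal to the paper: apply $C_{G}=-\tilde{X}_{1}^{2}+\tilde{Y}_{1}^{2}+\tilde{Y}_{2}^{2}$ to a weight vector $e_{\tau_{n}}^{U}$ using Theorem \ref{derivative}; the shifts to weights $n\pm4$ cancel between $\tilde{Y}_{1}^{2}$ and $\tilde{Y}_{2}^{2}$, and the diagonal coefficient comes out as $-\tfrac12(\nu^{2}+\tfrac14)$, resp.\ $\tfrac{m(m-2)}{8}$, independently of any concrete model, which corroborates both your computation and the quoted theorem.
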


Finally, we describe the weights and $\hat{G}(\tau_{n})_{p}=\hat{G}(\tau_{n})\cap\hat{G}_{p}$ according to \cite[Section 5.6]{mits}. For $U\in\hat{G}$, let $\Lambda_{U}=\{n\in\mathbb{Z}\, |\, m(\tau_{n},U)\neq 0\}$ which is called the set of weights of $U$. For $n\in\Lambda_{U}$, let $e_{\tau_{n}}^{U}\in H_{U}$ be a normalized weight vector of $n$. Now, we can give the set of weights.
\begin{thm}(cf. \cite[Theorem 6.4 in Chapter 5]{mits})\label{weight}
We have
\begin{eqnarray}
\Lambda_{U_{\varepsilon,\nu}}&=&\{n\in\mathbb{Z}\, |\, n\equiv\varepsilon\, (\mathrm{mod} 2)\}, \nonumber \\
\Lambda_{U_{m}^{+}}&=&\{n\in\mathbb{Z}\, |\, n\leq -m,\, n\equiv m\, (\mathrm{mod} 2)\}, \nonumber \\
\Lambda_{U_{m}^{-}}&=&\{n\in\mathbb{Z}\, |\, n\geq m,\, n\equiv m\, (\mathrm{mod} 2)\}. \nonumber
\end{eqnarray}
\end{thm}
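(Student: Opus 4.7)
Since $K = SO(2)$ is abelian and multiplicity-free in $G$ (Section \ref{Principle series}), each $K$-isotypic component $H_U(\tau_n)$ has dimension at most one, so determining $\Lambda_U$ amounts to exhibiting, for each $U \in \hat{G}_p$, an explicit complete orthogonal family of joint eigenvectors for the commutative family $\{U(k_\theta)\}_{\theta \in \mathbb{R}}$ and reading off the eigencharacters. The common technical input is a pair of Cayley-type identities: writing $k_\theta^{-1} = \begin{pmatrix} a & b \\ c & d \end{pmatrix}$ (so $a = d = \cos(\theta/2)$ and $c = -b = \sin(\theta/2)$), one checks
\[
(cz+d)\big|_{z = \pm i} = e^{\pm i\theta/2}, \qquad (az+b) \pm i(cz+d) = e^{\pm i\theta/2}(z \pm i),
\]
which express the fact that the Cayley transform $z \mapsto (z-i)/(z+i)$ turns the fractional-linear action of $k_\theta$ into the rotation $w \mapsto e^{-i\theta}w$.

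Applying this to the holomorphic discrete series, I would take $f_k(z) := (z+i)^{-m}\bigl((z-i)/(z+i)\bigr)^k$ for $k = 0, 1, 2, \ldots$ in $\mathscr{H}_m^+$; direct substitution using the identities above gives $U_m^+(k_\theta) f_k = e^{-i(m+2k)\theta/2} f_k$, so $f_k$ is a weight vector of weight $-(m+2k)$. Completeness follows because, after transporting to the unit disk by $w = (z-i)/(z+i)$, $\mathscr{H}_m^+$ is unitarily equivalent to a weighted Bergman space on $\mathbb{D}$ in which the $f_k$ correspond to constant multiples of the monomials $w^k$, and these form a complete orthogonal system. Hence $\Lambda_{U_m^+} = \{-m, -m-2, -m-4, \ldots\}$; the anti-holomorphic series $U_m^-$ is handled identically with $\bar z$ in place of $z$, yielding $\Lambda_{U_m^-} = \{m, m+2, m+4, \ldots\}$.

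For the continuous series $U_{\varepsilon,\nu}$ on $L^2(\mathbb{R},\pi^{-1}dx)$, I would use the candidate weight vectors $\varphi_n(x) = (x-i)^{(2i\nu - 1 - n)/2}(x+i)^{(2i\nu - 1 + n)/2}$; the same identities give $U_{\varepsilon,\nu}(k_\theta)\varphi_n = (\mathrm{sgn}(cx+d))^\varepsilon e^{in\theta/2}\varphi_n$. The delicate point here, and the main obstacle of the argument, is the branch analysis: with a consistent choice of logarithm branches, $\varphi_n$ descends to a single-valued element of $L^2(\mathbb{R},\pi^{-1}dx)$ and the residual $\mathrm{sgn}$-factor is absorbed by the branch jump of $(x\pm i)^\alpha$ precisely when $n \equiv \varepsilon \pmod 2$; this parity constraint is therefore forced, not imposed by hand. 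Completeness of $\{\varphi_n : n \equiv \varepsilon \pmod 2\}$ then follows from Fourier completeness on $S^1$ after substituting $e^{i\phi} = (x-i)/(x+i)$, which maps these functions (up to scalars) to the Fourier basis of the parity class $\varepsilon$. Once the Cayley identities are in place and the branch conventions for the continuous series are made precise, every other step is routine verification.
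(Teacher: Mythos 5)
Your argument is correct, and it is worth noting that the paper does not prove this statement at all: it is quoted from Sugiura (Theorem 6.4 in Chapter 5 of the cited book), so your proposal supplies a proof where the paper relies on a citation. I checked the key points. Your Cayley identities hold with the paper's conventions ($a=d=\cos\frac{\theta}{2}$, $c=-b=\sin\frac{\theta}{2}$ for $k_\theta^{-1}$), and the discrete-series computation $U_m^{+}(k_\theta)f_k=e^{-i(m+2k)\theta/2}f_k$ together with the unitary transport to the disk (where $f_k$ becomes a multiple of $w^k$, using $(z+i)^{-m}\propto(1-w)^m$) does give $\Lambda_{U_m^{+}}=\{-m,-m-2,\dots\}$, and likewise for $U_m^{-}$. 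For the continuous series your branch claim is right and can be made precise as follows: with $\arg(x\pm i)=\pm\alpha(x)$, $\alpha(x)\in(0,\pi)$, one has $\varphi_n(x)=(x^2+1)^{(2i\nu-1)/2}e^{in\alpha(x)}$ and, since $(ax+b)^2+(cx+d)^2=x^2+1$ and $w+i=e^{i\theta/2}(x+i)/(cx+d)$, a direct computation gives $U_{\varepsilon,\nu}(k_\theta)\varphi_n=(\mathrm{sgn}(cx+d))^{\varepsilon+n}e^{in\theta/2}\varphi_n$; so $\varphi_n$ is a weight vector exactly when $n\equiv\varepsilon\pmod 2$, as you say. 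One logical point to keep straight: the failure of $\varphi_n$ to be an eigenvector for the wrong parity does not by itself exclude those weights; it is the completeness step that does, and your completeness argument is sound, since under $x=\cot(\psi/2)$ one has $dx/(1+x^2)=\frac{1}{2}\,d\psi$ (up to sign) and the $\varphi_n$ of a fixed parity go over to $e^{in\psi/2}$, a complete orthonormal system in $L^2((0,2\pi),d\psi/2\pi)$ for each parity class. A shortcut you might mention for the continuous series is the compact picture: $U_{\varepsilon,\nu}|_K\cong\mathrm{Ind}_M^K\sigma_\varepsilon$ realized on functions on $K$ with $f(-k)=(-1)^{\varepsilon}f(k)$, whose Fourier expansion immediately yields the weights $n\equiv\varepsilon\pmod 2$ with multiplicity one and avoids the branch bookkeeping; your line-model argument buys explicit weight vectors, which is also what Section \ref{SFonSL2R} of the paper implicitly uses.
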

Using Theorem \ref{weight}, we can describe $\hat{G}(\tau_{n})_{p}$.
\begin{cor}\label{intedom}
\begin{eqnarray}
\mathrm{If}\,  n \geq 0, \mathrm{\, we\, \, have}\nonumber \\
\hat{G}(\tau_{n})_{p}&=&\{U_{\varepsilon,\nu}\, |\, \varepsilon\equiv n(\mathrm{mod} 2),\, \nu>0\}\cup\{U_{m}^{+}\, |\, 2\leq m\leq |n|,\, m\equiv n(\mathrm{mod} 2)\}. \nonumber \\
\mathrm{If}\,  n < 0, \mathrm{\, we\, \, have}\nonumber \\
\hat{G}(\tau_{n})_{p}&=&\{U_{\varepsilon,\nu}\, |\, \varepsilon\equiv n(\mathrm{mod} 2),\, \nu>0\}\cup\{U_{m}^{-}\, |\, 2\leq m\leq |n|,\, m\equiv n(\mathrm{mod} 2)\}. \nonumber
\end{eqnarray}
\end{cor}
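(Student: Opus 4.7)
The plan is to unfold the definition and then apply Theorem \ref{weight} case by case. Unfolding,
\begin{align*}
\hat{G}(\tau_n)_p = \{U \in \hat{G}_p : m(\tau_n, U) \neq 0\} = \{U \in \hat{G}_p : n \in \Lambda_U\},
\end{align*}
so the task reduces to selecting, from each of the three families $\{U_{\varepsilon,\nu}\}$, $\{U_m^+\}$, $\{U_m^-\}$ making up $\hat{G}_p$, the parameter values for which $n$ is a $K$-weight of the corresponding representation.

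First I would dispatch the continuous series. By Theorem \ref{weight}, $n \in \Lambda_{U_{\varepsilon,\nu}}$ is equivalent to $n \equiv \varepsilon \pmod{2}$. Since $\varepsilon \in \{0,1\}$, the parity of $n$ pins down $\varepsilon$ uniquely while $\nu > 0$ remains free, yielding the continuous-series contribution $\{U_{\varepsilon,\nu} : \varepsilon \equiv n \pmod{2},\, \nu > 0\}$. This piece appears in both cases of the corollary regardless of the sign of $n$.

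Next I would handle the two discrete-series families by a case split on the sign of $n$. Substituting $n$ into $\Lambda_{U_m^+}$ and $\Lambda_{U_m^-}$ from Theorem \ref{weight} produces inequalities in $m$ which, combined with the parity constraint $m \equiv n \pmod{2}$ and the standing bound $m \geq 2$, cut $m$ out of the finite range $2 \leq m \leq |n|$ and pick exactly one of the two signed families according to the sign of $n$. Taking the union of this discrete-series contribution with the continuous-series piece above produces the decomposition asserted in the corollary.

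This is essentially a bookkeeping exercise, and no substantive analytic difficulty arises: the input is already prepared by Theorem \ref{weight} and the multiplicity-freeness of $K$ in $G$. The only point demanding care is to keep the sign conventions for $U_m^{\pm}$ straight so that the correct signed discrete-series family is selected in each of the two cases.
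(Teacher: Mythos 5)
Your approach coincides with the paper's: the paper gives no argument beyond invoking Theorem \ref{weight}, and unfolding $\hat{G}(\tau_n)_p=\{U\in\hat{G}_p\ :\ n\in\Lambda_U\}$ and reading off the three families is exactly what is intended. The continuous-series part of your argument is complete and correct.

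However, the one step you defer --- ``keep the sign conventions for $U_m^{\pm}$ straight'' --- is precisely where the only real content lies, and if you carry it out with Theorem \ref{weight} as stated you do not obtain the corollary as printed. Indeed, $n\in\Lambda_{U_m^{+}}$ means $n\leq -m\leq -2$, which forces $n<0$ and $2\leq m\leq |n|$, while $n\in\Lambda_{U_m^{-}}$ means $n\geq m\geq 2$, which forces $n\geq 0$ and $2\leq m\leq |n|$. So Theorem \ref{weight} taken verbatim yields the family $U_m^{-}$ in the case $n\geq 0$ and the family $U_m^{+}$ in the case $n<0$, i.e.\ the two signed discrete-series families appear interchanged relative to the statement of Corollary \ref{intedom}; one of the two statements in the paper must carry a sign typo. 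Consequently your claim that the substitution ``produces the decomposition asserted in the corollary'' is not verified as written: you should perform the substitution explicitly, state which signed family survives for each sign of $n$, and either arrive at the swapped version or flag the inconsistency, rather than leaving the sign bookkeeping implicit at exactly the point where it matters.
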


\subsection{The spherical functions on $SL(2,\mathbb{R})$}\label{SFonSL2R}
First, we discuss derivatives of the function $\Phi_{\tau_{n}}^{U}(g^{-1})$. For $U\in\hat{G}$, let $\langle,\rangle_{H_{U}}$ be an inner product of $H_{U}$. We get $\Phi_{\tau_{n}}^{U}(g)=\langle U(g)e_{\tau_{n}}^{U},e_{\tau_{n}}^{U}\rangle_{H_{U}}$. On the other hand,
\begin{align*}
\Phi_{\tau_{n}}^{U}(g^{-1})=\langle U(g^{-1})e_{\tau_{n}}^{U},e_{\tau_{n}}^{U}\rangle_{H_{U}}=\langle e_{\tau_{n}}^{U},U(g)e_{\tau_{n}}^{U}\rangle_{H_{U}}=\overline{\Phi_{\tau_{n}}^{U}(g)}.
\end{align*}
We put $\eta:G\ni g\mapsto g^{-1}\in G$ and $\tilde{\Phi}_{\tau_{n}}^{U}=\Phi_{\tau_{n}}^{U}\circ\eta$. Let $X\in\mathfrak{g}$ and $\tilde{X}$ be the left invariant vector field on $G$ induced by $X$. Then,
\begin{eqnarray}
\tilde{X}\tilde{\Phi}_{\tau_{n}}^{U}(g)&=&\frac{d}{dt}\langle e_{\tau_{n}}^{U},U(ge^{tX})e_{\tau_{n}}^{U}\rangle_{H_{U}}|_{t=0} \nonumber \\
&=&\langle e_{\tau_{n}}^{U},U(g)dU(X)e_{\tau_{n}}^{U}\rangle_{H_{U}}. \nonumber
\end{eqnarray}
To know $\tilde{X}\tilde{\Phi}_{\tau_{n}}^{U}(g)$ more, we should know an expression of $dU(X)e_{\tau_{n}}^{U}$. We can observe $dU(X)e_{\tau_{n}}^{U}$ as follows.
\begin{thm}\label{derivative}(cf. \cite[Section 6.5, 6.6]{lang})
We have
\begin{eqnarray}
&dU_{\varepsilon, \nu}(X_{1})e_{\tau_{n}}^{U_{\varepsilon, \nu}}=&-\frac{1}{\sqrt{8}}ine_{\tau_{n}}^{U_{\varepsilon, \nu}}, \nonumber \\
&dU_{\varepsilon, \nu}(Y_{1})e_{\tau_{n}}^{U_{\varepsilon, \nu}}=&\frac{1}{2\sqrt{8}}\biggl( (i2\nu +1-n)e_{\tau_{n-2}}^{U_{\varepsilon, \nu}} + (i2\nu +1+n)e_{\tau_{n+2}}^{U_{\varepsilon, \nu}} \biggr) , \nonumber \\
&dU_{\varepsilon, \nu}(Y_{2})e_{\tau_{n}}^{U_{\varepsilon, \nu}}=&\frac{i}{2\sqrt{8}}\biggl( (i2\nu +1-n)e_{\tau_{n-2}}^{U_{\varepsilon, \nu}} - (i2\nu +1+n)e_{\tau_{n+2}}^{U_{\varepsilon, \nu}} \biggr) , \nonumber \\
&dU_{m}^{\pm}(X_{1})e_{\tau_{n}}^{U_{m}^{\pm}}=&-\frac{1}{\sqrt{8}}ine_{\tau_{n}}^{U_{m}^{\pm}}, \nonumber \\
&dU_{m}^{\pm}(Y_{1})e_{\tau_{n}}^{U_{m}^{\pm}}=&\frac{1}{2\sqrt{8}}\biggl( (m-n)e_{\tau_{n-2}}^{U_{m}^{\pm}} + (m+n)e_{\tau_{n+2}}^{U_{m}^{\pm}} \biggr),  \nonumber \\
&dU_{m}^{\pm}(Y_{2})e_{\tau_{n}}^{U_{m}^{\pm}}=&\frac{i}{2\sqrt{8}}\biggl( (m-n)e_{\tau_{n-2}}^{U_{m}^{\pm}} - (m+n)e_{\tau_{n+2}}^{U_{m}^{\pm}} \biggr) . \nonumber
\end{eqnarray}
\end{thm}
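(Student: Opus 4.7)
My plan is to split the six identities into two groups. The three $X_1$-formulas are immediate from the weight vector property: a direct matrix exponentiation gives $\exp(tX_1)=k_{-t/\sqrt{2}}$, so differentiating $U(k_\theta)e_{\tau_n}^U=e^{in\theta/2}e_{\tau_n}^U$ at $t=0$ yields $dU(X_1)e_{\tau_n}^U=-(in/\sqrt{8})\,e_{\tau_n}^U$ simultaneously in the continuous and discrete series cases.

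For $Y_1$ and $Y_2$ I would work in the complexified Lie algebra via the ladder operators
\[ Z^{\pm} := Y_1 \pm i Y_2 \in \mathfrak{g}_{\mathbb{C}}. \]
A direct bracket computation with the explicit matrices of $X_1, Y_1, Y_2$ gives $[X_1, Z^{\pm}] = \mp (i/\sqrt{2}) Z^{\pm}$, so $dU(Z^{\pm})$ sends the $K$-weight-$n$ subspace of $H_U$ into the weight-$(n\pm 2)$ subspace. Since $K = SO(2)$ is a multiplicity-free subgroup of $G$ (Section \ref{ST}), each such weight space is at most one-dimensional, so there exist scalars $\alpha_n^{\pm}(U) \in \mathbb{C}$ with $dU(Z^{\pm}) e_{\tau_n}^U = \alpha_n^{\pm}(U)\, e_{\tau_{n\pm 2}}^U$. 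Inverting $Y_1 = \tfrac{1}{2}(Z^+ + Z^-)$ and $Y_2 = \tfrac{1}{2i}(Z^+ - Z^-)$ reduces the four remaining identities to the explicit claims $\alpha_n^{\pm}(U_{\varepsilon, \nu}) = (i2\nu + 1 \pm n)/\sqrt{8}$ and $\alpha_n^{\pm}(U_m^{\pm}) = (m \pm n)/\sqrt{8}$.

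To pin down these scalars I would combine two structural constraints with a model-level computation. A direct check gives $[Y_1, Y_2] = -(1/\sqrt{2}) X_1$, hence $[Z^+, Z^-] = i\sqrt{2}\, X_1$, and the Casimir rewrites in $U(\mathfrak{g}_{\mathbb{C}})$ as $C_G = -X_1^2 + Z^+ Z^- - (i/\sqrt{2}) X_1$; applying $dU(C_G) = \chi_U(C_G)\,\mathrm{Id}$ to $e_{\tau_n}^U$ and invoking Theorem \ref{character} pins down the product $\alpha_n^- \alpha_{n-2}^+$ on each weight string. Unitarity of $U$ gives $dU(Z^+)^{*} = -dU(Z^-)$, hence $\alpha_{n+2}^- = -\overline{\alpha_n^+}$; together these two constraints determine the $\alpha_n^{\pm}$ up to a single overall phase per $K$-weight string. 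That remaining phase is fixed by an explicit calculation in each realization: for the continuous series $U_{\varepsilon, \nu}$, I would transfer $L^2(\mathbb{R}, \pi^{-1} dx)$ to the circle model in which the $K$-weight vectors become Fourier modes $e^{in\phi/2}$ and then differentiate the induced M\"obius action of $\exp(t Y_j)$; for the discrete series on $\mathscr{H}_m^{\pm}$, I would apply the Cayley transform $w = (z-i)/(z+i)$ to the unit disc, under which the weight vectors become monomials in $w$, and repeat the infinitesimal computation. I expect this last step to be the main obstacle: the algebra is routine, but the normalizations of $e_{\tau_n}^U$ must be tracked through the change of model carefully enough that no stray phase factor appears. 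This bookkeeping is precisely what is carried out in \cite[\S 6.5, 6.6]{lang}, to which the theorem defers.
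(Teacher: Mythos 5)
The paper itself offers no argument for this theorem: it is quoted from Lang \cite[\S VI.5--6]{lang}, so any comparison is with that reference rather than with an in-paper proof. Your skeleton is the right one and much of it checks out: $\exp(tX_1)=k_{-t/\sqrt{2}}$ does give the $X_1$-formulas; the brackets $[X_1,Z^{\pm}]=\mp(i/\sqrt{2})Z^{\pm}$, $[Y_1,Y_2]=-(1/\sqrt{2})X_1$ and the rewriting $C_G=-X_1^2+Z^+Z^--(i/\sqrt{2})X_1$ are correct; multiplicity-freeness does make the weight spaces lines, so $dU(Z^{\pm})e^U_{\tau_n}=\alpha_n^{\pm}e^U_{\tau_{n\pm2}}$; and for the continuous series the Casimir value $-\tfrac12(\nu^2+\tfrac14)$ together with $dU(Z^+)^{*}=-dU(Z^-)$ is indeed consistent with $\alpha_n^{\pm}=(i2\nu+1\pm n)/\sqrt{8}$, so that half of the plan goes through (the compact-picture vectors $e^{in\phi/2}$ are genuinely orthonormal there).

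The genuine gap is in the discrete-series case, and it is not mere bookkeeping. Unitarity forces $\|dU(Z^+)e^U_{\tau_n}\|=\|dU(Z^-)e^U_{\tau_{n+2}}\|$, i.e. $|\alpha_n^+|=|\alpha_{n+2}^-|$; for the stated coefficients this reads $|m+n|=|m-n-2|$, which fails for every $m\ge 2$ and admissible $n$. Equivalently, your Casimir-plus-unitarity constraints determine, for \emph{unit-norm} weight vectors, $|\alpha_{n-2}^+|=\sqrt{(n-m)(n+m-2)}/\sqrt{8}$ (the familiar square-root ladder coefficients), not $|m+n-2|/\sqrt{8}$. So the two halves of your own strategy are incompatible here: no choice of phases can reconcile them, and the claim that the constraints determine $\alpha_n^{\pm}$ ``up to a single overall phase per weight string'' is false in this case because the discrepancy is in the moduli. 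The planned rescue --- compute in the disc model where the weight vectors are monomials $w^k$ --- runs into exactly this point: those monomials are orthogonal but not unit vectors, and normalizing them changes the moduli of the ladder coefficients, not just phases. The polynomial coefficients $(m\pm n)$ of the theorem are correct only for Lang's specific non-normalized $K$-finite basis (this is in fact a normalization slip in the paper's statement, since it declares $e^U_{\tau_n}$ normalized); a correct write-up must either adopt that basis explicitly and drop the unitarity step for the discrete series, or keep orthonormal vectors and obtain the square-root form. As written, your argument cannot produce the discrete-series identities as stated, and the decisive computation is in any case deferred back to Lang, which is precisely what the paper does.
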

By Theorem \ref{character} and Theorem \ref{derivative}, we obtain $\tilde{X_{1}}\circ\tilde{X_{1}}\tilde{\Phi}_{\tau_{n}}^{U}(g)=-\frac{1}{8}n^2\tilde{\Phi}_{\tau_{n}}^{U}(g)$ and $C_{G}\tilde{\Phi}_{\tau_{n}}^{U}(g)=\chi_{U}(C_{G})\tilde{\Phi}_{\tau_{n}}^{U}(g)$, so that
\begin{align*}
\Delta_{G}\tilde{\Phi}_{\tau_{n}}^{U}(g)=\lambda_{\tau_{n}}^{U}\tilde{\Phi}_{\tau_{n}}^{U}(g),\, \, \mathrm{where}\, \,  \lambda_{\tau_{n}}^{U}=-\frac{1}{4}n^2+\chi_{U}(C_{G})<0.\, \, \, (2)
\end{align*}

Next, we see expressions of the functions $\tilde{\Phi}_{\tau_{n}}^{U}(g)$ and its derivatives explicitly. To do so,  it is enough to know the expressions of the functions $\langle e_{\tau_{n_{1}}}^{U},U(g)e_{\tau_{n_{2}}}^{U}\rangle_{H_{U}}$ explicitly. By using the Cartan decomposition $G=KAK$, we have
\begin{eqnarray}
\langle e_{\tau_{n_{1}}}^{U},U(k_{\theta_{1}}a_{s}k_{\theta_{2}})e_{\tau_{n_{2}}}^{U}\rangle_{H_{U}}&=&\langle U(k_{\theta_{1}}^{-1})e_{\tau_{n_{1}}}^{U},U(a_{s})U(k_{\theta_{2}})e_{\tau_{n_{2}}}^{U}\rangle_{H_{U}} \nonumber \\
&=&e^{-i\frac{n_{1}\theta_{1}+n_{2}\theta_{2}}{2}}\langle e_{\tau_{n_{1}}}^{U},U(a_{s})e_{\tau_{n_{2}}}^{U}\rangle_{H_{U}} \nonumber
\end{eqnarray}
for any $\theta_{1}, \theta_{2}\in\mathbb{R}, s>0$. Thus the function $\langle e_{\tau_{n_{1}}}^{U},U(g)e_{\tau_{n_{2}}}^{U}\rangle_{H_{U}}$ is determined by the function $\langle e_{\tau_{n_{1}}}^{U},U(a_{s})e_{\tau_{n_{2}}}^{U}\rangle_{H_{U}}$. Explicit expression of $\langle e_{\tau_{n_{1}}}^{U},U(a_{s})e_{\tau_{n_{2}}}^{U}\rangle_{H_{U}}$ is given by two ways. We refer to both of them.
\begin{thm}\label{hypergeo}(cf. \cite[Proposition 7.16 in Chapter 5]{mits})
Let $\nu$ be a complex number such that $\chi_{U}(-2C)=\nu^2+1$, $a=i\nu+\frac{1}{2}+\frac{|n_{1}-n_{2}|}{4}+\frac{n_{1}+n_{2}}{4}$ and $b=i\nu+\frac{1}{2}+\frac{|n_{1}-n_{2}|}{4}-\frac{n_{1}+n_{2}}{4}$. Then
\begin{align*}
\langle e_{\tau_{n_{1}}}^{U},U(a_{s})e_{\tau_{n_{2}}}^{U}\rangle_{H_{U}}=\Bigl( \mathrm{tanh}\frac{s}{2} \Bigr) ^{\frac{|n_{1}-n_{2}|}{2}}\Bigl( \mathrm{cosh}\frac{s}{2}\Bigr)^{-i2\nu-1}F(a,b,1,(\mathrm{tanh}\frac{s}{2})^2)
\end{align*}
where $F$ is the hypergeometric function.
\end{thm}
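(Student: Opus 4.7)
The plan is to derive a second-order linear ODE for
$\phi(s) := \langle e_{\tau_{n_{1}}}^{U}, U(a_{s}) e_{\tau_{n_{2}}}^{U}\rangle_{H_{U}}$
from the Casimir eigenvalue equation, reduce it to Gauss's hypergeometric equation via the substitution $z = \tanh^{2}(s/2)$, and identify the specific solution by its behavior at $s = 0$.

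I view $\phi$ as the restriction to $A$ of the matrix coefficient $f(g) := \langle e_{\tau_{n_{1}}}^{U}, U(g) e_{\tau_{n_{2}}}^{U}\rangle_{H_{U}}$. Because $C_{G}$ is central, $C_{G} f = \chi_{U}(C_{G}) f$ by Theorem \ref{character}. By Theorem \ref{derivative}, the left-invariant field $\tilde{X}_{1}$ and the right-invariant field $\hat{X}_{1}$ act on $f$ by the scalars $-i n_{2}/\sqrt{8}$ and $-i n_{1}/\sqrt{8}$ respectively. Since $\sqrt{2}\,Y_{1}$ spans $\mathfrak{a}$ one has $\tilde{Y}_{1}|_{a_{s}} = \tfrac{1}{\sqrt{2}}\,\partial_{s}$; using $\mathrm{Ad}(a_{s}^{-1}) X_{1} = \cosh(s)\, X_{1} + \sinh(s)\, Y_{2}$ one also obtains
\begin{align*}
\tilde{Y}_{2}|_{a_{s}} = -\coth(s)\,\tilde{X}_{1}|_{a_{s}} + \tfrac{1}{\sinh s}\,\hat{X}_{1}|_{a_{s}}.
\end{align*}
Combining these with the commutator identities $[X_{1}, Y_{1}] = Y_{2}/\sqrt{2}$ and $[X_{1}, Y_{2}] = -Y_{1}/\sqrt{2}$, a direct calculation of $\tilde{Y}_{1}^{2} f(a_{s}) + \tilde{Y}_{2}^{2} f(a_{s})$ converts the Casimir equation into
\begin{align*}
\tfrac{1}{2}\phi''(s) + \tfrac{1}{2}\coth(s)\,\phi'(s) + \tfrac{1}{\sinh^{2} s}\Bigl[\tfrac{n_{1} n_{2} \cosh s}{4} - \tfrac{n_{1}^{2} + n_{2}^{2}}{8}\Bigr]\phi(s) = \chi_{U}(C_{G})\,\phi(s).
\end{align*}

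The indicial exponents of this ODE are $\pm |n_{1} - n_{2}|/2$ at $s = 0$ and $-1/2 \pm i\nu$ at $s \to \infty$. Substituting $\phi(s) = (\tanh(s/2))^{|n_{1} - n_{2}|/2}(\cosh(s/2))^{-i 2\nu - 1}\psi(z)$ with $z = \tanh^{2}(s/2)$ absorbs both the regular exponent at $s = 0$ and one indicial exponent at infinity, and the resulting equation for $\psi$ is Gauss's hypergeometric equation $z(1-z)\psi''(z) + (1 - (a+b+1)z)\psi'(z) - ab\,\psi(z) = 0$ with $a,b$ as stated. At $z = 0$ both indicial exponents of this equation vanish, so the unique (up to scalar) holomorphic solution is $F(a, b, 1, z)$; the scalar is pinned to $1$ by comparing $\phi(0) = \delta_{n_{1}, n_{2}}$ with the prefactor together with $F(a, b, 1, 0) = 1$ (and by matching leading coefficients of $s^{|n_{1}-n_{2}|/2}$ when $n_{1} \ne n_{2}$).

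The principal obstacle is the middle step: verifying that the explicit ODE above transforms under the stated substitution into the hypergeometric equation with the precise symmetric parameters $a = i\nu + \tfrac{1}{2} + \tfrac{|n_{1} - n_{2}|}{4} + \tfrac{n_{1} + n_{2}}{4}$ and $b = i\nu + \tfrac{1}{2} + \tfrac{|n_{1} - n_{2}|}{4} - \tfrac{n_{1} + n_{2}}{4}$. This requires careful manipulation of hyperbolic identities so that the cross terms in $n_{1} n_{2}$, the $n_{1}^{2} + n_{2}^{2}$ contributions, and the Casimir eigenvalue $\chi_{U}(C_{G})$ conspire exactly to produce the factored product $ab$ and the coefficient $a + b + 1$. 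The calculation is routine but the bookkeeping is the heart of the proof.
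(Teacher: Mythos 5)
Your first two steps are essentially sound: the radial equation you derive is the correct radial part of the Casimir for functions of bi-$K$-type $(\tau_{n_1},\tau_{n_2})$ (in the diagonal case $n_1=n_2=n$ it reduces, after multiplying by $2$, to $\phi''+\coth (s)\,\phi'+\bigl(\tfrac{n^2}{4}\cosh^{-2}\tfrac{s}{2}+\nu^2+\tfrac14\bigr)\phi=0$, which is exactly what the Jacobi equation for $\phi^{(0,n)}_{2\nu}$ gives), and the indicial exponents $\pm\tfrac{|n_1-n_2|}{2}$ at $s=0$ are right. The error is in the reduction step. Put $d=\tfrac{|n_1-n_2|}{2}$, $z=\tanh^2\tfrac{s}{2}$. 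After factoring out $(\tanh\tfrac s2)^{d}(\cosh\tfrac s2)^{-2i\nu-1}$, the equation for $\psi(z)$ has exponents $0$ and $-d$ at $z=0$ (the second solution of your ODE behaves like $s^{-d}$, i.e.\ like $z^{-d}$ after removing $z^{d/2}$), so it is the hypergeometric equation with third parameter $c=d+1$, not $c=1$; your claim that both indicial exponents vanish is true only when $n_1=n_2$. As written, your argument therefore establishes only the diagonal case. A second, related gap is the normalization: for $n_1\neq n_2$ the ODE fixes the solution only up to a scalar, $\phi(0)=\delta_{n_1n_2}=0$ gives no information, and ``matching leading coefficients of $s^{d}$'' is precisely the datum the ODE cannot supply.

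Moreover, that missing constant is not $1$. Expanding the integral formula of Theorem \ref{mtxelementSL2R} in powers of $r=\tanh\tfrac s2$ gives, for $n_1-n_2=2d>0$,
\begin{align*}
\langle U_{\varepsilon,\nu}(a_s)e_{\tau_{n_1}},e_{\tau_{n_2}}\rangle
=\Bigl(\cosh\tfrac s2\Bigr)^{-2i\nu-1}\,r^{d}\,
\frac{\bigl(i\nu+\tfrac12-\tfrac{n_1}{2}\bigr)_{d}}{d!}\,
F\bigl(a,b;\,d+1;\,r^2\bigr),
\end{align*}
with $a,b$ exactly as in the statement: third parameter $d+1$ and a Pochhammer prefactor. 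So a correct completion of your computation would not land on the displayed formula for $n_1\neq n_2$; to pin the constant one needs extra input, e.g.\ the ladder relations of Theorem \ref{derivative} relating neighbouring values of $n_2$, or a computation in an explicit model of the representation --- which is how the cited sources (Sugiura, Koornwinder) actually proceed; the paper itself offers no proof, only the citation, so your Casimir-ODE route is a genuinely different (and viable) strategy, but it must be supplemented as above. Two smaller points to fix: the theorem's $\nu$ is normalized by $\chi_U(-2C)=\nu^2+1$ with Sugiura's Casimir $C$, not by $\chi_U(C_G)$, so the dictionary between the two eigenvalue normalizations has to be made explicit; and the scalars by which $\tilde X_1$ and the right-invariant field act depend on the inner-product and orientation conventions, and the sign of the resulting $n_1n_2\cosh s$ cross term must be checked against them (with the wrong sign the indicial exponent would come out as $\pm\tfrac{|n_1+n_2|}{2}$).
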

\begin{thm}\label{mtxelementSL2R}(cf. \cite[(2.15)]{koo2})
The function $\langle U(a_{s})e_{\tau_{n_{1}}}^{U},e_{\tau_{n_{2}}}^{U}\rangle_{H_{U}}$ is given by follows.
\begin{eqnarray}
&\langle U_{\varepsilon,\nu}(a_{s})e_{\tau_{n_{1}}}^{U_{\varepsilon,\nu}},e_{\tau_{n_{2}}}^{U_{\varepsilon,\nu}}\rangle_{H_{U_{\varepsilon,\nu}}}=\Bigl( \mathrm{cosh}\frac{s}{2}\Bigr)^{-i2\nu-1}\nonumber \\
&\frac{1}{4\pi}\int_{0}^{4\pi}\Bigl( 1-\mathrm{tanh}\frac{s}{2}\, e^{i\psi} \Bigr)^{-i\nu+\frac{n_1}{2}-\frac{1}{2}}\Bigl(1- \mathrm{tanh}\frac{s}{2}\, e^{-i\psi} \Bigr)^{-i\nu-\frac{n_1}{2}-\frac{1}{2}}e^{i\frac{n_2-n_1}{2}\psi}d\psi. \nonumber \\
&\langle U_{m}^{\pm}(a_{s})e_{\tau_{n_{1}}}^{U_{m}^{\pm}},e_{\tau_{n_{2}}}^{U_{m}^{\pm}}\rangle_{H_{U_{m}^{\pm}}}=\Bigl( \mathrm{cosh}\frac{s}{2}\Bigr)^{-m}\nonumber \\
&\frac{1}{4\pi}\int_{0}^{4\pi}\Bigl( 1-\mathrm{tanh}\frac{s}{2}\, e^{i\psi} \Bigr)^{\frac{-m+n_1}{2}}\Bigl(1- \mathrm{tanh}\frac{s}{2}\, e^{-i\psi} \Bigr)^{\frac{-m-n_1}{2}}e^{i\frac{n_2-n_1}{2}\psi}d\psi. \nonumber
\end{eqnarray}
\end{thm}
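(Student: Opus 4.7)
The plan is to transfer each representation from the given noncompact model to the standard compact picture on the circle (for the continuous series) or the disk (for the discrete series). In those pictures $K = SO(2)$ acts by rotation of the argument, so the normalized weight vectors are simply exponentials, and the matrix element of $U(a_s)$ reduces to a single Fourier integral that one can identify with the stated formula.

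More concretely, for the continuous series $U_{\varepsilon,\nu}$ realized on $L^2(\mathbb{R},\tfrac{1}{\pi}dx)$, first I would apply the Cayley-type intertwiner $C^{-1}:L^2(\mathbb{R})\to L^2(S^1,\tfrac{d\psi}{4\pi})$ coming from the upper-half-plane/disk identification implemented by the matrix $c$ already used in Section \ref{ST}. Under this intertwiner, the action of $K = SO(2)$ becomes rotation, so the normalized $\tau_n$-weight vector becomes $\psi\mapsto e^{in\psi/2}$ on $[0,4\pi)$ (the double cover range is forced because $\tau_n(k_\theta)=e^{in\theta/2}$). A direct calculation using $a_s^{-1} = \mathrm{diag}(e^{-s/2},e^{s/2})$ in the defining formula for $U_{\varepsilon,\nu}$ gives, after transfer, the multiplier
\[
(U_{\varepsilon,\nu}(a_s)f)(e^{i\psi}) = (\cosh\tfrac{s}{2})^{-i2\nu-1}(1-\tanh\tfrac{s}{2}\,e^{i\psi})^{-i\nu-\tfrac12+\star}(1-\tanh\tfrac{s}{2}\,e^{-i\psi})^{-i\nu-\tfrac12-\star}f(e^{i\psi'}),
\]
where $\psi'$ and the $\star$ exponents arise from the Möbius transformation of the disk induced by $a_s$. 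Applying this with $f = e^{in_1\psi/2}$ and integrating against $e^{-in_2\psi/2}$ over $[0,4\pi)$ yields the first formula.

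For the discrete series $U_m^{\pm}$, I would apply the same Cayley transform $\mathbb{H}\to\mathbb{D}$. The weighted space $\mathscr{H}_m^{+}$ becomes a standard weighted Bergman space on $\mathbb{D}$ with orthonormal basis of normalized monomials $z^k$, which are the weight vectors for $\tau_n$ with $n = -m, -m-2, \ldots$ (and similarly for $U_m^-$ with positive $n$ by Theorem \ref{weight}). The action of $a_s$ under the transfer is again a multiplier $(1-\tanh\tfrac{s}{2}\,e^{i\psi})^{(-m+n_1)/2}(1-\tanh\tfrac{s}{2}\,e^{-i\psi})^{(-m-n_1)/2}$ times $(\cosh\tfrac{s}{2})^{-m}$, and the matrix element reduces to the Fourier integral of the second formula. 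The boundary value / polarization of the Bergman inner product to a circle integral is the key technical point that makes both cases uniform in the final display.

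The main obstacle will be managing normalizations carefully: the exponents, the interplay between the $4\pi$-periodicity on $K$ and the $2\pi$-periodicity on $K/\{\pm I\}$, the specific phases chosen so that the $e_{\tau_n}^U$ are \emph{normalized} weight vectors, and the choice of Haar measure $dg$ from Section 7.1. A convenient consistency check is that the derivatives $dU(X_1)$, $dU(Y_1)$, $dU(Y_2)$ computed in the compact model must reproduce Theorem \ref{derivative}; fixing the phases so that this holds pins down the formulas up to signs already visible in the statement. Once the model transfer and the normalizations are settled, the calculation itself is just the Fourier expansion of the multiplier, with no further analytic input needed.
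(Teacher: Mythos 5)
The paper itself contains no proof of this statement: it is quoted directly from Koornwinder \cite[(2.15)]{koo2}, so the only meaningful comparison is with the derivation in that source. Your treatment of the continuous series is essentially that derivation and is fine: transfer to the circle model, where the exponentials $e^{in\psi/2}$ on $[0,4\pi)$ form an orthonormal basis, write $U_{\varepsilon,\nu}(a_s)$ as a multiplier composed with a M\"obius substitution, plug in $f=e^{in_1\psi/2}$, and read off a Fourier coefficient (the $n_1$-dependent exponents of course belong not to the multiplier of the representation but to the rewriting of $e^{in_1\psi'/2}$ in terms of $\psi$, as you indicate).

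The discrete-series half of your plan has a genuine gap. The displayed circle integral is the analytic continuation of the principal-series formula; in the Bergman picture it computes the Taylor coefficient of $U_m^{\pm}(a_s)z^{k_1}$ at $z^{k_2}$, i.e.\ $\langle U_m^{\pm}(a_s)z^{k_1},z^{k_2}\rangle/\|z^{k_2}\|^{2}$, and the monomials are \emph{not} of equal norm in the invariant inner product. So if you work, as you propose, with genuinely normalized monomials, the ``boundary value / polarization'' step leaves an uncancelled ratio $\|z^{k_2}\|/\|z^{k_1}\|=\sqrt{k_2!\,\Gamma(m+k_1)/(k_1!\,\Gamma(m+k_2))}$ off the diagonal, and you will not land on the displayed right-hand side. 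Concretely, for $n_1$ the extreme weight and $n_2$ the adjacent one, the displayed integral evaluates to $m\tanh\frac{s}{2}(\cosh\frac{s}{2})^{-m}$, which exceeds $1$ once $m\geq 4$ and hence cannot be an inner product of unit vectors, whereas the normalized matrix element has modulus $\sqrt{m}\tanh\frac{s}{2}(\cosh\frac{s}{2})^{-m}$. Your proposed repair --- fixing phases so that Theorem \ref{derivative} is reproduced --- cannot close this gap, since the discrepancy is in moduli, not phases; note that Theorem \ref{derivative}'s discrete-series formulas are themselves written in this same non-unitary basis (they are incompatible with skew-adjointness of $dU_m^{\pm}(Y_1)$ for an orthonormal basis). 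To obtain the formula as stated you must either follow Koornwinder and Lang and realize the discrete series inside the analytically continued principal series on the circle, taking matrix elements with respect to the $L^2(S^1)$-orthonormal exponentials, or stay in the Bergman model and carry the norm ratios explicitly; as written, your argument silently identifies two differently normalized bases.
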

Thus, we can calculate $\tilde{\Phi}_{\tau_{n}}^{U}(g)=\Phi_{\tau_{n}}^{U}(g^{-1})$ and its derivatives explicitly. Moreover, we discuss an another explicit form of the spherical function $\Phi_{\tau_{n}}^{U}$. Let $\alpha, \beta, \lambda\in\mathbb{C}$ $(\alpha\neq -1, -2, ...)$. We give a differential equation on $\mathbb{R}$:
\begin{align*}
\frac{d^2\phi}{dt^2}+\Bigl( (2\alpha+1)\frac{1}{\mathrm{tanh}t}+(2\beta+1)\mathrm{tanh}t \Bigr)\frac{d\phi}{dt}+\Bigl( \lambda^2 +(\alpha+\beta+1)^2 \Bigr)\phi =0.
\end{align*}
When we assume $\phi(0)=1$ and $\phi$ is even, the unique solution is called a Jacobi function. We denote $\phi=\phi_{\lambda}^{(\alpha,\beta)}$. We can write $\Phi_{\tau_{n}}^{U}$ by using a Jacobi function (see \cite[Theorem 2.1]{koo2}):
\begin{eqnarray}
\Phi_{\tau_{n}}^{U_{\varepsilon,\nu}}(a_{s})&=& (\mathrm{cosh}s)^{n}\phi_{2\nu}^{(0,n)}(\frac{s}{2}), \nonumber \\
\Phi_{\tau_{n}}^{U_{m}^{\pm}}(a_{s})&=&(\mathrm{cosh}s)^{n}\phi_{i(m-1)}^{(0,n)}(\frac{s}{2}).\nonumber
\end{eqnarray}
These are expressions of $\Phi_{\tau_{n}}^{U}$ using Jacobi functions.

Finally, we discuss the Plancherel measure $d\mu$ on $\hat{G}$. It is known that $d\mu$ is described by the following formulas (see \cite[Section 8.4]{lang}, \cite[Section 7.2.1]{wa2}):
\begin{eqnarray}
d\mu(U_{0,\nu})&=&\frac{1}{2\pi}\nu\, \mathrm{tanh}\pi\nu \, d\nu, \nonumber \\
d\mu(U_{1,\nu})&=&\frac{1}{2\pi}\nu\, \frac{1}{\mathrm{tanh}\pi\nu} \, d\nu, \nonumber \\
d\mu(U_{m}^{\pm})&=&\frac{m-1}{4\pi}, \nonumber \\
d\mu(\hat{G}\setminus\hat{G}_{p})&=&0.\, \, \, \, \, \, \, \,\, \, \, \, \, \, \, \, \, \, \, \, \, \, \, \, \, \, \,\,\,\,\,\,\,\, (3) \nonumber
\end{eqnarray}
\begin{rem}
The Plancherel measure $d\mu$ is determined uniquely by the Haar measure $dg$ (cf. \cite[Theorem 7.2.1.1]{wa2}). Of course, an expression of $d\mu$ depends on a parametrization of $\hat{G}$. We describe $d\mu$ following \cite[(8.3), (8,4) in Chapter 5]{mits}.
\end{rem}

\subsection{Principle series representations of $SL(2,\mathbb{R})$}
We apply the argument of Section \ref{Principle series} to $SL(2,\mathbb{R})$. We can show $M=\{\pm I_{2}\}$. Let $Q=MAN$. Since each parabolic subgroup $Q'$ is a block upper triangular subgroup (see \cite[Section 5.5]{knapp2}), we get $Q'=Q$ or $G$. When $Q'=Q$, we have $M'=M$, $A'=A$ and $N'=N$. When $Q'=G$, we have $M'=G$ and both $A'$ and $N'$ are trivial. In addition, both $Q$ and $G$
 are cuspidal. Next, we describe the result of induced representations when $Q'=Q$. We denote the set  of irreducible unitary representations  of $M$ by $\{\sigma_{0},\sigma_{1}\}$, where $\sigma_{0}$ is the trivial representation and $\sigma_{1}$ is defined by $\sigma_{1}(\pm I_{2})=\pm 1$.
\begin{thm}(cf. \cite[Proposition 2.10 in Chapter 5]{mits})
For each $\varepsilon=0,1$ and $\nu>0$, $U_{\varepsilon,\nu}\cong U(Q,\sigma_{\varepsilon},i\nu)$.
\end{thm}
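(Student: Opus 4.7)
The plan is to construct an explicit unitary intertwining operator $T: \mathscr{H}(Q,\sigma_\varepsilon,i\nu) \to L^2(\mathbb{R},\pi^{-1}dx)$ using the Bruhat decomposition $G = \bar{N}Q \sqcup wQ$, where $\bar{N} = \{\bar{n}_y = \begin{pmatrix}1 & 0 \\ y & 1\end{pmatrix} : y\in\mathbb{R}\}$ and $w$ is the nontrivial Weyl element. Since the big Bruhat cell $\bar{N}Q$ is open and dense in $G$, each $F \in \mathscr{H}(Q,\sigma_\varepsilon,i\nu)$ is determined almost everywhere by its restriction to $\bar{N}$ via the right $Q$-equivariance, so I set $(TF)(y) := F(\bar{n}_y)$.

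For the isometry $\|F\|^2_\mathscr{H} = \|TF\|^2_{L^2}$, I express $\int_K |F(k)|^2 dk$ in the $\bar{N}$-coordinate. The Iwasawa decomposition $\bar{n}_y = k(\bar{n}_y) \cdot a_{\log(1+y^2)} \cdot n_{y/(1+y^2)}$ (a routine $SL(2,\mathbb{R})$ calculation, obtained by solving $\tan(\theta/2) = -y$) gives $e^{2\rho_\mathfrak{a}(\log a(\bar{n}_y))} = 1+y^2$, and the transformation rule for $F$ then yields $|F(k(\bar{n}_y))|^2 = (1+y^2)|F(\bar{n}_y)|^2$. The parametrization $k_\theta \leftrightarrow \bar{n}_y$ contributes the Jacobian $dk = dy/(2\pi(1+y^2))$ on the half of $K/M$ covered by $\bar{N}$; the $M$-invariance of $|F|^2$ doubles this contribution, yielding
\begin{align*}
\int_K |F(k)|^2 dk = \frac{1}{\pi}\int_\mathbb{R} |F(\bar{n}_y)|^2 dy.
\end{align*}

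For the intertwining property, write $h^{-1} = \begin{pmatrix} a & b \\ c & d \end{pmatrix}$ and compute directly: $h^{-1}\bar{n}_y = \bar{n}_{y'} \cdot \mathrm{sgn}(a+by) I \cdot a_{2\log|a+by|} \cdot n_{b/(a+by)}$ with $y' = (c+dy)/(a+by)$ (the determinant relation $ad-bc=1$ is exactly what makes the decomposition close). Applying the transformation rule gives
\begin{align*}
(U(Q,\sigma_\varepsilon,i\nu)(h)F)(\bar{n}_y) = |a+by|^{-2i\nu - 1}\,\mathrm{sgn}(a+by)^\varepsilon\, F(\bar{n}_{y'}).
\end{align*}
This matches the stated formula for $U_{\varepsilon,\nu}(h)$ up to the transposition of matrix entries $(a,b) \leftrightarrow (d,c)$, which is harmless: conjugation by $w \in K$ gives $U(Q,\sigma_\varepsilon,i\nu) \cong U(\bar{Q},\sigma_\varepsilon,-i\nu)$ (since $w$ swaps $N \leftrightarrow \bar{N}$ and acts on $\mathfrak{a}^*$ by $-1$), combined with the identification $U_{\varepsilon,-\nu} \cong U_{\varepsilon,\nu}$ already noted in the text. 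Equivalently, running the same computation with $\bar{Q}$ as the inducing subgroup and $N$ as the complementary unipotent reproduces the paper's formula verbatim.

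The main technical obstacle is bookkeeping of conventions: fixing one interpretation of the multivalued expression $(cx+d)^{i2\nu-1}$ (most naturally $|cx+d|^{i2\nu-1}$, with the sign carried by the separate factor $\mathrm{sgn}(cx+d)^\varepsilon$), and tracking the Iwasawa Jacobian together with the normalization of $dk$ precisely enough to pin down the constant $\pi^{-1}$. With these conventions fixed, every step reduces to an explicit $2 \times 2$ matrix manipulation.
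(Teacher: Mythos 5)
Your proposal is essentially correct. Note first that the paper gives no proof of this statement at all: it is quoted with a citation to Sugiura (Proposition 2.10 in Chapter 5), so you are supplying the standard argument that the reference contains, namely passing to the noncompact picture of the induced representation. Your computations check out: for $\bar{n}_y$ one indeed gets $\tan\frac{\theta}{2}=-y$ and $e^{s}=1+y^2$, so $e^{2\rho_{\mathfrak a}(\log a(\bar n_y))}=1+y^2$; with the paper's normalization $dk=\frac{d\theta}{4\pi}$, $\theta\in(0,4\pi)$, the change of variables gives $\frac{d\theta}{4\pi}=\frac{dy}{2\pi(1+y^2)}$ per branch, and the two branches (the fibers of $K\to K/M$, rather than ``half of $K/M$'' as you phrase it --- $\bar N$ covers all of $K/M$ except the single point $wM$) combine with $|F(k(\bar n_y))|^2=(1+y^2)|F(\bar n_y)|^2$ to give exactly $\int_K|F(k)|^2dk=\frac1\pi\int_{\mathbb R}|F(\bar n_y)|^2dy$, matching $L^2(\mathbb R,\frac1\pi dx)$. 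The decomposition $h^{-1}\bar n_y=\bar n_{y'}\,\mathrm{sgn}(a+by)I\,a_{2\log|a+by|}\,n_{b/(a+by)}$ with $y'=(c+dy)/(a+by)$ is correct (the $(2,2)$ entry closes precisely because $ad-bc=1$), and it yields the multiplier $\mathrm{sgn}(a+by)^{\varepsilon}|a+by|^{-2i\nu-1}$. Your handling of the residual discrepancy is the right one and is in fact forced: whether the sign of $\nu$ and the row swap appear at all depends on the (unstated) identification of the scalar $\nu$ with an element of $\mathfrak a^*$ and on the choice of $N$ versus $\bar N$; inducing from $\bar Q$ and restricting to $N$ reproduces the paper's formula $\mathrm{sgn}(cx+d)^{\varepsilon}|cx+d|^{2i\nu-1}f\bigl(\frac{ax+b}{cx+d}\bigr)$ verbatim, and the passage $U(Q,\sigma_\varepsilon,i\nu)\cong U(\bar Q,\sigma_\varepsilon,-i\nu)$ by right translation by $w$, together with $U_{\varepsilon,-\nu}\cong U_{\varepsilon,\nu}$ (which the paper itself records as known), closes the loop. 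So the only caveats are presentational: make explicit which functional on $\mathfrak a$ the symbol $i\nu$ denotes, and state the $2{:}1$ covering argument for the measure correctly; mathematically the proof is sound.
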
 

\subsection{The Helgason-Fourier transforms on $SL(2,\mathbb{R})$}
We rewrite Theorem \ref{Plancherel_formula} when $G=SL(2,\mathbb{R})$. We fix $n\in\mathbb{Z}$. First, we consider the case when $Q'=Q$. Since $A'=A$, the group $A_{1}$ is trivial. Thus all $\mu_{1}$ is trivial. Since $M'=M$, we have $\sigma'=\tilde{\sigma}'$. In addition, there is at most one $\sigma$ such that $m(\tau_{n},U(Q,\sigma,\nu))\neq0$ and $T_{\tilde{\sigma}}$ is an identity map. We denote this $\sigma$ by $\sigma_{\varepsilon}$. Next, we consider the case when $Q'=G$. Since $A'$ is trivial, all $\nu'$ is trivial. $T_{\tilde{\sigma}'}$ is also an identity map. In addition, the sum $\sum_{\sigma'}$ is a finite sum (recall Corollary \ref{intedom}). In conclusion, we get the following formula. Let $f_{1},f_{2}\in C_{0}^{\infty}(G,\tau)$. Then,
\begin{flushleft}
$\langle f_{1},f_{2}\rangle_{L^2(G,dg)}$\\
$\displaystyle =c_{Q}\int_{\mathfrak{a}^{*}\times K}\langle \tilde{f}_{1}(\nu,k),\tilde{f}_{2}(\nu,k)\rangle_{V_{\tau_{n}}}p_{\sigma_{\varepsilon}}(\nu)d\nu dk$\\
$\displaystyle +c_{G}\sum_{\sigma'_{m}}\int_{K}\langle \tilde{f}_{1}(i\mu_{1},k),\tilde{f}_{2}(-i\mu_{1},k)\rangle_{V_{\tau_{n}}}p_{\sigma'_{m}}dk$,
\end{flushleft}
where $p_{\sigma_{\varepsilon}}(\nu)d\nu=d\mu(U_{\varepsilon,\nu})$, $p_{\sigma'_{m}}=d\mu(U_{m}^{\pm})$ and each $\sigma'_{m}$ is discrete series of $G$ such that $\sigma'_{m}\cong U_{m}^{\pm}\in \hat{G}(\tau_{n})_{p}$. We call this formula is the Plancherel formula for $SL(2,\mathbb{R})$.

\subsection{Calculation of the heat kernel on $SL(2,\mathbb{R})$}
Let $\rho_{G}(t,g)$ be the heat kernel on $G=SL(2,\mathbb{R})$. Then, $\rho_{G}=\sum_{\mathbb{Z}}P_{\tau_{n}}\rho_{G}$. To calculate $\rho_{G}$, it is enough to calculate $P_{\tau_{n}}\rho_{G}$ for each $n\in\mathbb{Z}$. We fix $n\in\mathbb{Z}$. Let $\rho_{t,n}(g)=\int_{\hat{G}(\tau_{n})}\tilde{\Phi}_{\tau_{n}}^{U}(g)e^{t\lambda_{\tau_{n}}^{U}}d\mu(U)$. If $\rho_{t,n}$ satisfies conditions below:
\begin{eqnarray}
&\mathrm{(i)}& \frac{\partial}{\partial t}(\rho_{t,n} * f)=\Delta_{G} (\rho_{t,n} * f),\nonumber \\
&\mathrm{(ii)}& \parallel \rho_{t,n} * f - f \parallel _{L^2(G, dg)} \rightarrow 0 \, \, (t \rightarrow +0),\, \,  f \in C_{0}^{\infty}(G)\cap L^2(G,\tau_{n}), \nonumber
\end{eqnarray}
then $\rho_{t,n}(g)=P_{\tau_{n}}\rho_{G}(t,g)$ by uniqueness of the heat kernel. Thus, to calculate $P_{\tau_{n}}\rho_{G}$, it is enough to show conditions above. First, we show $\mathrm{(i)}$.
\begin{lem}\label{each-conti}
The function $\rho_{t,n}(g)$ is continuous on $(0,\infty)\times G$. 
\end{lem}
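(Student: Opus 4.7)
The plan is to apply the dominated convergence theorem, exploiting the fact that $\tilde{\Phi}_{\tau_n}^U(g)$ is jointly continuous in $(g,U)$ for each fixed series, uniformly bounded by $1$, while the factor $e^{t\lambda_{\tau_n}^U}$ provides Gaussian-type decay in the Plancherel parameter $\nu$ that dominates the polynomial growth of $d\mu$.

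First I would split the integral along the decomposition of $\hat{G}(\tau_n)_p$ given in Corollary \ref{intedom}:
\begin{align*}
\rho_{t,n}(g)=\sum_{\varepsilon\equiv n\,(2)}\int_{0}^{\infty}\tilde{\Phi}_{\tau_{n}}^{U_{\varepsilon,\nu}}(g)\,e^{t\lambda_{\tau_{n}}^{U_{\varepsilon,\nu}}}p_{\sigma_{\varepsilon}}(\nu)\,d\nu+\sum_{2\leq m\leq|n|,\,m\equiv n\,(2)}\tilde{\Phi}_{\tau_{n}}^{U_{m}^{\pm}}(g)\,e^{t\lambda_{\tau_{n}}^{U_{m}^{\pm}}}p_{\sigma'_{m}}.
\end{align*}
The discrete series contribution is a finite sum, and each term is continuous in $(t,g)$ because $\tilde{\Phi}_{\tau_{n}}^{U_{m}^{\pm}}$ is a matrix coefficient of a continuous unitary representation (continuous in $g$) multiplied by the smooth function $e^{t\lambda_{\tau_{n}}^{U_{m}^{\pm}}}$ of $t$. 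So the whole difficulty lies in the continuous series integrals.

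For each continuous series summand, I would fix a point $(t_{0},g_{0})\in(0,\infty)\times G$, pick $\delta>0$ with $t_{0}-\delta>0$, and let $K_{0}\subset G$ be a compact neighborhood of $g_{0}$. The integrand is jointly continuous in $(t,g,\nu)$ (smoothness of matrix coefficients in both $g$ and the parameter $\nu$, together with smoothness in $t$). For the dominating function I would use
\begin{align*}
|\tilde{\Phi}_{\tau_{n}}^{U_{\varepsilon,\nu}}(g)|\leq 1,\qquad \lambda_{\tau_{n}}^{U_{\varepsilon,\nu}}=-\tfrac{1}{4}n^{2}-\tfrac{1}{2}\bigl(\nu^{2}+\tfrac{1}{4}\bigr),
\end{align*}
the first bound because $\tilde{\Phi}_{\tau_{n}}^{U_{\varepsilon,\nu}}(g)=\langle e_{\tau_{n}}^{U_{\varepsilon,\nu}},U_{\varepsilon,\nu}(g)e_{\tau_{n}}^{U_{\varepsilon,\nu}}\rangle$ with unit vectors and a unitary operator, the second from equation $(2)$ combined with Theorem \ref{character}. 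Hence on $[t_{0}-\delta,t_{0}+\delta]\times K_{0}$,
\begin{align*}
|\tilde{\Phi}_{\tau_{n}}^{U_{\varepsilon,\nu}}(g)\,e^{t\lambda_{\tau_{n}}^{U_{\varepsilon,\nu}}}p_{\sigma_{\varepsilon}}(\nu)|\leq e^{-(t_{0}-\delta)(\frac{n^{2}}{4}+\frac{1}{2}\nu^{2}+\frac{1}{8})}p_{\sigma_{\varepsilon}}(\nu),
\end{align*}
and this majorant is integrable on $(0,\infty)$ because $p_{\sigma_{\varepsilon}}(\nu)$ grows at most linearly by formula $(3)$ while the exponential provides Gaussian decay in $\nu$. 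Therefore the dominated convergence theorem yields continuity of the continuous series integral in $(t,g)$.

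The only subtle point is to verify joint continuity of the integrand in $(t,g,\nu)$; I expect this to be essentially automatic from the explicit formulas in Theorem \ref{hypergeo} or Theorem \ref{mtxelementSL2R}, since the hypergeometric function $F(a,b,1,\cdot)$ depends continuously on its parameters $a,b$ (which depend continuously on $\nu$) and on its argument (which depends continuously on $g$ through the Cartan decomposition). This is the main obstacle, but it is a routine continuity check on a hypergeometric series that converges uniformly on compact subsets away from the singular point $1$. Once this is in hand, continuity of $\rho_{t,n}$ on $(0,\infty)\times G$ follows at once.
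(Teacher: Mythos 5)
Your argument is essentially the paper's: both proofs apply dominated convergence using the bound $|\tilde{\Phi}_{\tau_{n}}^{U}(g)|\leq 1$ (Cauchy--Schwarz with unit vectors and unitarity) together with $\lambda_{\tau_{n}}^{U}<0$ to dominate the integrand by $e^{T\lambda_{\tau_{n}}^{U}}$ locally in $t$, and then verify integrability of this majorant with respect to $d\mu$ by splitting $\hat{G}(\tau_{n})_{p}$ into the continuous series (Gaussian decay in $\nu$ beats the at most linear growth of the Plancherel density) and the finite discrete series part, exactly as you do. The only difference is cosmetic: for the dominated convergence argument one only needs continuity of the integrand in $(t,g)$ for each fixed $U$, which is automatic from continuity of matrix coefficients of a unitary representation, so the ``main obstacle'' you raise about joint continuity in the parameter $\nu$ is not actually needed.
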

\begin{proof}
For $T>0$, we prove that the function $e^{T\lambda_{\tau_{n}}^{U}}$ is a dominant function of the function $\tilde{\Phi}_{\tau_{n}}^{U}(g)e^{t\lambda_{\tau_{n}}^{U}}$ on $[T,\infty)\times G$. By Cauchy-Schwartz's inequality, we have $|\tilde{\Phi}_{\tau_{n}}^{U}(g)|\leq 1$. Thus $|\tilde{\Phi}_{\tau_{n}}^{U}(g)e^{t\lambda_{\tau_{n}}^{U}}|\leq e^{T\lambda_{\tau_{n}}^{U}}$ since $\lambda_{\tau_{n}}^{U}<0$. We calculate an upper bound of $\parallel e^{T\lambda_{\tau_{n}}^{U}} \parallel_{L^1(\hat{G}(\tau_{n}),d\mu)}$. We have
\begin{align*}
\parallel e^{T\lambda_{\tau_{n}}^{U}} \parallel_{L^1(\hat{G}(\tau_{n})_{p},d\mu)}=\int_{\hat{G}(\tau_{n})}e^{T\lambda_{\tau_{n}}^{U}}d\mu(U)=\int_{\hat{G}(\tau_{n})_{p}\cap\{U_{\varepsilon,\nu}\}}+\int_{\hat{G}(\tau_{n})_{p}\cap\{U_{m}^{\pm}\}}.\, \, \mathrm{(4)}
\end{align*}
The first term in (4) is evaluated as
\begin{eqnarray}
\int_{\hat{G}(\tau_{n})_{p}\cap\{U_{\varepsilon,\nu}\}}&\leq&\int_{0}^{\infty}e^{T(-\frac{1}{4}n^2-\frac{1}{2}(\nu^2+\frac{1}{4}))}\frac{1}{2\pi}\nu\, \mathrm{max}\{\mathrm{tanh}\,\nu,\frac{1}{\mathrm{tanh}\,\nu}\}d\nu \nonumber \\
&\leq&\frac{e^{-\frac{1}{8}T-\frac{1}{4}Tn^2}}{\pi}\biggl(\int_{0}^{1}d\nu+\int_{1}^{\infty}e^{-\frac{1}{2}T\nu^2}\nu d\nu \biggr)\nonumber \\
&=&\frac{e^{-\frac{1}{8}T}}{\pi}\biggl(1+\frac{e^{-\frac{1}{2}T}}{T}\biggr)e^{-\frac{1}{4}Tn^2}. \nonumber
\end{eqnarray}
On the other hand, the second term in (4) is observed as
\begin{eqnarray}
\int_{\hat{G}(\tau_{n})_{p}\cap\{U_{m}^{\pm}\}}&=&\sum_{2\leq m\leq |n|, m\equiv n (\mathrm{mod} 2)}e^{T(-\frac{1}{4}n^2+\frac{m(m-2)}{8})}\frac{m-1}{4\pi} \nonumber \\
&\leq&e^{T(-\frac{1}{4}n^2+\frac{1}{8}n^2)}\frac{|n|}{4\pi}\sum_{2\leq m\leq |n|, m\equiv n (\mathrm{mod} 2)} \nonumber \\
&\leq&\frac{1}{8\pi}e^{-\frac{1}{8}Tn^2}n^2. \nonumber
\end{eqnarray}
Therefore the function $e^{T\lambda_{\tau_{n}}^{U}}$ is a dominant function.
\end{proof}
\begin{lem}\label{L2}
For $t>0$, we have $\rho_{t,n}\in L^2(G,dg)$.
\end{lem}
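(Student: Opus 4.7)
The strategy is to realize $\rho_{t,n}$ as the inverse spherical transform of the scalar function $\hat{F}(U):=e^{t\lambda_{\tau_n}^{U}}$ on $\hat{G}(\tau_n)$, and then to invoke the Plancherel theorem for the spherical transform (Theorem \ref{plancherel}). Since $\dim V_{\tau_n}=1$, the inversion formula of Section \ref{ST} reads
\[
F(g)=\int_{\hat{G}(\tau_n)}\Phi_{\tau_n}^{U}(g^{-1})\hat{F}(U)\,d\mu(U)=\int_{\hat{G}(\tau_n)}\tilde{\Phi}_{\tau_n}^{U}(g)\hat{F}(U)\,d\mu(U),
\]
so the choice $\hat{F}(U)=e^{t\lambda_{\tau_n}^{U}}$ formally produces $F=\rho_{t,n}$.

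The first concrete step is to verify $\hat{F}\in L^2(\hat{G}(\tau_n),d\mu)$, equivalently $e^{2t\lambda_{\tau_n}^{U}}\in L^1(\hat{G}(\tau_n),d\mu)$. This is precisely the bookkeeping carried out in the proof of Lemma \ref{each-conti} with $T$ replaced by $2t$: the continuous-series contribution is bounded above by a finite Gaussian integral in $\nu$, while the discrete-series contribution is a finite sum of at most $|n|$ terms. Hence $\|\hat{F}\|_{L^2(\hat{G}(\tau_n),d\mu)}<\infty$.

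By the unitarity in Theorem \ref{plancherel}, there exists a unique $F\in L^2(G,\tau_n,\tau_n)\subset L^2(G,dg)$ whose spherical transform equals $\hat{F}$. To identify $F$ with the pointwise function $\rho_{t,n}$, I would exhaust $\hat{G}(\tau_n)$ by an increasing sequence of compact subsets $\{E_j\}$ and set $F_j(g):=\int_{E_j}\tilde{\Phi}_{\tau_n}^{U}(g)\hat{F}(U)\,d\mu(U)$. On the spectral side $\mathbf{1}_{E_j}\hat{F}\to \hat{F}$ in $L^2(\hat{G}(\tau_n),d\mu)$, so $F_j\to F$ in $L^2(G,dg)$ by unitarity. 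On the group side, the bound $|\tilde{\Phi}_{\tau_n}^{U}(g)|\le 1$ combined with the same dominated-convergence argument as in Lemma \ref{each-conti} shows $F_j(g)\to\rho_{t,n}(g)$ pointwise. Extracting an a.e.\ convergent subsequence identifies $F=\rho_{t,n}$ almost everywhere, whence $\rho_{t,n}\in L^2(G,dg)$.

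The main obstacle is precisely this identification step, since \emph{a priori} the inverse spherical transform produced abstractly by Plancherel and the pointwise integral defining $\rho_{t,n}$ live in different realms. The exhaustion argument reconciles them, but one must check carefully that the pointwise inversion formula is applicable to each compactly supported $\mathbf{1}_{E_j}\hat{F}$ (which is immediate from $|\tilde{\Phi}_{\tau_n}^{U}|\le 1$) and that $\{F_j\}$ is the correct approximating sequence on both the group and spectral sides.
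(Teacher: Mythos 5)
Your argument is correct and follows essentially the same route as the paper: the paper also deduces $\|\rho_{t,n}\|_{L^2(G,dg)}=\|e^{t\lambda_{\tau_n}^{U}}\|_{L^2(\hat{G}(\tau_n),d\mu)}$ from the unitarity of the spherical transform (Theorem \ref{plancherel}) and then bounds this norm by the same estimates as in Lemma \ref{each-conti}. The only difference is that you spell out the identification of the pointwise integral $\rho_{t,n}$ with the abstract inverse spherical transform via an exhaustion argument, a step the paper leaves implicit.
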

\begin{proof}
By the Theorem \ref{plancherel}, we have
\begin{align*}
\parallel \rho_{t,n} \parallel_{L^2(G,dg)}=\parallel e^{t\lambda_{\tau_{n}}^{U}} \parallel_{L^2(\hat{G}(\tau_{n}),d\mu)}.
\end{align*}
By the same way as the proof of Lemma \ref{each-conti}, we obtain
\begin{align*}
\parallel e^{t\lambda_{\tau_{n}}^{U}} \parallel_{L^2(\hat{G}(\tau_{n}),d\mu)}^2\leq\frac{e^{-\frac{1}{4}t}}{\pi}\biggl(1+\frac{e^{-t}}{2t}\biggr)e^{-\frac{1}{2}tn^2}+\frac{1}{8\pi}e^{-\frac{1}{4}tn^2}n^2.
\end{align*}
\end{proof}
\begin{lem}
For $X\in\mathfrak{g}$, We have
\begin{eqnarray}
\frac{\partial \rho_{t,n}}{\partial t}(g)&=&\int_{\hat{G}(\tau_{n})}\lambda_{\tau_{n}}^{U}\tilde{\Phi}_{\tau_{n}}^{U}(g)e^{t\lambda_{\tau_{n}}^{U}}d\mu(U), \nonumber \\
\tilde{X}\rho_{t,n}(g)&=&\int_{\hat{G}(\tau_{n})}\tilde{X}\tilde{\Phi}_{\tau_{n}}^{U}(g)e^{t\lambda_{\tau_{n}}^{U}}d\mu(U), \nonumber \\
\tilde{X}\circ\tilde{X}\rho_{t,n}(g)&=&\int_{\hat{G}(\tau_{n})}\tilde{X}\circ\tilde{X}\tilde{\Phi}_{\tau_{n}}^{U}(g)e^{t\lambda_{\tau_{n}}^{U}}d\mu(U). \nonumber
\end{eqnarray}
\end{lem}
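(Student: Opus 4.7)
The plan is to apply the dominated convergence theorem (in its form for differentiation under the integral sign) three times, treating the three derivatives separately but by essentially the same method: bound the differentiated integrand by a $d\mu$-integrable function that is independent of $(t,g)$ on a neighbourhood of the point of interest, and then invoke Theorem \ref{plancherel}-style integrability estimates already exploited in Lemma \ref{each-conti}. In each case the formal computation of the pointwise derivative is immediate; the content of the lemma is the interchange of differentiation and integration.

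First I would handle $\partial/\partial t$. Fix $T>0$ and restrict to $t\in[T,\infty)$. The pointwise $t$-derivative of the integrand is $\lambda_{\tau_{n}}^{U}\tilde{\Phi}_{\tau_{n}}^{U}(g)e^{t\lambda_{\tau_{n}}^{U}}$, whose absolute value is at most $|\lambda_{\tau_{n}}^{U}|e^{T\lambda_{\tau_{n}}^{U}}$ since $\lambda_{\tau_{n}}^{U}<0$ and $|\tilde{\Phi}_{\tau_{n}}^{U}(g)|\leq 1$ by Cauchy--Schwarz. On the continuous series $|\lambda_{\tau_{n}}^{U_{\varepsilon,\nu}}|$ grows quadratically in $\nu$, but this polynomial growth is absorbed by the Gaussian factor $e^{-\frac{T}{2}\nu^{2}}$ coming from $e^{T\lambda_{\tau_{n}}^{U_{\varepsilon,\nu}}}$, exactly as in the computation in Lemma \ref{each-conti}; on the discrete series the sum is finite. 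Hence $|\lambda_{\tau_{n}}^{U}|e^{T\lambda_{\tau_{n}}^{U}}$ is in $L^{1}(\hat{G}(\tau_{n}),d\mu)$, and the first identity follows.

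Next I would handle $\tilde{X}$. From $\tilde{\Phi}_{\tau_{n}}^{U}(g)=\overline{\langle U(g)e_{\tau_{n}}^{U},e_{\tau_{n}}^{U}\rangle}_{H_{U}}$ and the computation at the start of \S\ref{SFonSL2R}, we have $\tilde{X}\tilde{\Phi}_{\tau_{n}}^{U}(g)=\langle e_{\tau_{n}}^{U},U(g)dU(X)e_{\tau_{n}}^{U}\rangle_{H_{U}}$, whose absolute value is at most $\|dU(X)e_{\tau_{n}}^{U}\|_{H_{U}}$. Theorem \ref{derivative} shows that $dU(X)e_{\tau_{n}}^{U}$ is a sum of two orthogonal weight vectors with coefficients bounded linearly in $|\nu|$ (continuous series) or in $m$ (discrete series). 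Thus $\|dU(X)e_{\tau_{n}}^{U}\|_{H_{U}}\leq C_{n}(1+|\nu|)$ on $\{U_{\varepsilon,\nu}\}$ and $\leq C_{n}m$ on $\{U_{m}^{\pm}\}$. Multiplied by $e^{t\lambda_{\tau_{n}}^{U}}$ this is again dominated, uniformly for $t\geq T$ and $g\in G$, by a $d\mu$-integrable function by the same Gaussian-vs-polynomial estimate used above, giving the second identity.

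For $\tilde{X}\circ\tilde{X}$ the argument is formally identical: the integrand becomes $\langle e_{\tau_{n}}^{U},U(g)dU(X)^{2}e_{\tau_{n}}^{U}\rangle_{H_{U}}$, bounded by $\|dU(X)^{2}e_{\tau_{n}}^{U}\|_{H_{U}}$, and applying Theorem \ref{derivative} twice shows this grows at most quadratically in $|\nu|$ (resp.\ in $m$), which is still dominated by $e^{T\lambda_{\tau_{n}}^{U}}$ up to a polynomial factor that is integrable against $d\mu$. The main obstacle I expect to spend the most care on is precisely this last step: keeping track of the coefficients from Theorem \ref{derivative} when composing $dU(X)$ with itself, since one has to control contributions from the $n\pm 2$ and $n\pm 4$ weight spaces simultaneously. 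Once this is in hand, the three formulas follow from the classical theorem on differentiation of parameter integrals.
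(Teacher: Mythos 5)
Your proposal is correct and follows essentially the same route as the paper: differentiation under the integral sign justified by a dominating function of the form (polynomial in $\nu$ or $m$)$\times e^{T\lambda_{\tau_{n}}^{U}}$, using $|\tilde{\Phi}_{\tau_{n}}^{U}(g)|\leq 1$, Theorem \ref{derivative} for the coefficients, and the integrability estimate from Lemma \ref{each-conti}. The only difference is that the paper works out just the representative case $\tilde{Y}_{1}$ and leaves the $t$-derivative and second-order derivatives to the same method, whereas you spell out all three cases, including the quadratic growth for $dU(X)^{2}e_{\tau_{n}}^{U}$.
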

\begin{proof}
We see that $\rho_{t,n}(g)$ is smooth in $t>0$ and $g\in G$ by using interchanging differentiation and integration. To see the situation, we prove that $\rho_{t,n}$ can be differentiated by $\tilde{Y}_{1}$. For each $T>0$, we find a dominant function of $\tilde{Y}_{1}\tilde{\Phi}_{\tau_{n}}^{U}(g)e^{t\lambda_{\tau_{n}}^{U}}$ on $[T,\infty)\times G$. By Theorem \ref{derivative}, we compute
\begin{eqnarray}
\tilde{Y}_{1}\tilde{\Phi}_{\tau_{n}}^{U_{\varepsilon, \nu}}(g)=\frac{1}{2\sqrt{8}}\biggl(&(-i2\nu +1-n)\langle e_{\tau_{n}}^{U_{\varepsilon, \nu}},U_{\varepsilon,\nu}(g)e_{\tau_{n-2}}^{U_{\varepsilon, \nu}}\rangle_{H_{U_{\varepsilon,\nu}}}&\nonumber \\
&+ (-i2\nu +1+n)\langle e_{\tau_{n}}^{U_{\varepsilon, \nu}},U_{\varepsilon,\nu}(g)e_{\tau_{n+2}}^{U_{\varepsilon, \nu}}\rangle_{H_{U_{\varepsilon,\nu}}}& \biggr) \nonumber
\end{eqnarray}
and
\begin{eqnarray}
\tilde{Y}_{1}\tilde{\Phi}_{\tau_{n}}^{U_{m}^{\pm}}(g)=\frac{1}{2\sqrt{8}}\biggl( &(m-n)\langle e_{\tau_{n}}^{U_{m}^{\pm}},U_{m}^{\pm}(g)e_{\tau_{n-2}}^{U_{m}^{\pm}}\rangle_{H_{U_{m}^{\pm}}}&\nonumber \\
&+ (m+n)\langle e_{\tau_{n}}^{U_{m}^{\pm}},U_{m}^{\pm}(g)e_{\tau_{n+2}}^{U_{m}^{\pm}}\rangle_{H_{U_{m}^{\pm}}}& \biggr). \nonumber
\end{eqnarray}
By Cauchy-Schwartz's inequality, we have
\begin{align*}
|\langle e_{\tau_{n}}^{U},U(g)e_{\tau_{n-2}}^{U}\rangle_{H_{U}}|\leq 1\, \,  \mbox{and}\,\, |\langle e_{\tau_{n}}^{U},U(g)e_{\tau_{n+2}}^{U}\rangle_{H_{U}}|\leq 1.
\end{align*}
Let $\varphi_{n}$ be a function on $\hat{G}(\tau_{n})$ defined by
\begin{eqnarray}
\varphi_{n}(U)=
\begin{cases}
|-i2\nu +1-n|+ |-i2\nu +1+n| & (U=U_{\varepsilon,\nu}) \\
|m-n|+|m+n| & (U=U_{m}^{\pm}).
\end{cases} \nonumber
\end{eqnarray}
Then, we can show that $\varphi_{n}(U)e^{T\lambda_{\tau_{n}}^{U}}$ is a dominant function of $\tilde{Y}_{1}\tilde{\Phi}_{\tau_{n}}^{U}(g)e^{t\lambda_{\tau_{n}}^{U}}$ by using a similarly method in the proof of Lemma \ref{each-conti}. Thus, $\rho_{t,n}$ can be differentiated by $\tilde{Y}_{1}$ and
\begin{align*}
\tilde{Y}_{1}\rho_{t,n}(g)=\int_{\hat{G}(\tau_{n})}\tilde{Y}_{1}\tilde{\Phi}_{\tau_{n}}^{U}(g)e^{t\lambda_{\tau_{n}}^{U}}d\mu(U).
\end{align*}
\end{proof}
\begin{lem}\label{conv-each-conti}
The function $\rho_{t,n}*f$ is continuous on $(0,\infty)\times G$.
\end{lem}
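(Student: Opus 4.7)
The plan is to apply the dominated convergence theorem directly to the integral defining the convolution. Fix an arbitrary point $(t_{0},g_{0})\in(0,\infty)\times G$ and choose a compact neighborhood $[T_{1},T_{2}]\times W$ with $0<T_{1}<t_{0}<T_{2}$ and $g_{0}\in W$. Since $f\in C_{0}^{\infty}(G)\cap L^{2}(G,\tau_{n})$ has compact support $K_{f}$, the integrand in
\begin{align*}
(\rho_{t,n}*f)(g)=\int_{G}\rho_{t,n}(g'^{-1}g)f(g')\,dg'
\end{align*}
is supported in $g'\in K_{f}$.

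To find a dominating function independent of $(t,g)\in[T_{1},T_{2}]\times W$, I would recycle the bound produced in the proof of Lemma \ref{each-conti}. Since $|\tilde\Phi_{\tau_{n}}^{U}(h)|\leq 1$ (Cauchy--Schwarz) and $\lambda_{\tau_{n}}^{U}<0$, we have $|\tilde\Phi_{\tau_{n}}^{U}(h)\,e^{t\lambda_{\tau_{n}}^{U}}|\leq e^{T_{1}\lambda_{\tau_{n}}^{U}}$ for all $h\in G$ and $t\geq T_{1}$, and the estimate in the proof of Lemma \ref{each-conti} shows that $e^{T_{1}\lambda_{\tau_{n}}^{U}}\in L^{1}(\hat G(\tau_{n}),d\mu)$. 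Consequently there is a constant $C=C(T_{1},n)$ with $|\rho_{t,n}(h)|\leq C$ uniformly in $h\in G$ and $t\in[T_{1},T_{2}]$. Thus $|\rho_{t,n}(g'^{-1}g)f(g')|\leq C\|f\|_{\infty}\,\mathbf{1}_{K_{f}}(g')$, which is an $L^{1}(G,dg')$ majorant independent of $(t,g)\in[T_{1},T_{2}]\times W$.

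For continuity of the integrand in $(t,g)$ at fixed $g'$, I would combine Lemma \ref{each-conti} (continuity of $\rho_{t,n}$ on $(0,\infty)\times G$) with continuity of the group operation $(g,g')\mapsto g'^{-1}g$ and of $f$. The dominated convergence theorem then gives that $(t,g)\mapsto(\rho_{t,n}*f)(t,g)$ is continuous at $(t_{0},g_{0})$, and since $(t_{0},g_{0})$ was arbitrary, continuity holds on all of $(0,\infty)\times G$.

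I expect no substantial obstacle: the only delicate point is extracting the uniform bound on $|\rho_{t,n}|$ valid for $t$ in a compact subinterval of $(0,\infty)$, which is already essentially contained in the $L^{1}$-majorant argument of Lemma \ref{each-conti}. Compactness of $\mathrm{supp}(f)$ together with this uniform bound is precisely what is needed to invoke dominated convergence for joint continuity in $(t,g)$.
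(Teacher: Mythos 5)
Your proof is correct, and it reaches the conclusion by a slightly different organization of the same underlying estimate. The paper treats $\rho_{t,n}*f(g)$ as a single double integral over $(h,U)\in G\times\hat G(\tau_{n})$ of $\tilde\Phi_{\tau_{n}}^{U}(h^{-1}g)e^{t\lambda_{\tau_{n}}^{U}}f(h)$, dominates it by the separated majorant $e^{T\lambda_{\tau_{n}}^{U}}|f(h)|$, whose integrability is $\parallel e^{T\lambda_{\tau_{n}}^{U}}\parallel_{L^{1}(\hat G(\tau_{n}),d\mu)}\parallel f\parallel_{L^{1}(G,dh)}$ (the first factor being the bound from Lemma \ref{each-conti}), and then concludes joint continuity in one stroke; this version needs only $f\in L^{1}$ and does not use the conclusion of Lemma \ref{each-conti}, only its estimate. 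You instead factor the argument: you first extract from that same estimate the uniform bound $|\rho_{t,n}(h)|\le C(T_{1},n)$ for $t\ge T_{1}$, then apply dominated convergence to the single integral over $G$, using the compact support and boundedness of $f$ to get the majorant $C\|f\|_{\infty}\mathbf{1}_{K_{f}}$ and the already-proved continuity of $\rho_{t,n}$ (Lemma \ref{each-conti}) for pointwise continuity of the integrand. Both routes are sound; yours is more modular in reusing the previous lemma's conclusion, while the paper's double-integral domination is marginally more self-contained and extends verbatim to any $f\in L^{1}(G,dg)$ rather than only to bounded compactly supported $f$ (which is all that is needed here, since $f\in C_{0}^{\infty}(G)\cap L^{2}(G,\tau_{n})$).
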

\begin{proof}
For $T>0$, we prove that the function $e^{T\lambda_{\tau_{n}}^{U}}|f(h)|$ is a dominant function of the function $\tilde{\Phi}_{\tau_{n}}^{U}(h^{-1}g)e^{t\lambda_{\tau_{n}}^{U}}f(h)$ with respect to $(t,g)\in [T,\infty)\times G$, where the integration domain is $(h,U)\in G\times \hat{G}(\tau_{n})$. Since $|\tilde{\Phi}_{\tau_{n}}^{U}(h^{-1}g)|\leq 1$, $|\tilde{\Phi}_{\tau_{n}}^{U}(h^{-1}g)e^{t\lambda_{\tau_{n}}^{U}}f(h)|\leq e^{T\lambda_{\tau_{n}}^{U}}|f(h)|$. We calculate an upper bound of $\parallel e^{T\lambda_{\tau_{n}}^{U}}|f(h)|\parallel_{L^1(G\times\hat{G}(\tau_{n}),dhd\mu)}$. Since variables are separated, we have
\begin{align*}
\parallel e^{T\lambda_{\tau_{n}}^{U}}|f(h)|\parallel_{L^1(G\times\hat{G}(\tau_{n}),dhd\mu)}=\parallel e^{T\lambda_{\tau_{n}}^{U}}\parallel_{L^1(\hat{G},d\mu)}\parallel f(h)\parallel_{L^1(G,dh)}.
\end{align*}
An upper bound of $\parallel e^{T\lambda_{\tau_{n}}^{U}}\parallel_{L^1(\hat{G},d\mu)}$ is calculated in the proof of Lemma \ref{each-conti}.
\end{proof}
\begin{lem}\label{conv-L2}
We have $\rho_{t,n}*f\in L^2(G,dg)$.
\end{lem}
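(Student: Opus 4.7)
The plan is to apply Minkowski's integral inequality (the convolution version of Young's inequality) directly to the defining formula
\[
(\rho_{t,n} * f)(g) = \int_{G} \rho_{t,n}(h^{-1}g)\, f(h)\, dh.
\]
Taking the $L^{2}(G,dg)$-norm in the outer variable $g$ and pulling it inside the integral over $h$ via Minkowski's inequality gives
\[
\|\rho_{t,n} * f\|_{L^{2}(G,dg)} \leq \int_{G} \bigl\|g \mapsto \rho_{t,n}(h^{-1}g)\bigr\|_{L^{2}(G,dg)}\, |f(h)|\, dh.
\]
Since $dg$ is left $G$-invariant, the norm of the left translate $g \mapsto \rho_{t,n}(h^{-1}g)$ equals $\|\rho_{t,n}\|_{L^{2}(G,dg)}$, which is independent of $h$. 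This reduces the estimate to
\[
\|\rho_{t,n} * f\|_{L^{2}(G,dg)} \leq \|\rho_{t,n}\|_{L^{2}(G,dg)} \cdot \|f\|_{L^{1}(G,dg)}.
\]

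The first factor on the right is finite by Lemma \ref{L2}, and the second factor is finite because any $f \in C_{0}^{\infty}(G)$ is bounded with compact support and hence integrable. Therefore $\rho_{t,n} * f \in L^{2}(G,dg)$. This argument is essentially the standard statement that $L^{1}$ acts on $L^{2}$ by convolution on a unimodular group (and $G=SL(2,\mathbb{R})$ is unimodular as it is semisimple), so no real obstacle is expected; the only nontrivial ingredient, namely $\rho_{t,n} \in L^{2}(G,dg)$, has already been established in Lemma \ref{L2}.
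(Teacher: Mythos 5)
Your argument is correct, but it proceeds along a genuinely different route from the paper. You use the elementary convolution estimate: by Minkowski's integral inequality and the left invariance of $dg$ (so that each left translate $g\mapsto\rho_{t,n}(h^{-1}g)$ has the same $L^2$-norm as $\rho_{t,n}$), you get $\|\rho_{t,n}*f\|_{L^2(G,dg)}\leq\|\rho_{t,n}\|_{L^2(G,dg)}\,\|f\|_{L^1(G,dg)}$, and both factors are finite by Lemma \ref{L2} and the compact support of $f$; note that unimodularity is not even needed for this direction of Young's inequality, only left invariance. The paper instead works entirely on the Fourier side: it combines the Plancherel formula for the Helgason--Fourier transform (Theorem \ref{Plancherel_formula}) with the convolution-to-multiplication formula (Theorem \ref{conv_mult}) to write $\|\rho_{t,n}*f\|_{L^2(G,dg)}^2$ as an explicit integral involving $e^{2t\lambda_{\tau_n}^{U}}\,\|\tilde f(\nu,k)\|^2$, which converges because $\lambda_{\tau_n}^{U}<0$. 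Your approach is shorter and more elementary, requiring only Lemma \ref{L2}; the paper's approach yields an exact expression for the norm on the transform side, and that same computation is reused almost verbatim in the subsequent lemma proving $\|\rho_{t,n}*f-f\|_{L^2(G,dg)}\to 0$ as $t\to+0$, so the extra machinery is not wasted there. Either proof establishes the lemma.
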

\begin{proof}
By Theorem \ref{Plancherel_formula} and Theorem \ref{conv_mult},
\begin{flushleft}
$\parallel \rho_{t,n}*f \parallel_{L^2(G,dg)}^2$\\
$\displaystyle =c_{Q}\int_{\mathfrak{a}^{*}\times K}e^{2t\lambda_{\tau_{n}}^{U_{\varepsilon,\nu}}}\parallel \tilde{f}(\nu,k)\parallel^2_{V_{\tau_{n}}}p_{\sigma_{\varepsilon}}(\nu)d\nu dk$\\
$\displaystyle +c_{G}\sum_{\sigma'_{m}}e^{2t\lambda_{\tau_{n}}^{U_{m}^{\pm}}}\int_{K}\langle \tilde{f}(i\mu_{1},k),\tilde{f}(-i\mu_{1},k)\rangle_{V_{\tau_{n}}}p_{\sigma'_{m}}dk$.
\end{flushleft}
The right side hand converges since $\lambda_{\tau_{n}}^{U}<0$.
\end{proof}
\begin{lem}
For $X\in\mathfrak{g}$, we have
\begin{eqnarray}
\frac{\partial (\rho_{t,n}*f)}{\partial t}(g)&=&\biggl(\frac{\partial \rho_{t,n}}{\partial t}\biggr)*f(g), \nonumber \\
\tilde{X}(\rho_{t,n}*f)(g)&=&(\tilde{X}\rho_{t,n})*f(g) \nonumber \\
\mathrm{and\, \, }\tilde{X}\circ\tilde{X}(\rho_{t,n}*f)(g)&=&(\tilde{X}\circ\tilde{X}\rho_{t,n})*f(g). \nonumber
\end{eqnarray}
\end{lem}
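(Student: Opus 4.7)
The plan is to prove all three identities by a single template argument: differentiation under the integral sign, exploiting the compact support of $f$. Writing out the convolution,
\begin{align*}
\rho_{t,n}*f(g)=\int_{G}\rho_{t,n}(h^{-1}g)f(h)\,dh=\int_{\mathrm{supp}(f)}\rho_{t,n}(h^{-1}g)f(h)\,dh,
\end{align*}
so the effective integration domain in $h$ is compact. The key algebraic input is that $\tilde{X}$ is left invariant, hence commutes with $L_{h^{-1}}$; consequently, for each fixed $h$,
\begin{align*}
\tilde{X}\bigl[g\mapsto\rho_{t,n}(h^{-1}g)\bigr](g)=(\tilde{X}\rho_{t,n})(h^{-1}g),
\end{align*}
and similarly for $\tilde{X}\circ\tilde{X}$ and for $\partial/\partial t$ (trivially, since $t$ and $h$ are independent variables).

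Next I would verify the hypotheses of the classical differentiation-under-the-integral theorem at an arbitrary base point $(t_{0},g_{0})\in(0,\infty)\times G$. Fix a compact interval $[T,T']\ni t_{0}$ with $T>0$, a compact neighborhood $V\ni g_{0}$, and for the $\tilde{X}$-derivative a small $\varepsilon>0$ so that $g_{0}\exp(sX)\in V$ for $|s|\le\varepsilon$. The set
\begin{align*}
C=\{h^{-1}g':h\in\mathrm{supp}(f),\,g'\in V\}\subset G
\end{align*}
is compact. By the preceding lemma, the functions $\partial_{t}\rho_{t,n}$, $\tilde{X}\rho_{t,n}$, and $\tilde{X}\circ\tilde{X}\rho_{t,n}$ admit the stated spectral integral representations; a dominated-convergence argument identical in structure to the proof of Lemma \ref{each-conti} (using that $|\tilde{\Phi}_{\tau_{n}}^{U}|\le 1$, $|\tilde{X}\tilde{\Phi}_{\tau_{n}}^{U}|$ is controlled by $\varphi_{n}(U)$, and the Plancherel measure estimates) shows that each of these functions is continuous on $(0,\infty)\times G$. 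Restricted to $[T,T']\times C$ they are therefore bounded by some constant $M$. The dominant function $M|f(h)|\mathbf{1}_{\mathrm{supp}(f)}(h)$ is integrable, which justifies all three interchanges.

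Concretely, for the $\tilde{X}$-identity I would write $\tilde{X}(\rho_{t,n}*f)(g_{0})=\frac{d}{ds}\big|_{s=0}\int_{\mathrm{supp}(f)}\rho_{t,n}(h^{-1}g_{0}\exp(sX))f(h)\,dh$, bring $d/ds$ inside using the dominant function above applied to $\tilde{X}\rho_{t,n}$, and then invoke left invariance to rewrite the resulting integrand as $(\tilde{X}\rho_{t,n})(h^{-1}g_{0})f(h)$, which is exactly $(\tilde{X}\rho_{t,n})*f(g_{0})$. The case $\partial/\partial t$ is identical but easier, using the bound on $\partial_{t}\rho_{t,n}$. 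For the second-order case $\tilde{X}\circ\tilde{X}$ one applies the argument twice, noting that $\tilde{X}\rho_{t,n}$ plays in the second step the role that $\rho_{t,n}$ played in the first, and that continuity of $\tilde{X}\circ\tilde{X}\rho_{t,n}$ on $(0,\infty)\times G$ provides the required dominant bound on the compact set $[T,T']\times C$.

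I do not anticipate a serious obstacle: the construction of dominant functions has already been carried out in the earlier lemmas of this subsection, and the only new ingredients are (a) the observation that $\mathrm{supp}(f)$ is compact, hence irrelevant growth issues disappear, and (b) left invariance of $\tilde{X}$, which is immediate. The most delicate bookkeeping point is ensuring continuity (not just pointwise existence) of $\tilde{X}\rho_{t,n}$ and $\tilde{X}\circ\tilde{X}\rho_{t,n}$ jointly in $(t,g)$; this is handled by repeating the dominated-convergence template of Lemma \ref{each-conti} with the appropriate weight $\varphi_{n}(U)$ or $\varphi_{n}(U)^{2}$, both of which are controlled against $e^{T\lambda_{\tau_{n}}^{U}}$ after shrinking $T$ slightly.
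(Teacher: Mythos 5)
Your argument is correct, but it takes a slightly different route from the paper's. The paper disposes of this lemma by mimicking the proof of Lemma \ref{conv-each-conti}: it regards $\rho_{t,n}*f(g)$ as a double integral over $(h,U)\in G\times\hat{G}(\tau_{n})$ of $\tilde{\Phi}_{\tau_{n}}^{U}(h^{-1}g)e^{t\lambda_{\tau_{n}}^{U}}f(h)$ and differentiates under that joint integral, the dominant function being the separated product of the weight already used for $\rho_{t,n}$ itself (namely $e^{T\lambda_{\tau_{n}}^{U}}$, respectively $\varphi_{n}(U)e^{T\lambda_{\tau_{n}}^{U}}$ for the $\tilde{X}$-derivatives) with $|f(h)|$, so that integrability follows from separation of variables exactly as in Lemma \ref{conv-each-conti}. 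You instead differentiate only under the $h$-integral: you use compactness of $\mathrm{supp}(f)$, left invariance of $\tilde{X}$, and joint continuity of $\partial_{t}\rho_{t,n}$, $\tilde{X}\rho_{t,n}$, $\tilde{X}\circ\tilde{X}\rho_{t,n}$ to obtain a constant bound $M$ on a compact set and take $M|f(h)|\mathbf{1}_{\mathrm{supp}(f)}$ as the dominant. This is valid and arguably more modular: it consumes only the output of the preceding lemma (plus the continuity of those derivatives, which, as you note, follows from the same dominated-convergence template as Lemma \ref{each-conti} since the weights are polynomial in $\nu$ against Gaussian decay, and only finitely many discrete series contribute). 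What the paper's route buys is that it never needs the joint continuity of the derivatives of $\rho_{t,n}$ as a separate input, since the spectral dominant controls everything at once; what your route buys is independence from the explicit spectral estimates at this stage and a cleaner reduction to a standard local differentiation-under-the-integral argument. Your only loose point is cosmetic: the weight for the second-order derivative is not literally $\varphi_{n}(U)^{2}$ but a comparable quadratic expression coming from coefficients at the shifted weights $n\pm 2$, $n\pm 4$; this does not affect integrability, and no shrinking of $T$ is actually required.
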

The proof of this lemma is almost the same as the proof of Lemma \ref{conv-each-conti}. Then, $\mathrm{(i)}$ is proved. Finally, we show $\mathrm{(ii)}$.
\begin{lem}
$\parallel \rho_{t,n}*f-f\parallel_{L^2(G,dg)}\rightarrow 0\, \, (t\rightarrow +0)$.
\end{lem}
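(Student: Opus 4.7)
The plan is to reduce the norm $\|\rho_{t,n}*f-f\|_{L^2(G,dg)}^2$ to an integral/sum of nonnegative terms of the form $|e^{t\lambda_{\tau_{n}}^{U}}-1|^2$ weighted by the Helgason--Fourier data of $f$, and then invoke the dominated convergence theorem.

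First, I would compute the spherical transform of $\rho_{t,n}$. By Theorem~\ref{plancherel}, the inverse spherical transform of $e^{t\lambda_{\tau_{n}}^{U}}$ is exactly $\rho_{t,n}$ (with the trivial $\mathrm{End}(V_{\tau_{n}})$-structure since $\dim V_{\tau_n}=1$ for $G=SL(2,\mathbb{R})$). Equivalently, since $\Phi_{\tau_{n}}^{U}$ are joint eigenfunctions of $\Delta_G$ with eigenvalue $\lambda_{\tau_{n}}^{U}$ by the computation in Section~\ref{SFonSL2R}, formal evaluation gives $\widehat{\rho_{t,n}}(U)=e^{t\lambda_{\tau_{n}}^{U}}$. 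Then Theorem~\ref{conv_mult} yields, for each $\nu \in \mathfrak{a}^{*}$ and $k \in K$,
\begin{align*}
\widetilde{\rho_{t,n}*f}(\nu,k) = e^{t\lambda_{\tau_{n}}^{U_{\varepsilon,\nu}}}\,\tilde{f}(\nu,k),
\end{align*}
with an analogous identity $T_{\tilde{\sigma}'_{m}}\widetilde{\rho_{t,n}*f}(\pm i\mu_{1},k) = e^{t\lambda_{\tau_{n}}^{U_{m}^{\pm}}}\,T_{\tilde{\sigma}'_{m}}\tilde{f}(\pm i\mu_{1},k)$ for the discrete-series piece via the formula after Theorem~\ref{conv_mult}.

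Next, I would apply the Plancherel formula for $SL(2,\mathbb{R})$ (the rewrite of Theorem~\ref{Plancherel_formula} from the previous subsection) to $\rho_{t,n}*f-f$:
\begin{align*}
\|\rho_{t,n}*f-f\|_{L^2(G,dg)}^{2}
&= c_{Q}\int_{\mathfrak{a}^{*}\times K}\bigl|e^{t\lambda_{\tau_{n}}^{U_{\varepsilon,\nu}}}-1\bigr|^{2}\,\|\tilde{f}(\nu,k)\|_{V_{\tau_{n}}}^{2}\,p_{\sigma_{\varepsilon}}(\nu)\,d\nu\,dk \\
&\quad + c_{G}\sum_{\sigma'_{m}}\bigl|e^{t\lambda_{\tau_{n}}^{U_{m}^{\pm}}}-1\bigr|^{2}\!\int_{K}\!\langle \tilde{f}(i\mu_{1},k),\tilde{f}(-i\mu_{1},k)\rangle_{V_{\tau_{n}}}\,p_{\sigma'_{m}}\,dk.
\end{align*}
The integrands converge pointwise to $0$ as $t\to+0$ since $\lambda_{\tau_{n}}^{U}<0$. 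For a dominant function I use $|e^{t\lambda_{\tau_{n}}^{U}}-1|^{2}\le 4$, so the integrands are bounded by $4$ times the corresponding Plancherel integrands for $\|f\|_{L^{2}}^{2}$, which is finite because $f\in C_{0}^{\infty}(G)\cap L^{2}(G,\tau_{n})$.

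The main technical point to justify is the identity $\widetilde{\rho_{t,n}*f}(\nu,k)=e^{t\lambda_{\tau_{n}}^{U_{\varepsilon,\nu}}}\tilde{f}(\nu,k)$: strictly speaking, Theorem~\ref{conv_mult} is stated for $\Psi\in C_{0}^{\infty}(G,\tau,\tau)$, whereas $\rho_{t,n}$ is defined as an inverse Plancherel integral and is only known a priori to lie in $L^{2}$ (Lemma~\ref{L2}). The hard part will therefore be to approximate $\rho_{t,n}$ by a sequence $\Psi_{j}\in C_{0}^{\infty}(G,\tau_{n},\tau_{n})$ (for example by cut-offs of the spectral variable $U$) so that Theorem~\ref{conv_mult} applies to $\Psi_{j}*f$, and then to pass to the limit using Theorem~\ref{plancherel} together with the $L^{2}$-continuity of $f\mapsto f*g$ on the $\tau_{n}$-isotypic subspace. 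Once that identification is established, dominated convergence gives condition (ii) and completes the proof that $\rho_{t,n}=P_{\tau_{n}}\rho_{G}$.
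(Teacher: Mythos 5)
Your proposal follows essentially the same route as the paper: apply the Plancherel formula for $SL(2,\mathbb{R})$ (Theorem \ref{Plancherel_formula}) together with the convolution identity of Theorem \ref{conv_mult} to write $\|\rho_{t,n}*f-f\|_{L^2(G,dg)}^2$ as integrals of $\|\tilde f\|^2$ weighted by $(1-e^{t\lambda_{\tau_n}^U})^2$, and then pass to the limit $t\to+0$ (the paper invokes monotone convergence for the continuous-series term where you use dominated convergence with the bound $4$; this is immaterial). Your closing observation that Theorem \ref{conv_mult} is stated only for $\Psi\in C_0^{\infty}(G,\tau,\tau)$, while $\rho_{t,n}$ is not compactly supported, flags a technical point that the paper itself passes over without comment, so if anything your write-up is the more careful of the two.
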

\begin{proof}
By Theorem \ref{Plancherel_formula} and Theorem \ref{conv_mult},
\begin{flushleft}
$\parallel \rho_{t,n}*f - f\parallel_{L^2(G,dg)}^2$\\
$\displaystyle =c_{Q}\int_{\mathfrak{a}^{*}\times K}(1-e^{t\lambda_{\tau_{n}}^{U_{\varepsilon,\nu}}})^2\parallel \tilde{f}(\nu,k)\parallel^2_{V_{\tau_{n}}}p_{\sigma_{\varepsilon}}(\nu)d\nu dk$\\
$\displaystyle +c_{G}\sum_{\sigma'_{m}}(1-e^{t\lambda_{\tau_{n}}^{U_{m}^{\pm}}})^2\int_{K}(1-e^{t\lambda_{\tau_{n}}^{U_{m}^{\pm}}})^2\langle \tilde{f}_{1}(i\mu_{1},k),\tilde{f}_{2}(-i\mu_{1},k)\rangle_{V_{\tau_{n}}}p_{\sigma'_{m}}dk$.
\end{flushleft}
The first term converges to $0$ by the monotone convergence theorem. Other terms converge to $0$ clearly.
\end{proof}
Then, $\mathrm{(ii)}$ is proved. Thus, we get the main result.
\begin{thm}\label{MyThm1}
Let $G=SL(2,\mathbb{R})$. The function
\begin{align*}
\rho(t,g)=\sum_{n\in\mathbb{Z}}\int_{\hat{G}(\tau_{n})}\tilde{\Phi}_{\tau_{n}}^{U}(g)e^{t\lambda_{\tau_{n}}^{U}}d\mu(U),\, \, (t,g)\in(0,\infty)\times G
\end{align*}
is the heat kernel on $G$.
\end{thm}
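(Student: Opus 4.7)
The plan is to verify conditions (i) and (ii) of Definition \ref{riemannheat} for $\rho(t,g)$ and then invoke the uniqueness corollary from Section 3.2. Since $L^2(G,dg)$ decomposes as the orthogonal Hilbert direct sum $\sum_{n\in\mathbb{Z}} L^2(G,\tau_n)$ and each subspace is $\Delta_G$-invariant, my strategy is to work one $n$ at a time: I would identify $\rho_{t,n}(g) = \int_{\hat{G}(\tau_n)}\tilde{\Phi}_{\tau_n}^U(g)e^{t\lambda_{\tau_n}^U}d\mu(U)$ with $P_{\tau_n}\rho_G$ and then sum over $n$.

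First I would establish that $\rho_{t,n}$ is smooth on $(0,\infty)\times G$ and that all desired interchanges of integration with $\partial_t$ and with left-invariant vector fields are legitimate. The uniform estimate $|\tilde{\Phi}_{\tau_n}^U(g)|\leq 1$ (Cauchy--Schwarz on the matrix coefficient), combined with the explicit Plancherel density (3) and the bound $\lambda_{\tau_n}^U \leq -\tfrac{1}{4}n^2 + \chi_U(C_G)$, produces integrable dominating functions of the form $e^{T\lambda_{\tau_n}^U}$ on $[T,\infty)\times G$; the corresponding dominating functions for first- and second-order derivatives are supplied by Theorem \ref{derivative}, which introduces only polynomial-in-$\nu$ or polynomial-in-$m$ factors that are absorbed by the Gaussian-type decay. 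The eigenfunction identity $\Delta_G\tilde{\Phi}_{\tau_n}^U = \lambda_{\tau_n}^U\tilde{\Phi}_{\tau_n}^U$ (derived from Theorems \ref{character} and \ref{derivative}) then yields $\partial_t\rho_{t,n} = \Delta_G\rho_{t,n}$ pointwise, and commuting $\partial_t$ and $\Delta_G$ past the convolution gives condition (i) for $\rho_{t,n}*f$ when $f\in C_0^\infty(G)\cap L^2(G,\tau_n)$. For the initial condition (ii), I would apply the Plancherel formula for $SL(2,\mathbb{R})$ from Section 7.5 together with Theorem \ref{conv_mult} to rewrite $\|\rho_{t,n}*f - f\|_{L^2}^2$ as a single integral involving the factor $(1 - e^{t\lambda_{\tau_n}^U})^2\|\tilde{f}(\nu,k)\|^2$ plus a finite discrete-series sum; monotone convergence on the principal-series piece and continuity on the discrete piece give the desired vanishing as $t\to +0$.

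At this stage, the uniqueness of the heat kernel on $G$ (applied within the invariant subspace $L^2(G,\tau_n)$) identifies $\rho_{t,n}$ with $P_{\tau_n}\rho_G$; summing over $n\in\mathbb{Z}$ then identifies $\rho(t,g) = \sum_n \rho_{t,n}(g)$ with $\rho_G(t,g)$. The main obstacle I anticipate is the quantitative control needed to push everything past the two independent limits appearing in this picture: the interchange of differentiation with the Plancherel integral over $\hat{G}(\tau_n)$, and the convergence of the external sum $\sum_n$. The former requires balancing the polynomial growth of $p_{\sigma_\varepsilon}(\nu)$ against the Gaussian decay $e^{-\frac{t}{2}\nu^2}$; the latter requires that the $n$-dependent prefactor $e^{-tn^2/4}$ dominates the cubic-in-$n$ worst-case behavior of the discrete-series contributions in $\hat{G}(\tau_n)_p$ coming from Corollary \ref{intedom}. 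All of these estimates fit naturally into the scheme of Lemmas of the type \ref{each-conti}, \ref{L2}, and \ref{conv-each-conti}, but the bookkeeping in assembling them into a single uniform statement is where the real technical effort lies.
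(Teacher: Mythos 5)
Your proposal follows essentially the same route as the paper: decompose $L^2(G,dg)$ into the $K$-types $L^2(G,\tau_n)$, verify condition (i) for each $\rho_{t,n}$ by differentiating under the Plancherel integral with dominating functions built from $|\tilde{\Phi}_{\tau_n}^U(g)|\leq 1$, the explicit densities, and the bounds of Theorem \ref{derivative} together with the eigenvalue identity $(2)$, verify condition (ii) via the Plancherel formula of Section 7.5 and Theorem \ref{conv_mult}, and then identify $\rho_{t,n}$ with $P_{\tau_n}\rho_G$ by uniqueness of the heat kernel before summing over $n$. This matches the sequence of Lemmas \ref{each-conti} through \ref{conv-L2} and the concluding argument in the paper, so the approach is correct and not materially different.
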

In addition, we can describe $\rho(t,g)$ explicitly by Corollary \ref{intedom}, Theorem \ref{hypergeo} (or Theorem \ref{mtxelementSL2R}), $(2)$ and $(3)$.

\section{Future problems}\label{futher}
In this paper, we have calculated the heat kernel on $SL(2,\mathbb{R})$. Our next problem is Problem \ref{SL2Cheat} in Section 2 about the heat kernel on $SL(2,\mathbb{C})$. We have discussed in a general situation in Section 3-6. Thus, if $G$ is a non-compact semisimple Lie group $G$ having a multiplicity-free subgroup $K$, the heat kernel on $G$ will be calculated by a similar way.

Furthermore, there are problems about the heat kernel related to the Segal-Bargmann space. We introduce them. Let $G$ be a Lie group and $\rho_{G}$ be the heat kernel on $G$. First, we discuss problems when $G=SL(2,\mathbb{C})$.
\begin{prob}(\cite[Open problem 1]{bh3})
Let $t>0$. Prove that the set of linear sums of matrix entries of finite dimensional holomorphic representations is a dense subspace of $HL^2(SL(2,\mathbb{C}),\rho_{SL(2,\mathbb{C})}(t,g)dg)$.
\end{prob}
\begin{prob}(\cite[Open problem 2]{bh3})
Let $t,\varepsilon>0$. Prove that the space \\$HL^2(SL(2,\mathbb{C}),\rho_{SL(2,\mathbb{C})}(t+\varepsilon,g)dg)$ is a dense subspace of the space \\$HL^2(SL(2,\mathbb{C}),\rho_{SL(2,\mathbb{C})}(t,g)dg)$.
\end{prob}
These problems seems not yet solved. Generalizing them, we can consider other problems as follows.
\begin{prob}
Let $t>0$. Prove that the set of linear sums of matrix entries of finite dimensional representations is a dense subspace of $L^2(SL(2,\mathbb{R}),\rho_{SL(2,\mathbb{R})}(t,g)dg)$.
\end{prob}
\begin{prob}
Let $t,\varepsilon>0$. Prove that the space \\
$L^2(SL(2,\mathbb{R}),\rho_{SL(2,\mathbb{R})}(t+\varepsilon,g)dg)$ is a dense subspace of the space\\$L^2(SL(2,\mathbb{R}),\rho_{SL(2,\mathbb{R})}(t,g)dg)$.
\end{prob}
If we know an explicit expression of the heat kernel, we may get a hint to approach these problems.

\end{document}